\newtheorem{theorem}{Theorem}
\newtheorem{lemma}[theorem]{Lemma}
\theoremstyle{definition}
\newtheorem{remark}[theorem]{Remark}
\newtheorem{conjecture}[theorem]{Conjecture}
\author{Diana Mocanu}
\newcommand*{\inlineequation}[2][]{%
  \begingroup
    \refstepcounter{equation}%
    \ifx\\#1\\%
    \else
      \label{#1}%
    \fi
    \relpenalty=10000 %
    \binoppenalty=10000 %
    \ensuremath{%
      #2%
    }%
     ~\@eqnnum
  \endgroup
}
\title{ \bf{Asymptotic Fermat for signatures $(p,p,2)$ and $(p,p,3)$ over totally real fields} 
\noindent}
\newcommand{\rep}{\overline{\rho}}
\newcommand{\fq}{\mathfrak{q}}
\newcommand{\fP}{\mathfrak{P}}
\newcommand{\fp}{\mathfrak{p}}
\newcommand{\cN}{\mathcal{N}}
\newcommand{\cO}{\mathcal{O}}
\newcommand{\N}{\text{Norm}}
\author{Diana Mocanu}
\date{}
\begin{document}

\maketitle
\begin{abstract}
   Let $K$ be a totally real number field and consider a Fermat-type equation $Aa^p+Bb^q=Cc^r$ over $K$. We call the triple of exponents $(p,q,r)$ the \textit{signature} of the equation. We prove various results concerning the solutions to the Fermat equation with signature $(p,p,2)$ and $(p,p,3)$ using a method involving modularity, level lowering and image of inertia comparison. These generalize and extend the recent work of Işik, Kara and Özman. For example, consider $K$ a totally real field of degree $n$ with $2 \nmid h_K^+$ and $2$ inert. Moreover, suppose there is a prime $q\geq 5$ which totally ramifies in $K$ and satisfies $\gcd(n,q-1)=1$, then we know that the equation $a^p+b^p=c^2$ has no primitive, non-trivial solutions $(a,b,c) \in \cO_K^3$ with $2 | b$ for $p$ sufficiently large.
\end{abstract}    
\section{Introduction}
\subsection{Historical background}
The study of Diophantine equations is of great interest 
in Mathematics and goes back to antiquity. The most famous example of a Diophantine equation appears in \textit{Fermat’s Last Theorem}. This is the statement, asserted by Fermat in 1637 without proof, that the Diophantine equation $a^n+b^n=c^n$ has no solutions in whole numbers when $n$ is at least $3$, other than the trivial solutions which arise when $abc=0$. Andrew Wiles famously proved the Fermat's Last Theorem in 1995 in his paper "Modular elliptic curves and Fermat’s Last Theorem" \cite{45}. The proof is by contradiction employing techniques from algebraic geometry and number theory to prove a special case of the modularity theorem for elliptic curves, which together with Ribet’s level lowering theorem gives the long-waited result. Since then, number theorists extensively studied Diophantine equations using Wiles' modularity approach. Siksek gives a comprehensive survey about this method over the field of rationals in \cite{SS}.\par 
Even before Wiles announced his proof, various generalizations of Fermat’s Last Theorem had already been
considered, which are of the shape
\begin{equation} \label{intro}
    Aa^p+Bb^q=Cc^r
\end{equation}
for fixed integers $A,B$ and $C$. We call $(p,q,r)$ \textit{the signature} of the equation (\ref{intro}). A \textit{primitive} solution $(a,b,c)$ is a solution where $a,b$ and $c$ are pairwise coprime and a \textit{non-trivial} solution $(a,b,c)$ is a solution where $abc \neq 0$. \par  
In \cite{T},  Işik, Kara and Özman list all known cases where equation (\ref{intro}) has been solved over the rational integers in two tables (p.4). Table 1 contains all unconditional results for infinitely many primes. In Table 2, they give all conditional results. We highlight here one relevant family of solutions, namely $(n,n,k)$ where $k\in \{2,3 \}$. Darmon and Merel \cite{DM} and Poonen \cite{P} proved the following theorem:
\begin{theorem}[Darmon and Merel]\label{DMP}
\begin{enumerate}
    \item The equation $a^n+b^n=c^2$ has no non-trivial primitive integer solutions for $n\geq4$.
    \item The equation $a^n+b^n=c^3$ has no non-trivial primitive integer solutions for $n\geq3$.
\end{enumerate}
\end{theorem}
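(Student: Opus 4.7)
The plan is to apply the modular method to each of the two equations in turn: I would attach a Frey elliptic curve $E/\mathbb{Q}$ to a hypothetical non-trivial primitive solution, invoke modularity of elliptic curves over $\mathbb{Q}$ together with Ribet's level lowering to push $\bar\rho_{E,p}$ into a weight-$2$ newform at a fixed small level, and finally rule out every candidate newform by an image-of-inertia comparison.

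First I would reduce to a prime exponent. A solution of signature $(n,n,r)$ with $n\geq 4$ and $p\mid n$ prime yields a solution of signature $(p,p,r)$, so for $r=2$ it suffices to handle the cases $p=2$ (i.e.\ the equation $a^4+b^4=c^2$, Fermat's own infinite descent), $p=3$ (occurring when $3\mid n$, disposed of by classical descent on $X^3+Y^3=Z^2$), and $p\geq 5$; for $r=3$, only $p\geq 5$ remains after Euler's theorem for cubes. From now on assume $p\geq 5$ and a putative non-trivial primitive solution $(a,b,c)$.

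For signature $(p,p,2)$, after permuting $a$ and $b$ so that $2\mid b$, I would work with the Bennett--Skinner/Darmon--Granville Frey curve
\[
E \colon \; y^2 = x^3 + 2cx^2 + b^p x,
\]
which is semistable away from $2$ and whose conductor at $2$ can be explicitly computed as $2^5$, giving $N_E = 2^5\cdot \operatorname{rad}(ab/2)$. Modularity produces a weight-$2$ newform $f$ of that level with $\bar\rho_{E,p}\simeq \bar\rho_{f,\mathfrak p}$. Using Mazur's theorem on rational isogenies together with Momose's results on $X_0(2p)(\mathbb{Q})$ (needed because $E$ carries a rational $2$-torsion point), I would show that $\bar\rho_{E,p}$ is absolutely irreducible for $p\geq 5$; Ribet's level lowering then strips every odd prime from the level, leaving a newform in $S_2(\Gamma_0(32))^{\mathrm{new}}$, a one-dimensional space containing only the CM form attached to the elliptic curve $32\mathrm{a}1$. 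For signature $(p,p,3)$, the plan is entirely analogous with an appropriate Frey curve of the form $y^2 = x^3 + 3cx - 2b^p$ and conductor-lowered level $72$, producing only a short list of newforms to eliminate.

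The hardest step would be the final elimination: one needs to show that the mod-$p$ residual representation of such a newform $f$ cannot be isomorphic to $\bar\rho_{E,p}$. The key leverage is the image of inertia at a prime $\ell\mid abc$ of multiplicative reduction for $E$: ramification of $\bar\rho_{E,p}$ at $\ell$ is tame of order divisible by $p$, which forces $a_\ell(f) \equiv \pm(\ell+1) \pmod{\mathfrak p}$. Combining this with explicit knowledge of $a_\ell(f)$ for the short list of newforms at level $32$ (resp.\ $72$) --- for instance, the CM form at level $32$ satisfies $a_\ell(f)=0$ whenever $\ell\equiv 3\pmod 4$ --- yields a contradiction for all sufficiently large $p$, with the remaining small primes handled by direct computation. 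It is the analogue of this inertia comparison, now run over a totally real field, that will require the arithmetic hypotheses (on $h_K^+$, the splitting of $2$, and the auxiliary ramified prime) appearing in the main results of the paper.
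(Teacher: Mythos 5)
The central flaw is in your final elimination step. The ``image of inertia'' or Mazur-trick argument you propose --- showing $a_\ell(f)\equiv\pm(\ell+1)\pmod p$ for some prime $\ell\mid abc$ of multiplicative reduction and comparing with the eigenvalues of the level-$32$ CM form --- cannot possibly yield the unconditional statement of Darmon--Merel. That comparison only tells you $p\mid\ell\pm1$ for a prime $\ell$ dividing $abc$, and $\ell$ depends on the hypothetical solution; you get an asymptotic bound on $p$ in terms of the solution, not a contradiction valid for every exponent. Worse, the contradiction needs a prime $\ell\equiv3\pmod4$ dividing $abc$ (so that $a_\ell(f)=0$), and there is no reason a priori that $abc$ has such a prime factor. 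What Darmon and Merel actually do after level lowering is completely different: the unique newform at level $32$ (resp.\ $27$) corresponds to a CM elliptic curve, and one must analyze the consequences of $\overline{\rho}_{E,p}\simeq\overline{\rho}_{E_{\rm CM},p}$ using the theory of complex multiplication and Mazur-style winding quotient arguments on appropriate modular curves. The upshot of that analysis is a bound on denominators --- $j_E\in\mathbb{Z}[1/p]$ for $p>7$ --- which contradicts the fact that the Frey $j$-invariant has $p$-power denominators at odd primes dividing $ab$. The cases $p\le7$ then need separate elementary treatment (Poonen). Your proposal has no substitute for this CM/winding-quotient input, which is precisely why the result is a hard theorem and not a routine application of the Ribet--Wiles machinery.

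Two smaller points. First, the Frey curve and level you name for signature $(p,p,3)$ are wrong: the standard choice is $Y^2+3cXY+a^pY=X^3$, carrying a rational $3$-torsion point, and Ribet's theorem lowers the level to $27$, not $72$. Second, in reducing to prime exponent for part (ii), the case $n=4$ (the equation $a^4+b^4=c^3$) is not a multiple of any odd prime nor covered by Fermat's or Euler's classical descents; it must be handled separately, as Poonen does.
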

Note that the above equations, typically have infinitely many non-primitive solutions. For example, if $n$ is odd, and $a$ and $b$ are any two
integers with $a^n + b^n = c$, then 
$$ (ac)^n+(bc)^n=(c^{\frac{n+1}{2}})^2
$$
giving a rather uninteresting supply of solutions. Thus, we would only study the primitive solutions of the above equations. 
\par A naive sketch of the proof of Theorem \ref{DMP} is as follows. First note that it is enough to prove the assumption for $n=p$ an odd prime. Suppose $a,b,c\in \mathbb{Z}$ is a non-trivial, primitive solution to (i) or (ii). In each of the cases, we can associate a so-called Frey elliptic curve $E_{a,b,c}/\mathbb{Q}$ and let $\rep_{E,p}$ be its$\mod p$ Galois representation, where $E=E_{a,b,c}$. Then $\rep_{E,p}$ is irreducible by Mazur \cite{32} and modular by Wiles and Taylor \cite{45} and \cite{43}. Applying Ribet's level lowering theorem \cite{36} one gets that that $\rep_{E,p}$ arises from a weight $2$ newform
of level $32$ for (i) and level $27$ for (ii). These are closely related to the modular curves $X_0(32)$ and $X_0(27)$ which turn out to be elliptic curves with complex multiplication. Darmon and Merel prove in \cite{DM}, by using the theory of complex multiplication that this implies $j_E\in \mathbb{Z}[\frac{1}{p}]$ for  $p>7$, which gives a contradiction. The cases when $p\leq 7$ are treated in a more elementary way by Poonen \cite{P}.
\par 
Recently, important progress has been done towards generalisation of the modularity approach over larger number fields. 
In \cite{SN} Freitas and Siksek proved \textit{the asymptotic
Fermat’s Last Theorem (AFLT)} for certain totally real fields $K.$ That is, they showed that there is a constant $B_K$
such that for any prime $p>B_K$, the only solutions to the Fermat equation 
$a^p + b^p + c^p = 0$ where $a,b,c \in \cO_K$ are the trivial ones i.e. the ones satisfying $abc=0$.
Then, Deconinck \cite{De} extended the results of Freitas and Siksek \cite{SN} to
the generalized Fermat equation of the form $Aa^p+Bb^p+Cc^p=0$ where $A, B, C$ are odd integers belonging to a totally real field. Later in \cite{Se} Şengün and Siksek proved the asymptotic FLT for any number field $K$ by
assuming modularity. This result has been generalized by Kara and Özman in \cite{Ka} to the case of the generalized Fermat equation. Also, recently in \cite{Tu1} and \cite{Tu2} Țurcaș studied Fermat equation over imaginary quadratic field $\mathbb{Q}(\sqrt{-d})$ with class number one.\par 
We now present a result by Işik, Kara and Özman, proved in \cite{T} which serves as the starting point of this paper.
It gives a computable criteria of testing if the \textit{asymptotic Fermat Last Theorem} holds for certain type of solutions of the equations with signatures $(p,p,2)$.
To state it, we need the following notation:
$$S_K:=\{\mathfrak{P}\: : \mathfrak{P} \text{ is a prime of } K \text{ above } 2\},\:\:\: T_K:=\{\mathfrak{P} \in S_K : f(\mathfrak{P}/2)=1\},$$
$$W_K:=\{(a,b,c) \in \mathcal{O}_K^ 3 : a^p+b^p=c^2 \text{ with } \mathfrak{P}|b \text{ for every } \mathfrak{P} \in T_K\}$$
where $f(\mathfrak{P}/2)$ denotes the residual degree of $\mathfrak{P}$.
\begin{theorem}[Işik, Kara and Özman]\label{turkish}
Let $K$ be a totally real number field with narrow class number $h_K^+=1$.
For each $a \in K(S_K,2)$, let $L=K(\sqrt{a})$. \\(A):Suppose that for every solution $(\lambda,\mu)$ to the $S_K$-unit equation
$$ \lambda+\mu=1, \: \lambda, \mu \in \mathcal{O}_{S_K}^*
$$
there is some $\mathfrak{P} \in T_K$ that satisfies max\{$|v_{\mathfrak{P}}(\lambda)|,|v_{\mathfrak{P}}(\mu)|$\}$\leq 4v_{\mathfrak{P}}(2)$.\\
(B):Suppose also that for each $L$, for every solution $(\lambda,\mu)$ of the $S_L$-unit equation $ \lambda+\mu=1,\: \lambda, \mu \in \mathcal{O}_{S_L}^*$, there is some $\mathfrak{P}'\in T_L$ that satisfies max\{$|v_{\mathfrak{P'}}(\lambda)|,|v_{\mathfrak{P'}}(\mu)|$\}$\leq 4v_{\mathfrak{P'}}(2)$.\\
Then, there is a constant $B_K$  (depending only on $K$) such that for each $p>B_K$, the equation $a^p+b^p=c^2$ has no primitive, non-trivial solutions with $(a,b,c) \in W_K$ (i.e. the asymptotic Fermat holds for $W_K$).
\end{theorem}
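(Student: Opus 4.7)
The plan is to carry out a Darmon--Merel style argument adapted to the totally real setting. Suppose $(a,b,c)\in W_K$ is a primitive non-trivial solution to $a^p+b^p=c^2$. I would first attach the standard Frey curve $E=E_{a,b,c}/K$ of shape $Y^2=X(X-a^p)(X+b^p)$, suitably normalised at primes above $2$. The hypothesis that $\fP\mid b$ for every $\fP\in T_K$ forces the conductor exponent of $E$ at the degree-one primes over $2$ into a short explicit list, mimicking the role of $2\mid b$ in the classical case; the hypothesis $h_K^+=1$ then lets us identify Hilbert newforms of trivial character with their associated abelian varieties in the same clean way as over $\mathbb{Q}$.

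For $p$ sufficiently large, three ingredients are standard. First, $E$ is modular, by the work of Freitas--Le Hung--Siksek and its extensions to totally real fields (with a finite list of possibly non-modular $j$-invariants, which can be dealt with separately). Second, $\rep_{E,p}$ is absolutely irreducible, by a uniform boundedness result over totally real fields in the spirit of David's refinement of Mazur. Third, Fujiwara--Jarvis level lowering for Hilbert modular forms produces a Hilbert newform $\ff$ of parallel weight $2$ and trivial character whose level $\cN$ is supported only at primes above $2$, together with an isomorphism $\rep_{E,p}\simeq \rep_{\ff,\fp}$ for some prime $\fp\mid p$ of the Hecke field of $\ff$.

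The core of the proof is then the \emph{image of inertia} comparison. Running through the short list of possible $\cN$ allowed by the $W_K$ condition, and invoking Eichler--Shimura, I would reduce to the case in which $\ff$ corresponds to an elliptic curve $E'/K$ with good reduction outside $S_K$. Such curves are classically parametrised, via the Legendre form, by solutions to the $S_K$-unit equation $\lambda+\mu=1$ in $\cO_{S_K}^{\ast}$. Assumption (A) supplies, for each such $\lambda$, a prime $\fP\in T_K$ with $\max\{|v_\fP(\lambda)|,|v_\fP(\mu)|\}\le 4v_\fP(2)$, which translates into an upper bound on $v_\fP(j_{E'})$. Comparing this with the Tate uniformisation of $E$ at $\fP$, where $v_\fP(j_E)$ grows linearly in $p$ through the $p$-th power in $b$, forces a contradiction once $p$ exceeds a constant depending only on $K$.

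When $\ff$ has Hecke field strictly larger than $\mathbb{Q}$, or corresponds to an elliptic curve only after a quadratic base change, I would perform a quadratic twist by some $a\in K(S_K,2)$ and redo the previous argument over $L=K(\sqrt{a})$; this is exactly what assumption (B) is designed to feed into, with the $T_L$-prime $\fP'$ playing the role of $\fP$ above. I expect the main obstacle to be the bookkeeping of inertia at primes above $2$: one has to show that the combination of the $W_K$ constraint with the explicit $4v_\fP(2)$ threshold in (A) and (B) is strong enough to rule out \emph{every} shape of $\rep_{\ff,\fp}\rvert_{I_\fP}$ that survives level lowering, for every candidate $\cN$. This is where the narrow class number assumption and the degree-one restriction in $T_K$ do the real work.
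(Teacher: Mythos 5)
This theorem is cited from İşik--Kara--Özman's paper \cite{T} rather than proved in the present paper; the paper's own contribution is the generalization Theorem~\ref{main1}, whose proof follows the same modular skeleton but with a different endgame. Your proposal captures the correct overall framework (Frey curve, modularity, irreducibility, Fujiwara--Jarvis level lowering, Eichler--Shimura, then image-of-inertia comparison against $S$-unit solutions), but two of your concrete choices are wrong in a way that breaks the argument.

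First, the Frey curve. The curve $Y^2 = X(X-a^p)(X+b^p)$ is the Frey curve for signature $(p,p,p)$; its discriminant is $16\,a^{2p}b^{2p}(a^p+b^p)^2 = 16\,a^{2p}b^{2p}c^4$, and the factor $c^4$ is not a $p$-th power, so level lowering does \emph{not} strip away the primes dividing $c$. The correct Frey curve for $a^p+b^p=c^2$ is the one used in Section~\ref{section3}, $E:Y^2=X^3+4cX^2+4a^pX$, with $\Delta_E=2^{12}(a^2b)^p$: the only rational $2$-torsion point is $(0,0)$, not full $2$-torsion, and only $a$ and $b$ appear in the discriminant. This is exactly why the proof after level lowering lands on curves with \emph{a} rational $2$-torsion point, rather than curves in Legendre form.

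Second, and related, your interpretation of hypothesis~(B) is off. You treat (B) as a contingency for when $\ff$ has Hecke field bigger than $\mathbb{Q}$ or ``corresponds to an elliptic curve only after a quadratic base change,'' and you also speak of performing a quadratic twist. The Hecke-field reduction is a separate, standard Mazur-type step (see the proof of Theorem~\ref{LL}) and needs no quadratic extension; and a quadratic twist is not the operation in play. The actual role of $L=K(\sqrt{a})$ with $a\in K(S_K,2)$ is to pass to an extension over which the level-lowered curve $E'$ acquires \emph{full} $2$-torsion, so that the Legendre/$\lambda$-line parametrisation (which needs all of $E'[2]$ rational) becomes available and the $S_L$-unit equation $\lambda+\mu=1$ can be compared against valuations of $j_{E'}$. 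Hypothesis~(A) covers the case where $E'$ already has full $2$-torsion over $K$; hypothesis~(B) covers the case where it does not. In contrast, the present paper (Theorem~\ref{main1} and Lemmas~\ref{torsion}--\ref{mainelliptic}) parametrises curves by a single $2$-torsion \emph{point} rather than full $2$-torsion, arriving at the single equation $\alpha+\beta=\gamma^2$ with the weaker bound $6v_{\tilde{\fP}}(2)$ and no quadratic base change at all --- which is precisely what lets the paper replace the two hypotheses (A),(B) and $h_K^+=1$ by a single hypothesis and $Cl_{S_K}(K)[2]=\{1\}$. If you repair the Frey curve and reorganise the $2$-torsion dichotomy as above, your outline matches the İşik--Kara--Özman approach; as written, the argument would not close.
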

\subsection{Our results}
We start by using the methods pioneered by Freitas and Siksek in \cite{SN} involving modularity, level lowering and image of inertia comparison to generalize Işik, Kara and Özman's Theorem \ref{turkish}. More precisely, we relax the assumption on the class group from $h_K^+=1$ to $Cl_{S_K}(K)[2]=\{1\}.$ We use $Cl_S(K)$ to mean $Cl(K)/\langle{[\mathfrak{P}]}\rangle_{\mathfrak{P}\in S}$ for $S$ a finite set of primes of $K$ and consequently, $Cl_S(K)[n]$ denotes its $n$-torsion points. Note that when all $\fP \in S$ are principal,  $Cl_S(K)$ is the usual $Cl(K)$, and hence we will drop the $S$ in the notation. Moreover, in this case, $Cl(K)[p]=\{1\}$ is equivalent to $p \nmid h_K$, for $p$ prime.

Our main theorem regarding the Asymptotic Fermat Last Theorem for signature $(p,p,2)$ reads as follows:
\begin{theorem}[Main Theorem for $(p,p,2)$] \label{main1}
Let $K$ be a totally real number field with $Cl_{S_K}(K)[2]=\{1\}$ where $S_K:=\{\mathfrak{P}\: : \mathfrak{P} \text{ is a prime of } K \text{ above } 2\}$.
Suppose that there exists some distinguished prime $\tilde{\mathfrak{P}} \in S_K$, such that every solution $(\alpha,\beta,\gamma) \in \mathcal{O}_{S_K}^* \times \mathcal{O}_{S_K}^* \times \mathcal{O}_{S_K}$ to the equation
$$ \alpha+\beta=\gamma^2
$$
satisfies $|v_{\tilde{\mathfrak{P}}}(\frac{\alpha}{\beta})|\leq 6v_{\tilde{\mathfrak{P}}}(2)$. Then, there is a constant $B_K$ (depending only on $K$) such that for each rational prime $p>B_K$, the equation $a^p+b^p=c^2$ has no primitive, non-trivial solutions $(a,b,c) \in \mathcal{O}_K^3$ with $\tilde{\mathfrak{P}}|b$.
\end{theorem}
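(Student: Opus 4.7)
My plan is to follow the modularity, level-lowering and image-of-inertia comparison strategy pioneered by Freitas--Siksek \cite{SN} and adapted to signature $(p,p,2)$ in \cite{T}, while replacing the narrow class number hypothesis $h_K^+=1$ used there by the weaker condition $Cl_{S_K}(K)[2]=\{1\}$. Assuming for contradiction a primitive, non-trivial solution $(a,b,c) \in \cO_K^3$ with $\tilde{\fP} \mid b$ for arbitrarily large $p$, I attach the Bennett--Skinner--style Frey curve
\[
E = E_{a,b,c}: \; Y^2 = X^3 + 2cX^2 + a^p X,
\]
whose minimal discriminant is supported on $S_K$ and on primes dividing $ab$, and whose conductor I compute by Tate's algorithm in each prime of $S_K$.

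The first block of steps shows that $\rep_{E,p}$ is absolutely irreducible for $p$ large (by standard Frey-curve arguments over totally real fields combined with the hypothesis $\tilde{\fP}\mid b$), modular (by modularity lifting over totally real fields, after possibly enlarging the lower bound on $p$), and amenable to Fujiwara--Jarvis--Rajaei level lowering. The outcome is that $\rep_{E,p}$ arises from a Hilbert eigenform $\ff$ of parallel weight $2$, trivial character, and level $\cN$ supported only on $S_K$. The class group hypothesis enters here: factoring $c^2 = a^p + b^p$ into coprime ideals of the form $(a+\zeta b)$ and extracting $p$-th roots up to ideal classes requires exactly $Cl_{S_K}(K)[2]=\{1\}$ in order to pin down the resulting scalar $S_K$-unit data without enlarging the base field.

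To eliminate the remaining newforms $\ff$, I would argue that for $p$ large $\ff$ must have rational Hecke eigenvalues, hence corresponds to an elliptic curve $E'/K$ of conductor $\cN$ (or to a fake elliptic curve, which is excluded by a separate Galois-theoretic argument). Attached to the $2$-division data of $E'$, together with the structure provided by $Cl_{S_K}(K)[2]=\{1\}$, one produces a triple $(\alpha,\beta,\gamma) \in \cO_{S_K}^* \times \cO_{S_K}^* \times \cO_{S_K}$ satisfying $\alpha+\beta=\gamma^2$. The hypothesis then forces $|v_{\tilde{\fP}}(\alpha/\beta)|\le 6 v_{\tilde{\fP}}(2)$, and a short calculation with Tate's formulae identifies $v_{\tilde{\fP}}(\alpha/\beta)$ with a controlled linear combination of $v_{\tilde{\fP}}(j_{E'})$ and $v_{\tilde{\fP}}(2)$. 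On the other hand, the congruence $\rep_{E,p}\sim\rep_{E',p}$ combined with the fact that $E$ has multiplicative reduction at $\tilde{\fP}$ (so that $v_{\tilde{\fP}}(j_E)$ is a negative multiple of $p$) forces, by comparing images of inertia, that $p\mid v_{\tilde{\fP}}(j_{E'})$. Combined with the bounded valuation this yields $p\le C_K$ and hence the asserted $B_K$.

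The main obstacle I anticipate lies in this elimination step, specifically in constructing the triple $(\alpha,\beta,\gamma)$ canonically from $E'$ while simultaneously handling the potentially multiplicative and the potentially good reduction cases at $\tilde{\fP}$; the coefficient $6$ in $|v_{\tilde{\fP}}(\alpha/\beta)|\le 6v_{\tilde{\fP}}(2)$ is calibrated precisely to absorb both. The weakened class-group assumption further forces a careful descent modulo squares in the $S_K$-class group, which complicates the principal-ideal bookkeeping relative to \cite{T} and is where the hypothesis $Cl_{S_K}(K)[2]=\{1\}$ is used a second time. The modularity and irreducibility inputs are invoked only for $p$ beyond an effective but unspecified lower bound, which is exactly what the asymptotic nature of the conclusion allows.
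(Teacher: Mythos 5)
Your proposal correctly names the main ingredients (Frey curve, modularity, irreducibility, Fujiwara--Jarvis--Rajaei level lowering, Eichler--Shimura, image-of-inertia comparison, translation into an $S_K$-unit equation), and the broad architecture matches the paper. However, there are two substantive errors in how you connect them, and these are genuine gaps rather than cosmetic differences.

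First, the role of $Cl_{S_K}(K)[2]=\{1\}$. You claim it is used to ``factor $c^2=a^p+b^p$ into coprime ideals of the form $(a+\zeta b)$ and extract $p$-th roots up to ideal classes.'' No such cyclotomic factorization appears in the modular method; that is a Kummer-style argument incompatible with the Frey-curve route, and in any case $a^p+b^p$ factors over $\mathbb{Q}(\zeta_p)$, not $K$. The hypothesis is in fact used only once, in the \emph{descent for the eliminated curve} $E'$ obtained from Eichler--Shimura: $E'$ has a $K$-rational $2$-torsion point and good reduction outside $S_K$; writing $E': Y^2=X^3+a'X^2+b'X$ and $\lambda=a'^2/b'$, one shows $(\lambda)\mathcal{O}_K=I^2J$ with $J$ an $S_K$-ideal, and then $Cl_{S_K}(K)[2]=\{1\}$ forces $[I]\in\langle[\mathfrak{P}]\rangle_{\mathfrak{P}\in S_K}$, so $I=\gamma\tilde I$ with $\tilde I$ an $S_K$-ideal. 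This yields $\lambda=u\gamma^2$ with $u\in\mathcal{O}_{S_K}^*$, and dividing $\mu+4=\lambda$ (where $\mu=\lambda-4$) by $u$ produces the triple $(\alpha,\beta,\gamma)=(\mu/u,\,4/u,\,\gamma)$ feeding into the hypothesis.

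Second, the image-of-inertia step and final contradiction. You assert that $v_{\tilde{\mathfrak{P}}}(j_E)$ is ``a negative multiple of $p$'' and that the inertia comparison forces $p\mid v_{\tilde{\mathfrak{P}}}(j_{E'})$, with the contradiction coming from a bounded valuation forcing $p\le C_K$. This is backwards. One has $v_{\tilde{\mathfrak{P}}}(j_E)=6v_{\tilde{\mathfrak{P}}}(2)-p\,v_{\tilde{\mathfrak{P}}}(b)$, which is negative and \emph{not} divisible by $p$ for $p$ large; were it a multiple of $p$, Lemma \ref{lemma} would give $p\nmid\#\rep_{E,p}(I_{\tilde{\mathfrak{P}}})$ and both the Eichler--Shimura application and the transfer of reduction type would fail. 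The correct chain is: $p\mid\#\rep_{E,p}(I_{\tilde{\mathfrak{P}}})$ transfers via $\rep_{E,p}\sim\rep_{E',p}$ to $p\mid\#\rep_{E',p}(I_{\tilde{\mathfrak{P}}})$, which by Lemma \ref{lemma} gives $v_{\tilde{\mathfrak{P}}}(j_{E'})<0$ (and $p\nmid v_{\tilde{\mathfrak{P}}}(j_{E'})$). The contradiction is then immediate: the bound $|v_{\tilde{\mathfrak{P}}}(\alpha/\beta)|\le 6v_{\tilde{\mathfrak{P}}}(2)$ rewrites as $-4v_{\tilde{\mathfrak{P}}}(2)\le v_{\tilde{\mathfrak{P}}}(\mu)\le 8v_{\tilde{\mathfrak{P}}}(2)$, and a short case analysis on $v_{\tilde{\mathfrak{P}}}(j_{E'})=8v_{\tilde{\mathfrak{P}}}(2)+3v_{\tilde{\mathfrak{P}}}(\mu+1)-v_{\tilde{\mathfrak{P}}}(\mu)$ forces $v_{\tilde{\mathfrak{P}}}(j_{E'})\ge 0$, contradicting $v_{\tilde{\mathfrak{P}}}(j_{E'})<0$. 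There is no ``$p\le C_K$ from a bounded valuation'' step; the contradiction is a sign contradiction on $v_{\tilde{\mathfrak{P}}}(j_{E'})$. As written, your elimination step would not go through.
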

\begin{remark}
By Theorem \ref{equation} the equation \[\alpha+\beta=\gamma^2, \qquad(\alpha, \beta, \gamma) \in \mathcal{O}_{S_K}^* \times \mathcal{O}_{S_K}^* \times \mathcal{O}_{S_K}\] has finitely many solutions up to scaling by a square in $\mathcal{O}_{S_K}^*$, and these are effectively computable.
Hence the criteria in Theorem \ref{main1} is testable in finite time.
\end{remark}
Imposing local constraints, we get that for a totally real number field, in which $2$ is inert, the following holds:
\begin{theorem}\label{2inert}
Let K be a totally real number field with $2 \nmid h_K^+$ in which $2$ is inert. Let $\mathfrak{P}$ be the only prime above $2$, and hence $S_K = \{ \mathfrak{P} \}$.
Suppose that every solution $(\alpha, \gamma) \in \mathcal{O}^*_{S_K}\times \cO_{S_K}$ with $v_{\mathfrak{P}}(\alpha)\geq 0$ to the equation
\begin{equation}\label{2ineq}
\alpha + 1 = \gamma^2
\end{equation}
satisfies $v_{\fP}(\alpha)\leq 6$.
Then, there is a constant $B_K$ (depending only on $K$) such that for each rational prime $p>B_K$, the equation $a^p+b^p=c^2$ has no primitive, non-trivial solutions $(a,b,c)\in \cO_K^3$ with $2 | b$.
\end{theorem}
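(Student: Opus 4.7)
The plan is to derive Theorem \ref{2inert} from Theorem \ref{main1} by checking that, under its hypotheses, both of Theorem \ref{main1}'s conditions hold with the distinguished prime $\tilde{\mathfrak{P}} := \mathfrak{P}$.

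The class-group condition is straightforward. Since $2$ is inert, $\mathfrak{P} = (2)$ is principal, so $\mathrm{Cl}_{S_K}(K) = \mathrm{Cl}(K)$. The divisibility $h_K \mid h_K^+$ together with $2 \nmid h_K^+$ yields $2 \nmid h_K$, hence $\mathrm{Cl}(K)[2] = \{1\}$.

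For the $S$-equation condition, I take an arbitrary $(\alpha, \beta, \gamma) \in \mathcal{O}_{S_K}^* \times \mathcal{O}_{S_K}^* \times \mathcal{O}_{S_K}$ with $\alpha + \beta = \gamma^2$ and aim to show $|v_{\mathfrak{P}}(\alpha/\beta)| \leq 6 v_{\mathfrak{P}}(2) = 6$. By the symmetry $\alpha \leftrightarrow \beta$ I may assume $v_{\mathfrak{P}}(\alpha) \geq v_{\mathfrak{P}}(\beta)$; the case of equality is trivial. Otherwise $v_{\mathfrak{P}}(\gamma^2) = v_{\mathfrak{P}}(\beta)$ is even, and scaling $(\alpha, \beta, \gamma) \mapsto (\alpha/4^m, \beta/4^m, \gamma/2^m)$ with $m = v_{\mathfrak{P}}(\beta)/2$ (using that $2$ is a uniformizer of $\mathfrak{P}$) reduces to $v_{\mathfrak{P}}(\beta) = 0$, i.e., $\beta \in \mathcal{O}_K^*$. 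Suppose for contradiction $v_{\mathfrak{P}}(\alpha) \geq 7$. Then $\gamma^2 \equiv \beta \pmod{\mathfrak{P}^3}$, and since $v_{\mathfrak{P}}(2) = 1$, Hensel's lemma applied to $X^2 - \beta$ in the completion $\mathcal{O}_{\mathfrak{P}}$ produces $\beta \in (\mathcal{O}_{\mathfrak{P}}^*)^2$.

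The main obstacle is promoting this local squarehood at $\mathfrak{P}$ to global squarehood in $\mathcal{O}_K^*$. Here one uses that $2 \nmid h_K^+$ forces $h_K^+ = h_K$ (as the quotient $h_K^+/h_K$ divides $2^n$ and is odd), and hence $\mathcal{O}_K^{*,+} = (\mathcal{O}_K^*)^2$: a unit is a global square precisely when it is totally positive. Combined with the residue-field structure $\kappa_{\mathfrak{P}} = \mathbb{F}_{2^n}$ forced by $2$ being inert (in which every element is automatically a square since Frobenius is bijective), I expect one can show that the natural map $\mathcal{O}_K^*/(\mathcal{O}_K^*)^2 \hookrightarrow \mathcal{O}_{\mathfrak{P}}^*/(\mathcal{O}_{\mathfrak{P}}^*)^2$ is injective, so that $\beta \in (\mathcal{O}_{\mathfrak{P}}^*)^2 \cap \mathcal{O}_K^*$ already gives $\beta = t^2$ for some $t \in \mathcal{O}_K^*$. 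Granting this, the scaling $(\alpha, \beta, \gamma) \mapsto (\alpha/t^2, 1, \gamma/t)$ yields $\alpha' + 1 = (\gamma')^2$ with $\alpha' \in \mathcal{O}_{S_K}^*$, $\gamma' \in \mathcal{O}_{S_K}$ and $v_{\mathfrak{P}}(\alpha') = v_{\mathfrak{P}}(\alpha) \geq 7$, contradicting the hypothesis of Theorem \ref{2inert}. Thus $v_{\mathfrak{P}}(\alpha) \leq 6$, and the conclusion follows from Theorem \ref{main1}.
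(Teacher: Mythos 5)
Your overall skeleton is the same as the paper's: deduce the theorem from Theorem \ref{main1} with $\tilde{\mathfrak{P}}=\mathfrak{P}$, note that $2\nmid h_K^+$ kills $Cl_{S_K}(K)[2]$, and then verify the valuation bound $|v_{\fP}(\alpha/\beta)|\le 6$ for the equation $\alpha+\beta=\gamma^2$. Your class-group observation, the scaling/WLOG reduction to $v_{\fP}(\beta)=0$ (so $\beta\in\cO_K^*$), and the Hensel step showing $\beta\in(\mathcal{O}_{\fP}^*)^2$ when $v_{\fP}(\alpha)\ge 7$ are all fine. The problem is the step you yourself flag: promoting local squarehood at $\fP$ to $\beta\in(\cO_K^*)^2$. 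The mechanism you propose does not work. Being a square in the $2$-adic completion $K_{\fP}$ carries no information about the signs of $\beta$ at the real places, so the fact that $2\nmid h_K^+$ forces $\mathcal{O}_K^{*,+}=(\cO_K^*)^2$ cannot be brought to bear: you have no way to conclude that $\beta$ is totally positive. The residue-field remark is likewise vacuous: since squaring is a bijection on $\mathbb{F}_{2^n}$, \emph{every} unit is a square modulo $\fP$, whereas $(\mathcal{O}_{\fP}^*)^2$ has index $2^{n+1}$ in $\mathcal{O}_{\fP}^*$, so ``square mod $\fP$'' distinguishes nothing. Thus the claimed injectivity of $\cO_K^*/(\cO_K^*)^2\to\mathcal{O}_{\fP}^*/(\mathcal{O}_{\fP}^*)^2$, which is the crux of the whole argument, is left without proof, and the route sketched for it would fail.

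The statement you need is true under $2\nmid h_K^+$, but its proof is a ramification/class-field argument, and it is exactly what the paper's proof supplies. If $\beta\in\cO_K^*$ were not a square in $K$, then $L=K(\sqrt{\beta})$ is a quadratic extension whose relative discriminant divides $(4\beta)=(4)$, so only $\fP$ (and infinite places) could ramify; the congruence $\gamma^2\equiv\beta \pmod{\fP^{\ge 3}}$ forces $L/K$ to be unramified at $\fP$ as well --- the paper sees this concretely by checking that $\delta=(\gamma+\sqrt{\beta})/2$ is integral with minimal polynomial $X^2-\gamma X+(\gamma^2-\beta)/4$ of unit discriminant $\beta$ (your Hensel argument, which even makes $\fP$ split, does the same job). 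Hence $L$ would be a quadratic extension unramified at all finite places, i.e. contained in the narrow Hilbert class field, contradicting $2\nmid h_K^+$. With this replacement your argument closes: either $\beta$ is a nonsquare and $v_{\fP}(\alpha)\ge 7$ is impossible outright (the paper's Case (2)), or $\beta=t^2$ and your final rescaling contradicts the hypothesis on the equation $\alpha'+1=\gamma'^2$ (the paper's Case (3)). So the gap is not in the architecture but in the justification of the local-to-global squarehood step, where you must argue via an everywhere-finitely-unramified quadratic extension rather than via total positivity.
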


More concretely, for quadratic totally real number fields $K$, Theorem \ref{2inert} becomes:

\begin{theorem}\label{2quad}
Let $d>5$ be a rational prime satisfying $d\equiv 5 \mod{8}$. Write $K=\mathbb{Q}(\sqrt{d})$.
Then, there is a constant $B_K$  (depending only on $K$) such that for each rational prime $p>B_K$, the equation $a^p+b^p=c^2$ has no primitive, non-trivial solutions $(a,b,c) \in \cO_K^3$ with $2 | b$.
\end{theorem}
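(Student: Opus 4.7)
The plan is to apply Theorem~\ref{2inert}, reducing the problem to verifying its three hypotheses for $K=\mathbb{Q}(\sqrt{d})$: that $2 \nmid h_K^+$, that $2$ is inert in $K$, and that every solution $(\alpha,\gamma) \in \cO_{S_K}^* \times \cO_{S_K}$ of $\alpha+1 = \gamma^2$ with $v_{\fP}(\alpha) \geq 0$ satisfies $v_{\fP}(\alpha) \leq 6$. For the inertness of $2$: since $d \equiv 5 \pmod 8$ implies $d \equiv 1 \pmod 4$, we have $\cO_K = \mathbb{Z}[\theta]$ for $\theta = (1+\sqrt{d})/2$, with minimal polynomial $x^2 - x - (d-1)/4$; using that $(d-1)/4$ is odd, this polynomial reduces modulo $2$ to $x^2+x+1 \in \mathbb{F}_2[x]$, which is irreducible, so $\fP = (2)$ is inert with residue field $\mathbb{F}_4$. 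For the class number: since $d$ is an odd prime with $d \equiv 1 \pmod 4$, the discriminant of $K$ equals $d$ and only the single prime $d$ ramifies in $K/\mathbb{Q}$; classical genus theory then gives that the $2$-rank of the narrow class group equals $t - 1 = 0$, so $h_K^+$ is odd.

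The bulk of the work is the third hypothesis. Given such a solution, $v_{\fP}(\gamma) \geq 0$ is forced (otherwise $v_{\fP}(\alpha) = 2 v_{\fP}(\gamma) < 0$), so $\gamma \in \cO_K$; if $v_{\fP}(\gamma) > 0$ or $\gamma \not\equiv 1 \pmod{\fP}$ then $v_{\fP}(\alpha) = 0 \leq 6$ and there is nothing to prove. Otherwise $\gamma \equiv 1 \pmod{\fP}$, as $1$ is the only square root of $1$ in $\mathbb{F}_4$, so writing $\gamma = 1+2\delta$ with $\delta \in \cO_K$ and expanding $\gamma^2-1 = 4\delta(\delta+1) = 2^n u$ with $u \in \cO_K^*$ gives $\delta(\delta+1) = 2^{n-2} u$ and forces $n \geq 2$. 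Since $\delta$ and $\delta+1$ are coprime, the ideal identity $(\delta)(\delta+1) = \fP^{n-2}$ forces one of the two factors to be a unit; as $\fP = (2)$ is principal, we obtain (after possibly replacing $\gamma$ by $-\gamma$) that $\delta \in \cO_K^*$ and $\delta + 1 = 2^{n-2} w$ for some $w \in \cO_K^*$.

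Bounding $n - 2 \leq 4$ is the hardest step. Taking norms yields $|N_{K/\mathbb{Q}}(\delta + 1)| = 4^{n-2}$, and writing $\delta = \pm \epsilon^k$ for $\epsilon$ the fundamental unit of $\cO_K$ (which has norm $-1$ because $d$ is prime with $d \equiv 1 \pmod 4$), this reduces to the Diophantine condition $|(-1)^k \pm \mathrm{Tr}(\epsilon^k) + 1| = 4^{n-2}$ on the trace of a power of the fundamental unit. I expect the remaining argument to combine Theorem~\ref{equation} (finiteness of the $S_K$-unit equation $x+y=1$ in $\cO_{S_K}^*$, which absorbs the dependence on $d$ into the constant $B_K$) with a local analysis in $K_{\fP} \cong \mathbb{Q}_2(\sqrt{5})$ uniform in $d$, since every $d \equiv 5 \pmod 8$ completes to the same unramified quadratic extension of $\mathbb{Q}_2$: the image of the rank-$1$ group $\cO_K^*/\{\pm 1\}$ inside the rank-$2$ $2$-adic filtration of principal units occupies a thin subgroup, and coupled with the global norm restriction this should rule out configurations with $n-2 \geq 5$. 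The restriction $d > 5$ presumably excludes sporadic small solutions arising from the golden-ratio unit in $\mathbb{Q}(\sqrt{5})$.
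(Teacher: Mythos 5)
Your verification of the first two hypotheses of Theorem~\ref{2inert} — that $2$ is inert (via the reduction of $x^2-x-(d-1)/4$ modulo $2$) and that $2\nmid h_K^+$ (via genus theory, since $d$ is a prime with $d\equiv1\pmod 4$ so exactly one prime ramifies) — is correct and gives more detail than the paper, which just cites Lemmermeyer for the class number fact and asserts the inertness.

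The gap is in the third hypothesis, and it is genuine. You correctly reduce to the case $\gamma\in\cO_K$, $\gamma\equiv1\pmod\fP$, write $\gamma=1+2\delta$, and observe that $\delta(\delta+1)$ is an $S_K$-unit with $\delta,\delta+1$ coprime, so that (after a sign swap) $\delta\in\cO_K^*$ and $\delta+1=2^{n-2}w$ with $w\in\cO_K^*$. This is, up to the substitution $x=(\gamma+1)/2$, $y=(1-\gamma)/2$, exactly the same $S_K$-unit equation $x+y=1$ that the paper arrives at. But at this point you need a concrete bound $n-2\leq 4$, and you do not prove one: you take norms to get $|N_{K/\mathbb{Q}}(\delta+1)|=4^{n-2}$, convert to a trace condition on $\epsilon^k$, and then explicitly say ``I expect the remaining argument to combine Theorem~\ref{equation} \ldots with a local analysis in $K_\fP\cong\mathbb{Q}_2(\sqrt5)$ uniform in $d$.'' That is not an argument. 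Theorem~\ref{equation} gives finiteness of solutions for each fixed $K$, but says nothing about their $\fP$-adic valuations, and the hoped-for ``thin subgroup'' local argument would need to be carried out; it is not at all clear a priori that it closes. The paper avoids this entirely by citing a specific result of Freitas--Siksek (\cite{SN}, p.~15): for $K=\mathbb{Q}(\sqrt d)$ with $d\equiv5\pmod 8$, $d>5$, and $S_K=\{\fP\}$, the only solutions of the $S_K$-unit equation $x+y=1$ are the three irrelevant ones $(-1,2),(1/2,1/2),(2,-1)$. This yields $\alpha\in\{-1,8\}$, hence $v_\fP(\alpha)\in\{0,3\}\leq 6$ immediately, and also explains where the restriction $d>5$ is actually used. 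Your proof needs to either invoke that classification or supply a genuine replacement for it.
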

More generally, by employing additional local information, the following holds.
\begin{theorem}\label{2local}
Let $K$ be a totally real field of degree $n$, and let $q\geq 5$ be a rational prime. Suppose 
\begin{enumerate}
    \item $2 \nmid h_K^+$,
    \item $\gcd(n,q-1)=1$,
    \item $2$ is inert in $K$,
    \item $q$ totally ramifies in $K$.
\end{enumerate}
Then, there is a constant $B_K$  (depending only on $K$) such that for each rational prime $p>B_K$, the equation $a^p+b^p=c^2$ has no primitive, non-trivial solutions $(a,b,c) \in \cO_K^3$ with $2 | b$.
\end{theorem}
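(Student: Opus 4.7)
The plan is to apply Theorem~\ref{2inert} after verifying its unit-equation hypothesis under conditions~(1)--(4). Conditions~(1) and~(3) give exactly the setup of Theorem~\ref{2inert}, so $S_K=\{\mathfrak{P}\}$ with residue field $\mathbb{F}_{2^n}$ and $v_\mathfrak{P}(2)=1$. What remains is to show, using conditions~(2) and~(4), that every $(\alpha,\gamma)\in\mathcal{O}_{S_K}^*\times\mathcal{O}_{S_K}$ with $v_\mathfrak{P}(\alpha)\geq 0$ and $\alpha+1=\gamma^2$ satisfies $v_\mathfrak{P}(\alpha)\leq 6$.

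I proceed by contradiction, assuming $v_\mathfrak{P}(\alpha)\geq 7$. Then $v_\mathfrak{P}(\gamma)=0$ forces $\gamma\in\mathcal{O}_K$, and in the factorization $\alpha=(\gamma-1)(\gamma+1)$ both factors lie in $\mathcal{O}_K\cap\mathcal{O}_{S_K}^*$, hence are supported only at $\mathfrak{P}$, with $\mathfrak{P}$-valuations summing to at least $7$. The ultrametric inequality applied to $(\gamma+1)-(\gamma-1)=2$, combined with $v_\mathfrak{P}(2)=1$, rules out equality of these valuations, so one is exactly $1$ and the other is at least $6$. After replacing $\gamma$ by $-\gamma$ if necessary, let $\pi:=\gamma-1$, so $\mathfrak{P}=(\pi)$ (in particular $\mathfrak{P}$ is principal, else we are done immediately), and write $\gamma+1=\eta\pi^b$ and $2=\pi\mu$ with $\eta,\mu\in\mathcal{O}_K^*$ and $b\geq 6$. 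Solving yields $\mu+1=\eta\pi^{b-1}$, so $\mu\equiv -1\pmod{\mathfrak{P}^{b-1}}$ with $b-1\geq 5$.

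To bring in the prime $q$, let $\mathfrak{q}$ be the unique prime of $K$ above $q$; by~(4), $K_\mathfrak{q}/\mathbb{Q}_q$ is totally ramified of degree~$n$ with residue field $\mathbb{F}_q$. For $u\in\mathcal{O}_K^*$ with residue $\bar u\in\mathbb{F}_q^*$, the local--global norm identity combined with the totally ramified structure yields $\pm 1=N_{K/\mathbb{Q}}(u)=N_{K_\mathfrak{q}/\mathbb{Q}_q}(u)\equiv\bar u^n\pmod q$, and condition~(2) then forces $\bar u\in\{\pm 1\}$. Applied to $\mu,\eta$ and to $\pi$ (with $N_{K/\mathbb{Q}}(\pi)=\pm 2^n$), this gives $\bar\mu,\bar\eta\in\{\pm 1\}$ and $\bar\pi\in\{\pm 2\}$. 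Reducing $\mu+1=\eta\pi^{b-1}$ modulo $\mathfrak{q}$ rules out $\bar\mu=-1$ (else $\bar\pi=0$), so $\bar\mu=1$ and $\bar\eta\bar\pi^{b-1}\equiv 2\pmod q$. The main obstacle I foresee is converting this residue information together with the strong $\mathfrak{P}$-adic constraint $\mu\equiv -1\pmod{\mathfrak{P}^{b-1}}$ into an outright contradiction uniform in $b\geq 6$; I expect this to require a secondary norm/inertia argument on an auxiliary global unit (for instance $\mu-1$ after suitable rescaling, or $-\mu$) or a refined $\mathfrak{P}$-adic expansion of $\gamma^2=\alpha+1$ in the completion $K_\mathfrak{P}$. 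This is the genuinely non-trivial part of the proof; the remainder is a mechanical reduction to Theorem~\ref{2inert}.
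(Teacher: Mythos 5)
Your reduction to Theorem~\ref{2inert} is correct and matches the paper, and your subsequent factorization $\alpha=(\gamma-1)(\gamma+1)$, ultrametric split of valuations, and use of the norm $N_{K/\mathbb{Q}}$ over the totally ramified prime $q$ to constrain residues modulo $\fq$ (yielding $\bar\mu,\bar\eta\in\{\pm1\}$, $\bar\pi\in\{\pm2\}$ by injectivity of the $n$-th power map on $\mathbb{F}_q^*$ when $\gcd(n,q-1)=1$) are all in the right spirit. However, the proof is genuinely incomplete, and you say so yourself: you arrive at $\bar\mu=1$ and $\bar\eta\bar\pi^{b-1}\equiv 2\pmod q$ alongside $\mu\equiv -1\pmod{\fP^{b-1}}$, but these constraints do not by themselves force a contradiction uniform in $b\geq 6$ (for instance, $\bar\eta\bar\pi^{b-1}\equiv 2$ amounts only to $2^{b-2}\equiv \pm1\pmod q$, which holds for infinitely many $b$ depending on $q$). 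The step that closes the argument is missing, so the proposal does not constitute a proof.

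The paper proceeds differently and avoids this obstruction entirely: it rewrites $\alpha+1=\gamma^2$ as the $S_K$-unit equation $x+y=1$ with $x=\frac{\gamma+1}{2}$, $y=\frac{1-\gamma}{2}$ and $\alpha=-4xy$, then invokes a result of Freitas--Kraus--Siksek (cited as~\cite{SKN}) which states that under exactly hypotheses (i)--(iv) every solution to the $S_K$-unit equation satisfies $\max\{v_{\fP}(x),v_{\fP}(y)\}<2v_{\fP}(2)=2$. This gives $v_{\fP}(\alpha)=2v_{\fP}(2)+v_{\fP}(x)+v_{\fP}(y)<6$ immediately. In other words, the real content of Theorem~\ref{2local} is outsourced to that reference, whereas you were attempting to reprove it from scratch. (Interestingly, the paper \emph{does} carry out a self-contained norm argument of the sort you were aiming for in the analogous $(p,p,3)$ case, via Lemmas~\ref{L1} and~\ref{L2}; if you want to make your approach work for $(p,p,2)$, you would need a similarly refined argument, e.g.\ controlling not just the residue mod $\fq$ but also a second invariant of $\mu$, which is precisely what you flagged as the ``genuinely non-trivial part.'')
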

\begin{remark}
A few examples of totally real fields $K$ satisfying the conditions above are the degree $3$ extensions of narrow class number $1$, which have the following defining polynomials and totally ramified prime $q$:
\begin{itemize}
    \item $p_1(x)=x^3-51x-85 \:\:(q=17)$,
    \item $p_2(x)=x^3-x^2-40x+13\:\:(q=11)$,
    \item $p_3(x)=x^3-x^2-38x-75\:\:(q=23)$,
    \item $p_4(x)=x^3-17x-17\:\:(q=17)$.
\end{itemize}

\end{remark}
We use the same methods to study the asymptotic behaviour of the analogue $(p,p,3)$ equation and we get the following:
\begin{theorem}[Main Theorem for $(p,p,3)$] \label{main2}
Let $K$ be a totally real number field with $Cl_{S_K}(K)[3]=\{1\}$ where $S_K:=\{\mathfrak{P}\: : \mathfrak{P} \text{ is a prime of } K \text{ above } 3\}$.
Suppose that there exists some distinguished prime $\tilde{\mathfrak{P}} \in S_K$ such that every solution $(\alpha,\beta,\gamma) \in \mathcal{O}_{S_K}^* \times \mathcal{O}_{S_K}^* \times \mathcal{O}_{S_K}$ to the $S_K$ equation
$$ \alpha+\beta=\gamma^3
$$
satisfies $|v_{\tilde{\mathfrak{P}}}(\frac{\alpha}{\beta})|\leq 3v_{\tilde{\mathfrak{P}}}(3)$.
Then, there is a constant $B_K$  (depending only on $K$) such that for each rational prime $p>B_K$, the equation $a^p+b^p=c^3$ has no primitive, non-trivial solutions $(a,b,c) \in \mathcal{O}_K^3$ with $\tilde{\mathfrak{P}}|b$.
\end{theorem}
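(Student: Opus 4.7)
My plan is to run the same modularity/level-lowering/image-of-inertia machinery used for Theorem \ref{main1}, adapted to the cubic signature. Given a hypothetical primitive, non-trivial solution $(a,b,c) \in \cO_K^3$ with $\tilde{\fP} \mid b$ and $p$ a large rational prime, I would first attach a Darmon--Merel style Frey elliptic curve $E = E_{a,b,c}/K$, essentially of the form $Y^2 = X^3 + 3cX^2 + 3b^p X$ (possibly after an $S_K$-twist). The hypothesis $Cl_{S_K}(K)[3] = \{1\}$ plays the same role that $Cl_{S_K}(K)[2]=\{1\}$ played before: it lets one absorb the ambiguity in representing $c$ as a cube times an $S_K$-unit, so the Frey curve model can be chosen with minimal discriminant supported inside $S_K \cup \{\tilde{\fP}\}$ up to a bounded set. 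A local conductor analysis then yields a bound on the conductor of $E$ depending only on $K$.

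For $p$ beyond a constant depending on $K$, modularity of $E$ follows from Freitas--Le Hung--Siksek, and irreducibility of $\rep_{E,p}$ follows from the Freitas--Siksek criterion. Applying Fujiwara--Jarvis level lowering to $\rep_{E,p}$ produces a Hilbert cuspidal newform $\ff$ over $K$ of parallel weight $2$, trivial character, and level $\cN$ whose support lies inside $S_K$, together with a prime $\fp \mid p$ of the Hecke field and an isomorphism $\rep_{E,p} \cong \rep_{\ff,\fp}$. Since $\cN$ varies over a finite set independent of $p$, one obtains an elliptic curve $E'/K$ (or, via Blasius--Rogawski as in the $(p,p,2)$ proof, rules out the quaternionic case for $p$ large) with good reduction outside $S_K$ and $\rep_{E',p} \cong \rep_{E,p}$.

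Now I would perform the image-of-inertia comparison at $\tilde{\fP}$. Because $\tilde{\fP} \mid b$, the Frey curve $E$ has multiplicative reduction at $\tilde{\fP}$ with $p \mid v_{\tilde{\fP}}(j_E)$, and $|v_{\tilde{\fP}}(j_E)|$ grows with $p$. The isomorphism of residual representations then forces $E'$ to also be multiplicative at $\tilde{\fP}$ with $v_{\tilde{\fP}}(j_{E'}) \equiv v_{\tilde{\fP}}(j_E) \pmod{p}$, whence for $p > B_K$ the absolute value $|v_{\tilde{\fP}}(j_{E'})|$ is itself large. Since $E'$ has good reduction outside $S_K$, its Weierstrass invariants are $S_K$-units up to cubes, and the $\lambda$-invariant attached to $E'$ supplies a triple $(\alpha,\beta,\gamma) \in \cO_{S_K}^* \times \cO_{S_K}^* \times \cO_{S_K}$ with $\alpha + \beta = \gamma^3$ and $v_{\tilde{\fP}}(\alpha/\beta) = v_{\tilde{\fP}}(j_{E'})$ up to bounded correction. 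The standing hypothesis caps this by $3v_{\tilde{\fP}}(3)$, which contradicts the growth with $p$.

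The main obstacle is the last step: producing the auxiliary $S_K$-solution of $\alpha + \beta = \gamma^3$ from the data of $E'$ with the sharp constant $3v_{\tilde{\fP}}(3)$. Unlike the quadratic case, where the parametrisation via $X_0(2)$-points is quite clean, the cubic analogue requires a careful use of Kummer theory mod cubes, and the control of the local Kodaira type at $\tilde{\fP}$ on the $E'$ side governs the precise multiplicative constant in the bound. Showing that all the types that can occur at $\tilde{\fP}$ contribute a correction bounded by $3v_{\tilde{\fP}}(3)$, so that the hypothesis of the theorem is exactly what is needed, is the technical heart of the argument.
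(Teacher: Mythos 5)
Your outline captures the high-level shape of the argument (Frey curve, modularity, irreducibility, level lowering, Eichler--Shimura, then an $S_K$-unit contradiction at $\tilde{\fP}$), which is indeed the route the paper takes. However, there are several substantive errors and gaps that together mean the proposal would not go through.

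First, the Frey curve is wrong. You propose $Y^2 = X^3 + 3cX^2 + 3b^p X$, which has a $K$-rational \emph{two}-torsion point at $(0,0)$; that is the right template for signature $(p,p,2)$, not $(p,p,3)$. The paper attaches instead the curve $E\colon Y^2 + 3cXY + a^p Y = X^3$, which has $(0,0)$ as a point of order $3$. This is not cosmetic: the entire mechanism that produces a solution to $\alpha + \beta = \gamma^3$ relies on the level-lowered curve $E'$ acquiring a $K$-rational $3$-torsion point (as in Theorem~\ref{LL3}(iii)), which then puts $E'$ in the normal form $Y^2 + c'XY + d'Y = X^3$ of Lemma~\ref{torsion}(ii). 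With that normal form one sets $\lambda = c'^3/d'$ and $\mu = \lambda - 27$ and reads off $j_{E'} = (\mu+27)(\mu+3)^3\mu^{-1}$; good reduction outside $S_K$ and Lemma~\ref{mainelliptic}(ii) give $(\lambda)\cO_K = I^3 J$ with $J$ an $S_K$-ideal, and it is at \emph{this} point --- not in normalising the original Frey curve --- that $Cl_{S_K}(K)[3]=\{1\}$ is used, to conclude $\lambda = u\gamma^3$ with $u \in \cO_{S_K}^*$. Dividing $\mu + 27 = \lambda$ by $u$ then produces $\alpha+\beta=\gamma^3$ exactly, with $\alpha = \mu/u$, $\beta = 27/u$, and $\alpha/\beta = \mu/27$; the hypothesis $|v_{\tilde{\fP}}(\alpha/\beta)| \leq 3v_{\tilde{\fP}}(3)$ then translates directly into a bound on $v_{\tilde{\fP}}(\mu)$ with no Kodaira-type case analysis at all. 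The "technical heart" you flag as the main obstacle is therefore handled cleanly by the $3$-torsion parametrisation; your proposal does not contain this ingredient, and without a $3$-torsion point on $E'$ there is no cube relation to extract.

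Second, the image-of-inertia and final contradiction are misstated. At $\tilde{\fP}\mid b$ one has $v_{\tilde{\fP}}(j_E) = 3v_{\tilde{\fP}}(3) - p\,v_{\tilde{\fP}}(b)$, so in fact $p \nmid v_{\tilde{\fP}}(j_E)$ for $p$ large --- this non-divisibility is precisely what makes $p \mid \#\rep_{E,p}(I_{\tilde{\fP}})$ (Lemma~\ref{lemma}), which is needed both for Theorem~\ref{inertia} and to force $E'$ to have potentially multiplicative reduction at $\tilde{\fP}$. You state the opposite divisibility. Moreover, $|v_{\tilde{\fP}}(j_{E'})|$ does not "grow with $p$": $E'$ ranges over a finite set of curves of bounded conductor independent of $p$, so $v_{\tilde{\fP}}(j_{E'})$ is bounded. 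The actual contradiction is static, not asymptotic: the image-of-inertia argument forces $v_{\tilde{\fP}}(j_{E'}) < 0$, while a short three-case analysis on $v_{\tilde{\fP}}(\mu)$ (using the hypothesis and $j_{E'} = (\mu+27)(\mu+3)^3\mu^{-1}$) forces $v_{\tilde{\fP}}(j_{E'}) \ge 0$.
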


\begin{remark}
By Theorem \ref{equation} the equation \[\alpha+\beta=\gamma^3,\qquad (\alpha, \beta, \gamma) \in \mathcal{O}_{S_K}^* \times \mathcal{O}_{S_K}^* \times \mathcal{O}_{S_K}\] has finitely many solutions up to scaling by a cube in $\mathcal{O}_{S_K}^*$, and these are effectively computable.
Hence the criteria in Theorem \ref{main2} is testable in finite time.
\end{remark}
Similarly to the $(p,p,2)$ case, the following hold when employing local information. We will consider various field extensions involving the primitive cube root of unity $\omega := \cos (\frac{2\pi}{3})+i\sin(\frac{2\pi}{3})$.
\begin{theorem}\label{3inert}
Let $K$ be a totally real number field such that $3 \nmid h_{K(\omega)}$, $3 \nmid h_K$ and in which $3$ is inert. Let $\mathfrak{P}$ be the only prime above $3$, and hence $S_K = \{ \mathfrak{P} \}$.
Suppose that every solution $(\alpha, \gamma) \in \mathcal{O}^*_{S_K}\times \cO_{S_K}$ with $v_{\mathfrak{P}}(\alpha)\geq 0$ to the equation
\begin{equation}\label{3ineq}
     \alpha + 1 = \gamma^3
\end{equation}
satisfies $v_{\fP}(\alpha)\leq 3$.
Then, there is a constant $B_K$  (depending only on $K$) such that for each rational prime $p>B_K$, the equation $a^p+b^p=c^3$ has no primitive, non-trivial solutions $(a,b,c) \in \cO_K^3$ with $3 | b$.
\end{theorem}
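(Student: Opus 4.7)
The plan is to deduce Theorem \ref{3inert} as a specialization of Theorem \ref{main2}, by verifying the two hypotheses of that theorem under the more concrete local assumptions made here. The two items to check are the class group condition $Cl_{S_K}(K)[3]=\{1\}$ and the valuation bound on the $S_K$-unit equation $\alpha+\beta=\gamma^3$.

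For the class group condition, I would observe that since $3$ is inert in $K$, the unique prime $\fP$ above $3$ coincides with the principal ideal $(3)$. Hence the natural surjection $Cl(K)\twoheadrightarrow Cl_{S_K}(K)$ has trivial kernel, so $Cl_{S_K}(K)\cong Cl(K)$, and the hypothesis $3\nmid h_K$ immediately gives $Cl_{S_K}(K)[3]=\{1\}$.

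For the unit equation bound, let $(\alpha,\beta,\gamma)\in \cO_{S_K}^* \times \cO_{S_K}^* \times \cO_{S_K}$ be a solution of $\alpha+\beta=\gamma^3$; the goal is to show $|v_{\fP}(\alpha/\beta)|\leq 3v_{\fP}(3)=3$. After possibly swapping $\alpha$ and $\beta$ (which inverts $\alpha/\beta$ and does not affect the absolute value), I may assume $v_{\fP}(\alpha)\geq v_{\fP}(\beta)$, so $v_{\fP}(\alpha/\beta)\geq 0$. Dividing through by $\beta$, the equation takes the form $\alpha/\beta+1=\gamma^3/\beta$. The idea is to convert this into the shape $\alpha'+1=(\gamma')^3$ covered by the hypothesis of Theorem \ref{3inert}, by scaling the triple $(\alpha,\beta,\gamma)\mapsto (\lambda^3\alpha,\lambda^3\beta,\lambda\gamma)$ so that $\beta$ becomes a cube in $\cO_{S_K}^*$.

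The principal obstacle, which I expect to be the most delicate step, is justifying this reduction. Since $\cO_{S_K}^*/(\cO_{S_K}^*)^3$ is generally nontrivial, one cannot in general make $\beta$ a cube by scaling alone. The extra assumption $3\nmid h_{K(\omega)}$ is needed precisely to carry out a Kummer-style descent over the cyclotomic extension $K(\omega)$: there, once $\beta$ is recognized as a cube $\eta^3$ in $\cO_{S_{K(\omega)}}$, one factors $\alpha=\gamma^3-\beta=(\gamma-\eta)(\gamma-\omega\eta)(\gamma-\omega^2\eta)$, and the combined class number conditions $3\nmid h_K$ and $3\nmid h_{K(\omega)}$ force each factor to generate an ideal that is a cube and principal, so that cube roots may be extracted inside $\cO_{S_{K(\omega)}}^*$; tracing back to $K$ then produces a normalized solution $(\alpha',1,\gamma')$ with $v_{\fP}(\alpha')=v_{\fP}(\alpha/\beta)$. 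Applying the hypothesis of Theorem \ref{3inert} to this normalized solution gives $v_{\fP}(\alpha')\leq 3$, hence $|v_{\fP}(\alpha/\beta)|\leq 3$, so both hypotheses of Theorem \ref{main2} are verified and the conclusion follows with the same constant $B_K$ that Theorem \ref{main2} produces.
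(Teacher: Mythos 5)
Your reduction of the class group hypothesis is fine: since $3$ is inert, $\mathfrak{P}=(3)$ is principal, so $Cl_{S_K}(K)\cong Cl(K)$ and $3\nmid h_K$ gives $Cl_{S_K}(K)[3]=\{1\}$. Likewise it is correct that everything hinges on verifying the valuation bound in the hypothesis of Theorem \ref{main2}. But there is a genuine gap in the middle step, and the way you deploy $3\nmid h_{K(\omega)}$ is not the route the paper takes and, as sketched, does not work.

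You want to scale $(\alpha,\beta,\gamma)\mapsto(\lambda^3\alpha,\lambda^3\beta,\lambda\gamma)$ until $\beta=1$ and then invoke the stated hypothesis of Theorem \ref{3inert} directly. As you note, this only works when $\beta$ is already a cube in $\mathcal{O}_{S_K}^*$. Your proposed rescue — a ``Kummer-style descent'' over $K(\omega)$ — begins from ``once $\beta$ is recognized as a cube $\eta^3$ in $\mathcal{O}_{S_{K(\omega)}}$,'' but this premise is false in general: if $\beta$ is not a cube in $K$ then it is not a cube in the quadratic extension $K(\omega)$ either (a cube root of $\beta$ in $K(\omega)$ would be a root of the cubic $X^3-\beta$ lying in a degree-$2$ extension, forcing a linear factor and hence a cube root already in $K$). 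The factorization $\alpha=(\gamma-\eta)(\gamma-\omega\eta)(\gamma-\omega^2\eta)$ and the subsequent ideal arithmetic would take place in $L=K(\omega,\sqrt[3]{\beta})$, and you would then need information about $h_L$, which is not among the hypotheses. Additionally, after rescaling you must still handle $v_{\mathfrak{P}}(\beta)\in\{1,2\}$; you pass over these cases, although they are easy (the cube $\gamma^3$ has $\mathfrak{P}$-valuation divisible by $3$, forcing $v_{\mathfrak{P}}(\alpha)=v_{\mathfrak{P}}(\beta)$).

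The paper's actual use of $3\nmid h_{K(\omega)}$ is different in kind: in the case $v_{\mathfrak{P}}(\beta)=0$ with $\beta$ \emph{not} a cube, it argues by contradiction. If $v_{\mathfrak{P}}(\alpha)>3$, then $\beta\equiv\gamma^3\pmod{3^4}$, and an explicit integral element $\delta=\frac{\gamma^2+\gamma\omega\sqrt[3]{\beta}+\omega^2\sqrt[3]{\beta}}{3}$ with minimal polynomial in $\mathcal{O}_K[X]$ of discriminant coprime to $3$ shows that $K(\omega,\sqrt[3]{\beta})/K(\omega)$ is unramified at $3$, hence everywhere (as $\beta$ is an $S_K$-unit and $K(\omega)$ is totally complex). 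That exhibits a cyclic degree-$3$ unramified extension of $K(\omega)$, contradicting $3\nmid h_{K(\omega)}$. So the class-number hypothesis is used to \emph{rule out} the non-cube case with large $v_{\mathfrak{P}}(\alpha)$, not to manufacture a normalized solution $(\alpha',1,\gamma')$ from it. Only after this — when $\beta$ is a cube — does one scale to $\beta=1$ and invoke the stated hypothesis of the theorem. You would need to replace your descent step with this unramifiedness argument to close the gap.
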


\begin{theorem}\label{3quad}
Let $d$ a positive, square-free satisfying $d\equiv 2 \mod{3}$. Write $K=\mathbb{Q}(\sqrt{d})$ and suppose $3 \nmid h_{K(\omega)}$, $3\nmid h_K$.
Then, there is a constant $B_K$  (depending only on $K$) such that for each rational prime $p>B_K$, the equation $a^p+b^p=c^3$ has no primitive, non-trivial solutions $(a,b,c) \in \cO_K^3$ with $3 | b$.
\end{theorem}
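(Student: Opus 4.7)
The plan is to deduce Theorem~\ref{3quad} from Theorem~\ref{3inert} applied to $K = \mathbb{Q}(\sqrt d)$. Three hypotheses must be checked: $3 \nmid h_K$ and $3 \nmid h_{K(\omega)}$, the inertness of $3$ in $K$, and the $S_K$-unit equation bound $v_{\fP}(\alpha) \leq 3$ for solutions of $\alpha + 1 = \gamma^3$ with $v_{\fP}(\alpha) \geq 0$. The first is built into the hypothesis of Theorem~\ref{3quad} and is inherited directly.

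For inertness: since $d$ is squarefree and $d \equiv 2 \pmod 3$, we have $3 \nmid d$, so $3$ is unramified in $K$. The Legendre symbol $\left(\tfrac{d}{3}\right) = \left(\tfrac{2}{3}\right) = -1$ shows $3$ is inert; thus $\fP = 3\mathcal{O}_K$ is principal with $\mathcal{O}_K/\fP \cong \mathbb{F}_9$.

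For the $S_K$-unit equation bound I would argue by contradiction: suppose $(\alpha,\gamma)$ satisfies $\alpha + 1 = \gamma^3$, $v_{\fP}(\alpha) \geq 0$, and $v_{\fP}(\alpha) \geq 4$. Then $v_{\fP}(\gamma^3) = v_{\fP}(1+\alpha) \geq 0$ forces $\gamma \in \mathcal{O}_K$, and since cubing is a bijection on $\mathbb{F}_9^*$ (as $\gcd(3,8)=1$), the congruence $\gamma^3 \equiv 1 \pmod{\fP}$ yields $\gamma \equiv 1 \pmod{\fP}$. Factor $\alpha = (\gamma-1)(\gamma^2+\gamma+1)$; any common prime divisor of the two factors divides the resultant $3$ and hence lies in $S_K$, so since $\alpha$ is an $S_K$-unit and $\fP$ is principal, each factor equals a power of $3$ times an element of $\mathcal{O}_K^*$. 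The expansion $\gamma^2+\gamma+1 = 3 + 3(\gamma-1) + (\gamma-1)^2$ has $\fP$-valuations $1, \geq 2, \geq 2$, giving $v_{\fP}(\gamma^2+\gamma+1) = 1$ exactly, so $v_{\fP}(\gamma-1) \geq 3$. I would then pass to $L = K(\omega)$, in which $\fP$ ramifies as $\mathfrak{Q}^2 = (1-\omega)^2\mathcal{O}_L$, so that the further factorization $\gamma^2+\gamma+1 = (\gamma-\omega)(\gamma-\omega^2)$ becomes available; the three linear factors $\gamma-1,\gamma-\omega,\gamma-\omega^2$ are pairwise coprime outside $\mathfrak{Q}$ because their pairwise differences are associates of $1-\omega$. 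Using $3 \nmid h_L$, a Kummer-style descent expresses each factor as a unit times a power of $\mathfrak{Q}$ times a cube in $\mathcal{O}_L$; comparing the resulting $\mathfrak{Q}$-adic valuations (which are forced to be $\geq 6, 1, 1$) together with the units modulo cubes should produce the contradiction.

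The main obstacle is this final descent in $L$: both class-number hypotheses $3 \nmid h_K$ and $3 \nmid h_L$ are needed to trivialize the cohomological obstructions separating ideal-cubes from element-cubes, and the ramification of $3$ in $L/K$ has to be tracked precisely so that the unit data lines up. Once this bound is established, Theorem~\ref{3inert} immediately yields Theorem~\ref{3quad}.
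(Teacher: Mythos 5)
Your reduction to Theorem~\ref{3inert} is correct, as is your inertness argument via the Legendre symbol. You also correctly begin the $S_K$-unit bound argument in the same spirit as the paper: assume $v_{\fP}(\alpha) \geq 4$, conclude $\gamma \equiv 1 \pmod{\fP}$, factor $\alpha=(\gamma-1)(\gamma^2+\gamma+1)$, observe $v_{\fP}(\gamma^2+\gamma+1)=1$ and $v_{\fP}(\gamma-1)\geq 3$, and pass to $L=K(\omega)$ where the quadratic factor splits further.

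The final contradiction step, however, has a genuine gap, and what you sketch is not what the paper does. You propose a ``Kummer-style descent'' using $3 \nmid h_L$ to write each factor $\gamma-\omega^i$ as a unit times a power of $\mathfrak{Q}$ times a cube. But there is no cube in sight: the product $xyz = \alpha$ is an $S_L$-unit, not a cube, so the classical Kummer mechanism (which uses that coprime factors of a cube are themselves cubes up to units) does not apply. Moreover, since the $\gamma-\omega^i$ are pairwise coprime outside $S_L$ and divide the $S_L$-unit $\alpha$, they are automatically $S_L$-units --- no class-number input is needed for that, so the role you assign to $3 \nmid h_L$ in this step is spurious. (In the paper, $3 \nmid h_{K(\omega)}$ is consumed inside the proof of Theorem~\ref{3inert}, Case (3), not in the proof of Theorem~\ref{3quad}.) You yourself flag this step as the ``main obstacle'' and say the valuation comparison ``should produce the contradiction'' without carrying it out.

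The paper's actual argument at this point is a norm computation, not a descent. Setting $x=\gamma-1$, $y=\gamma-\omega$, $z=\gamma-\omega^2$ and $\fp=(\omega-1)\cO_L$, the Galois symmetry $\tau(\omega)=\omega^2$ fixes $x$ and $\fp$ and swaps $y,z$, giving $v_{\fp}(y)=v_{\fp}(z)$; combined with $v_{\fp}(x-y)=v_{\fp}(y-z)=1$ and $v_{\fp}(\alpha)\geq 8$ this pins down $v_{\fp}(y)=v_{\fp}(z)=1$ and $v_{\fp}(x)\geq 6$. One then forms $u=x/(\omega-1)$ and $v=-y/(\omega-1)$, so $u+v=1$ with $\fp^5\mid u$ and $v\in\cO_L^*$. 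Since $v\equiv 1\pmod 3$ and $\sigma(v)\equiv 1\pmod 3$ for $\sigma$ generating $\mathrm{Gal}(L/F)$ (with $F=\mathbb{Q}(\omega)$), one gets $\mathrm{Norm}_{L/F}(v)\equiv 1\pmod 3$ inside $\cO_F^*=\langle\omega+1\rangle$, forcing $\mathrm{Norm}_{L/F}(v)=1$; the structure of $\cO_L^*$ then forces $v\in\cO_K^*$, hence $u=1-v\in\cO_K$, contradicting that $u$ is a $K$-integer divided by $\omega-1\notin K$. If you want to complete your write-up, you should replace the descent sketch with this norm argument (or supply an independent, fully worked contradiction).
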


\begin{theorem}\label{3local}
Let $K$ be a totally real field of degree $n$, and let $q \geq 5$ be a rational prime. Suppose 
\begin{enumerate}
    \item $3 \nmid h_{K(\omega)}$ and $3 \nmid h_K$,
    \item $\gcd(n,q^2-1)=1$,
    \item $3$ is inert in $K$,
    \item $q$ totally ramifies in $K$.
\end{enumerate}
Then, there is a constant $B_K$  (depending only on $K$) such that for each rational prime $p>B_K$, the equation $a^p+b^p=c^3$ has no primitive, non-trivial solutions $(a,b,c) \in \cO_K^3$ with $3 | b$.
\end{theorem}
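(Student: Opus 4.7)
The plan is to deduce Theorem \ref{3local} from Theorem \ref{3inert}. The conditions $3 \nmid h_{K(\omega)}$, $3 \nmid h_K$, and $3$ inert in $K$ directly supply the first three hypotheses of Theorem \ref{3inert}, so the only remaining task is to verify, using the additional hypotheses $\gcd(n, q^2-1) = 1$ and $q$ totally ramified in $K$, that every solution $(\alpha, \gamma) \in \cO_{S_K}^* \times \cO_{S_K}$ with $v_{\fP}(\alpha) \geq 0$ to $\alpha + 1 = \gamma^3$ satisfies $v_{\fP}(\alpha) \leq 3$.

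I would argue by contradiction. Assume $v_{\fP}(\alpha) \geq 4$; since $\fP = 3\cO_K$ is principal (as $3$ is inert), one can write $\alpha = 3^a u$ with $u \in \cO_K^*$ and $a \geq 4$, and factor $(\gamma-1)(\gamma^2+\gamma+1) = 3^a u$. A short $\fP$-adic analysis, using that $\omega \notin K_{\fP}$ (since $\mathbb{Q}_3(\omega)/\mathbb{Q}_3$ is ramified while $K_{\fP}/\mathbb{Q}_3$ is unramified), forces $\gamma \equiv 1 \pmod{\fP}$, $v_{\fP}(\gamma^2+\gamma+1) = 1$, and hence $\gamma^2+\gamma+1 = 3v$ for some $v \in \cO_K^*$; solving the resulting quadratic in $\gamma$ then shows that $3(4v-1)$ must be a square in $K$.

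To derive a contradiction, I would reduce modulo the unique prime $\fq$ above $q$. Since $q$ totally ramifies, $\cO_K/\fq \cong \mathbb{F}_q$, and for any $u \in \cO_K^*$ the identity $\bar u^n = \overline{\N(u)} = \pm 1$ in $\mathbb{F}_q^*$ together with $\gcd(n, q-1) = 1$ pins down $\bar u, \bar v \in \{\pm 1\}$. Combining these values with the cubic $\bar\gamma^3 = 3^a \bar u + 1$ and the square condition $3(4\bar v-1) \in (\mathbb{F}_q^*)^2$ should rule out each case, except possibly those in which the relevant cube roots naturally live in $\mathbb{F}_{q^2}$ rather than $\mathbb{F}_q$ (i.e.\ when $q \equiv 2 \pmod 3$); these residual cases are handled by passing to $K(\omega)$, in which $\fq$ is inert with residue field $\mathbb{F}_{q^2}$, and repeating the norm argument in $\cO_{K(\omega)}^*$ using the remaining half $\gcd(n, q+1) = 1$ of the coprimality condition.

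The main obstacle I foresee is this final case analysis: the interplay between the $\fP$-adic refinement (which forces $v \equiv 1$ to high order in $\fP$) and the $\fq$-adic constraints $\bar u, \bar v \in \{\pm 1\}$ needs to be worked out carefully to close every branch. The role of $\gcd(n, q+1) = 1$ is precisely to dispatch the case $q \equiv 2 \pmod 3$ via $K(\omega)$; this is the cubic-specific counterpart of the simpler $(p,p,2)$ analysis underlying Theorem \ref{2local}, where only $\gcd(n, q-1) = 1$ is needed.
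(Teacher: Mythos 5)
Your reduction to Theorem \ref{3inert} is correct and matches the paper, and the local analysis at $\fP$ giving $\gamma\equiv 1\pmod\fP$, $v_{\fP}(\gamma^2+\gamma+1)=1$, and $\gamma^2+\gamma+1=3v$ with $v\in\cO_K^*$ is fine. The difficulty is what comes next, and this is where the proposal has a genuine gap.

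Your plan attaches two constraints at $\fq$: that $\bar u,\bar v\in\{\pm1\}$ in $\mathbb{F}_q$ (from $\gcd(n,q-1)=1$ and $\N_{K/\mathbb{Q}}(\cdot)=\pm1$), and that the discriminant $3(4v-1)$ is a square. But the discriminant condition is \emph{vacuous}: both roots of $X^2+X+(1-3v)$ are $\gamma$ and $-1-\gamma$, which already lie in $K$, so $3(4v-1)$ is automatically a square and its reduction mod $\fq$ carries no new information. Once you pin $\bar v$ (and one can in fact sharpen your observation to $\N_{K/\mathbb{Q}}(v)=1$, hence $\bar v=1$, using $v\equiv 1\bmod 3^{a-1}$), the relation $\bar\gamma^2+\bar\gamma+1=3$ forces $\bar\gamma\in\{1,-2\}$, and $\bar\gamma=1$ is excluded since $\fq\nmid\alpha$. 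That leaves $\bar\gamma=-2$, $\bar\alpha=-9=\bar3^a\bar u$, which is perfectly consistent with appropriate choices of $\bar 3$, $a$, and $\bar u=\pm1$. The branch does not close, even for $q\equiv 1\pmod 3$; the split by $q\bmod 3$ is not the right dividing line, because the obstruction is not about whether cube roots of unity live in $\mathbb{F}_q$ but about the unit group you take norms into.

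The paper's proof avoids this by never staying in $K$: it passes to $L=K(\omega)$ immediately (following the proof of Theorem \ref{3quad}), writes $\alpha=(\gamma-1)(\gamma-\omega)(\gamma-\omega^2)$, and normalizes to a genuine $S_L$-unit equation $u+v=1$ with $u=(\gamma-1)/(\omega-1)$ satisfying $9\mid u$ and $v=-(\gamma-\omega)/(\omega-1)\in\cO_L^*$. The point is then to take $\N_{L/F}$ into $F=\mathbb{Q}(\omega)$, whose unit group is the finite cyclic group $\mu_6=\langle\omega+1\rangle$ (rather than the two-element group $\{\pm1\}=\cO_{\mathbb{Q}}^*$ you use). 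The $\fp$-adic constraint $v\equiv 1\bmod 9$ forces $\N_{L/F}(v)=1$ exactly, while Lemmas \ref{L1} and \ref{L2} show $(v\bmod\fq)\in\mu_6$ and $\N_{L/F}(v)\equiv(v\bmod\fq)^n\pmod{\tilde\fq}$. Since $u+v=1$ and $\fq\nmid u$, we get $v\not\equiv 1\pmod\fq$; and since $\gcd(n,q^2-1)=1$ implies $\gcd(n,6)=1$, the $n$-th power map is injective on $\mu_6$, so $(v\bmod\fq)^n\neq 1$, contradiction. This interplay — the $\fp$-adic datum fixing the norm exactly while the $\fq$-adic datum says the norm's residue cannot be $1$ — is what the $K$-only argument cannot replicate, because $\N_{K/\mathbb{Q}}(v)=1$ combined with $\bar v^n=1$ just gives $\bar v=1$, which is not contradicted by your relation $u=wv$ (you do not have a unit equation $u+v=1$ in your setup, so $\bar v=1$ does not force $\fq\mid u$).

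To repair the proposal you would essentially need to discard the $(\gamma-1)(\gamma^2+\gamma+1)$ decomposition and the discriminant side-condition, pass to $K(\omega)$ from the start, set up the linear unit equation, and run the norm argument into $\mu_6$ as above — i.e., converge to the paper's route.
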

\begin{remark}
A few examples of totally real fields $K$ satisfying the conditions above are the degree $5$ extensions with totally ramified prime $q=5$, which have the following defining polynomials and corresponding $h_K, h_{K(\omega)}$:

\begin{itemize}
    \item $p_1(x)=x^5 - 25x^3 - 10x^2 + 50x - 20\:\: (h_K=1, h_{K(\omega)}=29)$,
    \item $p_2(x)=x^5 - 30x^3 - 20x^2 + 160x + 128 \:\:(h_K=1, h_{K(\omega)}=29)$,
    \item $p_3(x)=x^5 - 15x^3 - 10x^2 + 10x + 4 \:\:(h_K=1, h_{K(\omega)}=31)$,
    \item $p_4(x)=x^5 - 20x^3 - 15x^2 + 10x + 4 \:\:(h_K=1, h_{K(\omega)}=361)$.
\end{itemize}
\end{remark}
\subsection{Recent progress}
More recently, Işik, Kara and Özman proved in \cite{IKO} a similar asymptotic result for signature $(p,p,3)$ over general number fields $K$ with narrow class number one satisfying some technical conditions. In the appendix, they show how this result can be adapted to signature $(p,p,2)$. These results use standard modularity conjectures and the study of Bianchi newforms.
\subsection{Notational conventions}
We will follow the notational conventions in \cite{SN}. Throughout $p$ denotes a rational prime, and $K$ a totally real number field, with ring of integers $\mathcal{O}_K$. For a non-zero ideal $I$ of $\mathcal{O}_K$ ,
we denote by $[I]$ the class of $I$ in the class group Cl($K$).\\
Let $G_K=\text{Gal}(\bar{K}/K)$. For an elliptic curve $E/K$, we write 
\begin{equation*}
    \overline{\rho}_{E,p}: G_K \to \text{Aut}(E[p])\simeq \text{GL}_2(\mathbb{F}_p)
\end{equation*}
for the representation of $G_K$ on the $p$-torsion of $E$. For a Hilbert eigenform $\mathfrak{f}$ over $K$, we let $\mathbb{Q}_{\mathfrak{f}}$ denote the field generated by its eigenvalues. In this situation $\varpi$ will denote a prime of $\mathbb{Q}_{\mathfrak{f}}$ above $p$; of course if $\mathbb{Q}_{\mathfrak{f}}=\mathbb{Q}$ we write $p$ instead of $\varpi$. All other
primes we consider are primes of $K$. We reserve the symbol $\mathfrak{P}$ for primes belonging
to $S$. An arbitrary prime of $K$ is denoted by $\mathfrak{q}$,
and $G_{\mathfrak{q}}$ and $I_{\mathfrak{q}}$ are the decomposition and inertia subgroups of $G_K$ at $\mathfrak{q}$.

\textbf{Acknowledgments.} I am sincerely grateful to my supervisor Samir Siksek for his continuous support, useful comments and reviewing this paper.

\section{Preliminaries}

\subsection{Elliptic Curves}
We begin by collecting some useful results about elliptic curves, as they play a key role in the modular approach of solving Diophantine equations. 
\begin{lemma}\label{torsion}
Let $K$ be a field of char$(K) \neq 2,3$ and $E/K$ an elliptic curve. The following holds:
\begin{enumerate}
    \item If $E$ has a $K$-rational point of order $2$, then E has a model of the form
    \begin{equation}\label{torsion2}
         E: Y^2=X^3+aX^2+bX.
    \end{equation}
    Moreover, there is a bijection between
    $$ \{ E/K \text{ with a } K\text{-torsion of order } 2 \text{ up to } \Bar{K}-\text{isomorphism} \} 
    \to \mathbb{P}^1(K) - \{ 4, \infty \} $$ via the map $E \to \lambda:=\frac{a^2}{b}.$
    \item If $E$ has a $K$-rational point of order $3$, then E has a model of the form  
    \begin{equation}\label{torsion3}
        E: Y^2 +cXY +dY=X^3.
    \end{equation}
    Moreover, there is a bijection between
    $$ \{ E/K \text{ with a } K\text{-torsion of order } 3 \text{ up to } \Bar{K}-\text{isomorphism} \} 
    \to \mathbb{P}^1(K) - \{ 27, \infty \} $$ via the map $E \to\lambda:= \frac{c^3}{d}$.
\end{enumerate}
\end{lemma}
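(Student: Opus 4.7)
My plan for part (i) is to start from a general Weierstrass model and, since $\operatorname{char}(K) \neq 2$, complete the square in $Y$ to obtain a short model $Y^2 = f(X)$ with $f$ monic of degree $3$. Any $K$-rational point of order $2$ on such a model has the form $(x_0, 0)$; translating $X \mapsto X + x_0$ sends it to the origin and forces the constant term of $f$ to vanish, yielding the desired shape $Y^2 = X^3 + aX^2 + bX$. For the bijection, I would compute the discriminant via the standard formulas, obtaining $\Delta = 16 b^2(a^2 - 4b)$, so non-singularity is equivalent to $b \neq 0$ and $a^2 \neq 4b$, i.e.\ $\lambda := a^2/b \in \mathbb{P}^1(K) \setminus \{4, \infty\}$. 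A quick check shows that the only admissible changes of variable preserving both the shape and the marked $2$-torsion point at $(0,0)$ are the scalings $(X, Y) \mapsto (u^2 X, u^3 Y)$, which send $(a, b) \mapsto (u^{-2} a, u^{-4} b)$ and therefore fix $\lambda$; conversely, given any allowed $\lambda$, the curve $Y^2 = X^3 + \lambda X^2 + \lambda X$ realises it.

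For part (ii) the strategy is analogous but more delicate. I would first translate the $K$-rational $3$-torsion point $P$ to $(0,0)$ in a general Weierstrass model, which forces $a_6 = 0$. The condition $3P = O$ is equivalent to the tangent line to $E$ at $(0,0)$ meeting $E$ with multiplicity exactly $3$ there (since by chord-tangent, the third intersection is $-2P$, and $3P = O \iff -2P = P$). Computing the tangent at $(0,0)$ — this requires $a_3 \neq 0$, otherwise the tangent would be vertical and $(0,0)$ would already be $2$-torsion — and imposing triple contact yields a polynomial relation among $a_1, a_2, a_3, a_4$. I would then use the origin-preserving admissible change $(X, Y) \mapsto (X, Y + sX)$ with $s = a_4/a_3$ to kill $a_4$; the $3$-torsion relation then forces $a_2 = 0$, leaving the desired model $Y^2 + cXY + dY = X^3$ with $c = a_1$ and $d = a_3$.

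For the bijection in (ii), I would compute the discriminant of this special model via the standard formulas, obtaining $\Delta = d^3(c^3 - 27 d)$, so non-singularity amounts to $d \neq 0$ and $c^3 \neq 27 d$, i.e.\ $\lambda := c^3/d \in \mathbb{P}^1(K) \setminus \{27, \infty\}$. An analysis parallel to part (i) shows the admissible transformations preserving both the form and $(0,0)$ are precisely the scalings $(X, Y) \mapsto (u^2 X, u^3 Y)$, which send $(c, d) \mapsto (u^{-1} c, u^{-3} d)$ and fix $\lambda$, with surjectivity witnessed by $Y^2 + \lambda XY + \lambda Y = X^3$.

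The main obstacle, I expect, lies in part (ii): correctly translating the order-$3$ condition into a Weierstrass relation on the $a_i$ and then verifying that the admissible transformations I describe exhaust all $\bar K$-isomorphisms preserving both the prescribed form and the marked torsion point. The hypothesis $\operatorname{char}(K) \neq 3$ presumably enters at this stage, since both the reduction to the stated shape and the inflection-point interpretation of order-$3$ points implicitly rely on it. Once these details are pinned down, the rest is routine manipulation of the standard Weierstrass invariants.
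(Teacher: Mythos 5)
Your derivation of the two normal forms matches the paper's essentially step for step: translate the marked torsion point to the origin, note $a_3\neq 0$ in the order-$3$ case, kill $a_4$ by $Y\mapsto Y+(a_4/a_3)X$, and then use the order-$3$ condition to eliminate the $X^2$ term (the paper does this by computing $2(0,0)=(-e,-d)$ directly, whereas you invoke the inflection-point characterisation of $3$-torsion; these are the same computation phrased differently). Where you diverge is the bijection: the paper establishes injectivity very quickly by expressing $j_E$ as a rational function of $\lambda$ and observing that $\lambda=\lambda'$ forces $j_E=j_{E'}$, while you instead classify the admissible changes of variable fixing the normal form and the marked point. Both are standard; the $j$-invariant route is shorter, but your route is closer to the underlying reason the moduli interpretation works.

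One small slip to flag: your explicit surjectivity witness in part (ii), $Y^2+\lambda XY+\lambda Y=X^3$, has $c=d=\lambda$ and therefore gives $c^3/d=\lambda^2$, not $\lambda$; you want something like $Y^2+\lambda XY+\lambda^2 Y=X^3$ (and $Y^2+Y=X^3$ for $\lambda=0$). Likewise the part (i) witness $Y^2=X^3+\lambda X^2+\lambda X$ is singular when $\lambda=0$. The paper sidesteps this by simply asserting that any admissible $\lambda$ can be written as $a^2/b$ (resp.\ $c^3/d$) with the non-degeneracy conditions satisfied. These are easily repaired and do not affect the soundness of your overall strategy.
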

\begin{proof}
\begin{enumerate}
    \item The first part is a well-known result.
   For the second part, we are given an elliptic curve $E/K$ with a $K$-torsion point of order $2$. After writing it as in (\ref{torsion2}), we make the assignment $E \mapsto \lambda := \frac{a^2}{b}.$ As $\Delta_{E}=2^{4}b^2(a^2-4b)$, non-singularity of $E$ gives $\lambda \in \mathbb{P}^1(K)-\{4,\infty \}$, which proves our map is well-defined.
   Moreover, any $\lambda \in \mathbb{P}^1(K)-\{4,\infty \}$ can be written as a ratio of the form $\frac{a^2}{b}$ with $b \neq 0$ and $a^2 \neq 4b$, and hence comes from an elliptic
curve with a $K$-rational $2$-torsion. Thus, our map is surjective.

Injectivity follows from writing $$j_{E}=2^8 \frac{(a^2-3b)^3}{b^2(a^2-4b)}=2^8\frac{(\lambda-3)^3}{\lambda-4}$$ and noting that $\lambda = \lambda'$ for given $E\to \lambda$, $E'\to \lambda'$ implies $j_{E}=j_{E'}$, which gives $E \simeq E'$.

    \item If $E$ is in Weierstrass form we can translate the $K$-torsion point to $(0,0)$. This will give a model of the form 
    $$E: Y^2+a_1XY+a_3Y=X^3+a_2X^2+a_4X$$
     We now impose the condition that $(0,0)$ has order $3$. First, we compute $-(0,0)=(0,-a_3)$ and note that we require $(0,0)\neq -(0,0)=(0,-a_3)$, so $a_3 \neq 0$. Now, by performing the change of variables
    \begin{equation}
        \begin{cases}
        Y \to (Y+\frac{a_4}{a_3}X)\\
        X \to X
        \end{cases}
    \end{equation}
    we get a model of the form
    $$E: Y^2+cXY+dY=X^3+eX^2 \text{ with } d=a_3 \neq 0.$$ Finally, we make use of the order $3$,
    \begin{equation}
        \begin{cases}
        (0,0)+(0,0)=-(0,0)=(0,-d)\\
        (0,0)+(0,0)=(-e,-d)
        \end{cases}
    \end{equation}
    Hence, we need $e=0$, and we get the desired form: $E: Y^2+cXY+dY=X^3$. \par 
    For the second part, we are given an elliptic curve $E/K$ with a $K$-torsion point of order $3$. After writing it as in (\ref{torsion3}), we make the assignment $E \mapsto \lambda := \frac{c^3}{d}.$ As $\Delta_{E}=d^3(c^3-27d)$, non-singularity of $E$ gives $\lambda \in \mathbb{P}^1(K)-\{27,\infty \}$, which proves our map is well-defined.
   Moreover, any $\lambda \in \mathbb{P}^1(K)-\{27,\infty \}$ can be written as a ratio of the form $\frac{c^3}{d}$ with $d \neq 0$ and $c^3 \neq 27d$, and hence comes from an elliptic
curve with a $K$-rational $3$-torsion. Thus, our map is surjective.\par 
Injectivity follows from writing $$j_{E}=\frac{c^3(c^3-24d)^3}{d^3(c^3-27d)}=\frac{\lambda(\lambda-24)^3}{\lambda-27}$$ and noting that $\lambda = \lambda'$ for given $E\to \lambda$, $E'\to \lambda'$ implies $j_{E}=j_{E'}$, which gives $E \simeq E'$.
  
\end{enumerate}
\end{proof}
\begin{lemma}\label{elliptic}
Let $K$ be a number field and $S$ a set of finite primes of $K$. Then:
\begin{enumerate}
    \item If $S$ contains the primes above $2$ we get the following bijection\\ 
    $\left\{
\begin{tabular}{@{}l@{}}
   $E/K$\text{ with a }$K$\text{-torsion of order }$2$ with potentially \\ \text{good reduction outside }$S$\text{ up to }$\Bar{K}$-\text{isomorphism}
\end{tabular} 
\right\} \longmapsto \cO^*_S$ via the map $E \to \mu:=\lambda-4 \in \cO^*_S$, where $\lambda$ is as in Lemma \ref{torsion} (i). \\
\item If $S$ contains the primes above $3$ we get the following bijection\\ 
    $\left\{
\begin{tabular}{@{}l@{}}
   $E/K$\text{ with a }$K$\text{-torsion of order }$3$ with potentially \\ \text{good reduction outside }$S$\text{ up to }$\Bar{K}$-\text{isomorphism}
\end{tabular} 
\right\} \longmapsto \cO^*_S$ via the map $E \to \mu:=\lambda-27 \in \cO^*_S$, where $\lambda$ is as in Lemma \ref{torsion} (ii). \\
\end{enumerate}
\end{lemma}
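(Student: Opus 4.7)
The plan is to piggy-back on Lemma \ref{torsion}, which already produces a bijection between $\bar{K}$-isomorphism classes of elliptic curves $E/K$ with a $K$-rational torsion point of order $2$ (respectively $3$) and $\mathbb{P}^1(K)\setminus\{4,\infty\}$ (respectively $\mathbb{P}^1(K)\setminus\{27,\infty\}$) via $E\mapsto\lambda$. Since the affine translation $\mu=\lambda-4$ (respectively $\mu=\lambda-27$) identifies the target with $K\setminus\{0\}$, the remaining content of Lemma \ref{elliptic} is that, under Lemma \ref{torsion}'s correspondence, the extra condition \emph{potentially good reduction outside $S$} cuts out precisely those $\lambda$ for which $\mu\in\mathcal{O}_S^*$.

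The key computational step is to rewrite the $j$-invariants from the proof of Lemma \ref{torsion} in the variable $\mu$:
\[
j_E=2^{8}\cdot\frac{(\mu+1)^{3}}{\mu}\quad\text{in part (i)},\qquad j_E=\frac{(\mu+27)(\mu+3)^{3}}{\mu}\quad\text{in part (ii)}.
\]
By the standard criterion (Néron--Ogg--Shafarevich / the theory of the $j$-line), $E$ has potentially good reduction at a prime $\mathfrak{q}$ of $K$ if and only if $v_{\mathfrak{q}}(j_E)\geq 0$. Fix $\mathfrak{q}\notin S$: in part (i) then $\mathfrak{q}\nmid 2$, so $2^{8}$ is a $\mathfrak{q}$-unit, and in part (ii) $\mathfrak{q}\nmid 3$, so both $3$ and $27$ are $\mathfrak{q}$-units. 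A short case split on $v_{\mathfrak{q}}(\mu)$ using the ultrametric inequality finishes the job: if $v_{\mathfrak{q}}(\mu)=0$ every numerator factor is $\mathfrak{q}$-integral, so $v_{\mathfrak{q}}(j_E)\geq 0$; if $v_{\mathfrak{q}}(\mu)>0$ the numerator factors become $\mathfrak{q}$-units, giving $v_{\mathfrak{q}}(j_E)=-v_{\mathfrak{q}}(\mu)<0$; if $v_{\mathfrak{q}}(\mu)<0$ each numerator factor inherits $v_{\mathfrak{q}}(\mu)$, producing $v_{\mathfrak{q}}(j_E)=2v_{\mathfrak{q}}(\mu)<0$ (respectively $3v_{\mathfrak{q}}(\mu)<0$). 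Hence $v_{\mathfrak{q}}(j_E)\geq 0\iff v_{\mathfrak{q}}(\mu)=0$, and quantifying over all $\mathfrak{q}\notin S$ gives exactly $\mu\in\mathcal{O}_S^*$.

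I do not foresee any serious obstacle; the whole argument is a clean unwinding of Lemma \ref{torsion} together with the local computation above. The only points to handle carefully are that Lemma \ref{torsion}'s parametrisation is up to $\bar{K}$-isomorphism (so the left-hand set of Lemma \ref{elliptic} is genuinely well-defined, and ``potentially good reduction'' descends because it is detected by $v_{\mathfrak{q}}(j_E)$, a $\bar{K}$-isomorphism invariant), and that the excluded value $\mu=0$ on the right matches the excluded values $\lambda=4$ and $\lambda=27$ on the left. Both parts of the lemma follow from the same skeleton, differing only in the explicit $j$-invariant formula and in whether the set $S$ contains the primes above $2$ or above $3$.
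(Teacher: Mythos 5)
Your proof is correct and follows essentially the same route as the paper: restrict Lemma \ref{torsion}'s bijection using the $j$-invariant criterion for potentially good reduction together with the explicit formula for $j_E$ in terms of $\mu$. The only variation is that you establish $v_{\mathfrak{q}}(j_E)\geq 0\iff v_{\mathfrak{q}}(\mu)=0$ by a direct ultrametric case split, whereas the paper deduces $\mu,\mu^{-1}\in\mathcal{O}_S$ from monic-polynomial integrality considerations; your version is a touch cleaner and gives both directions of the equivalence in one pass.
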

\begin{proof}
\begin{enumerate}
    \item Let $E$ be an elliptic curve with a $K$-torsion point of order $2$ with potentially 
    good reduction outside $S$. By Lemma \ref{torsion} (i) $E$ has a model
    $$E: Y^2=X^3+aX^2+bX$$
    with $\lambda:= \frac{a^2}{b}$ and $\mu:=\lambda-4=\frac{a^2-4b}{b}$.
    Thus
    \begin{equation}\label{jinvariant}
    j_{E}=2^8\frac{(\lambda-3)^3}{\lambda-4}=2^8\frac{(\mu+1)^3}{\mu}.
\end{equation}
Potentially good reduction outside $S$ implies that $v_{\fq}(j_E)\geq0$ for all $\fq \notin S$, in other words $j_{E} \in  \mathcal{O}_{S}$. Consequently both $\lambda$ and $\mu$ satisfy monic equations with coefficients in $\mathcal{O}_{S}$. Thus, we can conclude that $\lambda, \mu \in \mathcal{O}_{S}$. Moreover, by writing $j_{E}$ in terms of $\mu^{-1}$ and using the same reasoning, we deduce that also $\mu^{-1} \in \mathcal{O}_{S}$ and hence $\mu \in \mathcal{O}_{S}^*$ and so the assignment $E \longmapsto \mu$ is well-defined. 

Note that every $\mu \in \cO^*_S$ can be written in the form $\mu = \frac{a^2}{b}-4$ for some $a, b \in K$, thus coming from an elliptic curve with $2$-torsion. Moreover, $\mu \in \cO^*_S$ implies $j_E \in \cO_S$, thus this represents a curve with potentially good reduction outside $S$, proving surjectivity.

Injectivity follows by noting that $\mu=\mu'$ implies $j_E=j_{E'}$ which gives $E \simeq E'$.  
    \item Let $E$ be an elliptic curve with a $K$-torsion point of order $3$ with potentially 
    good reduction outside $S$. By Lemma \ref{torsion} (ii) $E$ has a model
    $$E: Y^2 +cXY +dY=X^3$$
    with $\lambda:=\frac{c^3}{d} $ and $\mu=\lambda-27=\frac{c^3-27d}{d}$
    Thus, 
    \begin{equation}\label{jinvariant2}
    j_{E}=\frac{\lambda(\lambda-24)^3}{\lambda-27}=\frac{(\mu+27)(\mu+3)^3}{\mu}.
\end{equation}

 Same arguments as in the proof of (i) give $j_E, \lambda \in \cO_S$ and $\mu \in \cO^*_S$, giving $E \longmapsto \mu$ is well-defined.
 
  Surjectivity and injectivity follow exactly as in (i).
\end{enumerate}
\end{proof}
We say that a fractional ideal is an \textit{$S$-ideal} if its decomposition into primes contains only primes in $S$.
\begin{lemma}\label{mainelliptic}
Let $K$ be a number field and $S$ a set of finite primes of $K$. Let $E/K$ be an elliptic curve with good reduction outside $S$.
\begin{enumerate}
    \item Suppose $S$ contains the primes above $2$ and $E$ has a $K$-torsion point of order $2$. Let $(\lambda, \mu) \in \mathcal{O}_S \times \mathcal{O}_S^*$ correspond to $E$ as in Lemma \ref{elliptic} (i) and therefore satisfy $\lambda-\mu=4$.
   Then $(\lambda)\cO_K=I^2J$ where $I,J$ are fractional ideals with $J$ being an $S$-ideal.
    \item Suppose $S$ contains the primes above $3$ and $E$ has a $K$-torsion point of order $3$. Let $(\lambda, \mu) \in \mathcal{O}_S \times \mathcal{O}_S^*$ correspond to $E$ as in Lemma \ref{elliptic} (ii) and therefore satisfy $\lambda-\mu=27$.
   Then $(\lambda)\cO_K=I^3J$ where $I,J$ are fractional ideals with $J$ being an $S$-ideal.
\end{enumerate}
\end{lemma}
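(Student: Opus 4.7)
The plan is to prove that $v_{\mathfrak{q}}(\lambda)$ is even in case (i) and divisible by $3$ in case (ii) for every prime $\mathfrak{q}$ of $K$ with $\mathfrak{q} \notin S$. Granting this divisibility, one defines $I := \prod_{\mathfrak{q} \notin S} \mathfrak{q}^{v_{\mathfrak{q}}(\lambda)/n}$ with $n=2$ or $3$, and lets $J$ be the $S$-part of $(\lambda)\mathcal{O}_K$; the decomposition $(\lambda)\mathcal{O}_K = I^n J$ with $J$ an $S$-ideal follows at once.

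The central ingredient is the standard fact that any two Weierstrass models of $E/K$ are related by an admissible change of variables $(u,r,s,t) \in K^* \times K^3$ under which the discriminant transforms as $\Delta \mapsto u^{-12}\Delta$; in particular $v_{\mathfrak{q}}(\Delta) \bmod 12$ is an invariant of $E$. Good reduction at $\mathfrak{q} \notin S$ supplies a model with $v_{\mathfrak{q}}(\Delta_{\min}) = 0$, hence $12 \mid v_{\mathfrak{q}}(\Delta_E)$ for the specific models $Y^2 = X^3 + aX^2 + bX$ (case (i)) and $Y^2 + cXY + dY = X^3$ (case (ii)) supplied by Lemma \ref{torsion}. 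I would then compute these discriminants in the compact forms $\Delta_E = 2^4 b^3\mu$ and $\Delta_E = d^4\mu$, using the identities $a^2-4b = b\mu$ and $c^3-27d = d\mu$. Because $\mu \in \mathcal{O}_S^*$ yields $v_{\mathfrak{q}}(\mu) = 0$, and in case (i) the inclusion of the primes above $2$ in $S$ makes $2^4$ a unit at $\mathfrak{q}$, these simplify to $v_{\mathfrak{q}}(\Delta_E) = 3v_{\mathfrak{q}}(b)$ and $v_{\mathfrak{q}}(\Delta_E) = 4v_{\mathfrak{q}}(d)$ respectively. Combining with $12 \mid v_{\mathfrak{q}}(\Delta_E)$ forces $4 \mid v_{\mathfrak{q}}(b)$ in case (i) and $3 \mid v_{\mathfrak{q}}(d)$ in case (ii), and the divisibility properties of $v_{\mathfrak{q}}(\lambda) = 2v_{\mathfrak{q}}(a) - v_{\mathfrak{q}}(b)$ and $v_{\mathfrak{q}}(\lambda) = 3v_{\mathfrak{q}}(c) - v_{\mathfrak{q}}(d)$ follow immediately.

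The main obstacle is the clean application of the mod $12$ invariance of $v_{\mathfrak{q}}(\Delta)$ to our non-minimal Weierstrass models: this is classical but must be invoked carefully, since the change of variables relating the given model to a $\mathfrak{q}$-minimal one a priori lives in $K_{\mathfrak{q}}^*$ rather than $K^*$, although this distinction does not affect the divisibility conclusion. As a concrete alternative one could construct a model in the required torsion-at-origin form with unit discriminant at $\mathfrak{q}$ by hand: in case (i) by translating the $2$-torsion point to the origin and completing the square in $Y$, both admissible over $\mathcal{O}_{\mathfrak{q}}$ since $\mathfrak{q} \nmid 2$; in case (ii) by following the translation with the shift $Y \mapsto Y + (a_4/a_3)X$, whose integrality at $\mathfrak{q}$ is forced by the $3$-torsion identity $a_4^2 + a_1 a_3 a_4 = a_2 a_3^2$. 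Either route reaches the same arithmetic conclusion and completes the proof.
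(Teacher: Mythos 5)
Your proof is correct, and it is genuinely slicker than the paper's. Both arguments hinge on the observation that good reduction at $\mathfrak{q}\notin S$ forces $12\mid v_{\mathfrak{q}}(\Delta_E)$ (because a $\mathfrak{q}$-minimal model has discriminant valuation zero and any two Weierstrass models have discriminants differing by a twelfth power). The paper, however, invokes both $\mathfrak{q}^{12k}\,\|\,\Delta_E$ \emph{and} $\mathfrak{q}^{4k}\mid c_4$ and then runs a case analysis to pin down $v_{\mathfrak{q}}(a)$ and $v_{\mathfrak{q}}(b)$ separately (establishing $\mathfrak{q}^{2k}\mid a$ and $\mathfrak{q}^{4k}\,\|\,b$), and similarly with $c,d$ in case (ii). Your key simplification is the identity $\Delta_E = 2^4 b^2(a^2-4b) = 2^4 b^3\mu$, resp.\ $\Delta_E = d^3(c^3-27d)=d^4\mu$, which expresses the discriminant as a pure power of $b$ (resp.\ $d$) times an $S$-unit; since $v_{\mathfrak{q}}(\mu)=0$ and $v_{\mathfrak{q}}(2)=0$ for $\mathfrak{q}\notin S$, you read off $v_{\mathfrak{q}}(\Delta_E)=3v_{\mathfrak{q}}(b)$ (resp.\ $4v_{\mathfrak{q}}(d)$) directly, giving $4\mid v_{\mathfrak{q}}(b)$ (resp.\ $3\mid v_{\mathfrak{q}}(d)$) and hence $2\mid v_{\mathfrak{q}}(\lambda)$ (resp.\ $3\mid v_{\mathfrak{q}}(\lambda)$) with no case analysis and no need for $c_4$ at all. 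Your route also avoids having to worry whether the coefficients $a,b$ are $\mathfrak{q}$-integral, since you work purely with valuations and allow $I$ to be fractional. The concern you raise about the change of variables living in $K_{\mathfrak{q}}^*$ rather than $K^*$ is, as you say, immaterial: the only thing extracted is $v_{\mathfrak{q}}(\Delta_E)\bmod 12$, which is a local invariant. The alternative hands-on route you sketch at the end is unnecessary given that the standard mod-$12$ argument is sound, but it is a valid fallback.
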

\begin{proof}
\begin{enumerate}
    \item
    By Lemma \ref{torsion} (i) $E$ has a model
    $$E: Y^2=X^3+aX^2+bX$$ with $\Delta_{E}=2^{4}b^2(a^2-4b)$ and $c_4=2^4(a^2-3b)$.
    Good reduction outside $S$ implies that for a $\mathfrak{q}\notin S$ we have that $v_{\fq}(\Delta_{\text{min}})=0$ (where $\Delta_{\text{min}}$ is the minimal discriminant of $E$ viewed over the local field $K_{\fq}$). Standard results about the minimal discriminant of an elliptic curve (e.g. \cite[Ch. VII.1.]{Silv}) give $\fq^{12k} || \Delta_E$ and $\fq^{4k} | c_4$. As $\mathfrak{q}$ is an odd prime, this yields to the following two relations $$\fq^{12k} || b^2(a^2-4b), \quad \fq^{4k} | (a^2-3b).$$
    
    Now, we claim that $\fq^{4k} | b$. Suppose not, by the first relation it follows that $\fq^{4k}|(a^2-4b)$ and combining this with the second relation we get that $\fq^{4k} | (a^2-3b) - (a^2-4b)= b$, a contradiction. Hence, $v_{\fq}(b):=t\geq 4k$. Observe that the second relation implies $v_{\fq}(a^2-3b):=s\geq 4k$. By the first relation 
    $$ 12k=v_{\fq}(b^2(a^2-4b)) = 2t+ v_{\fq}(a^2-3b-b) \geq 2t +\min(s,t) \geq 2t + 4k.
    $$
    This implies $t\leq 4k$, which gives $t=4k$ (as we have already shown $t\geq 4k$). Moreover the second relation implies $\fq^{4k}| a^2$.
   Therefore, we can conclude $\mathfrak{q}^{2k}|a$ and  $\mathfrak{q}^{4k}||b$.
Hence, $$(a)\mathcal{O}_K=\displaystyle\prod_{\mathfrak{q}\notin S_K}\mathfrak{q}^{2k_{\mathfrak{q}}+l_{\mathfrak{q}}} \prod_{\mathfrak{P}\in S_K}\mathfrak{P}^{a_\mathfrak{P}}, \:  (b)\mathcal{O}_K=\displaystyle\prod_{\mathfrak{q}\notin S_K}\mathfrak{q}^{4k_{\mathfrak{q}}} \prod_{\mathfrak{P}\in S_K}\mathfrak{P}^{b_\mathfrak{P}}$$ for some positive integers $a_{\mathfrak{P}}, b_{\mathfrak{P}},k_{\mathfrak{q}}, l_{\mathfrak{q}}$. Thus, as $\lambda=\frac{a^2}{b}$, we get
$$(\lambda)\mathcal{O}_K=I^2J \text{, where } I:=\displaystyle\prod_{\mathfrak{q}\notin S_K}\mathfrak{q}^{l_{\mathfrak{q}}}, \: J:=\prod_{\mathfrak{P}\in S_K}\mathfrak{P}^{2a_\mathfrak{P}-b_\mathfrak{P}}$$
which makes $J$ an $S$-ideal. 
\item  By Lemma \ref{torsion} (ii) $E$ has a model
    $$E: Y^2 +cXY +dY=X^3$$ with $\Delta_{E}=d^3(c^3-27d)$ and $c_4=c(c^3-24d)$.
    As before, good reduction outside $S$ implies that for a $\mathfrak{q}\notin S$ we have that $v_{\fq}(\Delta_{\text{min}})=0$. So $\fq^{12k} || \Delta_E$ and $\fq^{4k} | c_4$ for some positive integer $k$. This yields to the following two relations
    $$\fq^{12k} || d^3(c^3-27d), \quad \fq^{4k}|c(c^3-24d).
    $$
    Now, we claim that $\fq^k | c$. Suppose not, by the second relation it follows that $\fq^{3k}|(c^3-24d)$. If $\fq^{3k} | d $, we get $\fq^{3k} | c^3$, which in turn gives $\fq^k | c$, a contradiction. So, $\fq^{3k} \nmid d $. The first relation then gives $\fq^{3k}|(c^3-27d)$. It follows that $\fq^{3k}|(c^3-24d)-(c^3-27d)=3d$. Since $\fq \notin S = \{\text{primes above }3 \}$, we get $\fq^{3k}| d$, another contradiction. As we exhausted all the possibilities, we can conclude that $\fq^k | c$. In particular, this gives $\fq^{3k}|c^3$. \\
    Secondly, we claim that $\fq^{3k}|d$. Suppose not, by the first relation we get $\fq^{3k}|(c^3-27d)$ and using $\fq^{3k}|c^3$ we get that $\fq^{3k}|d$, a contradiction. Hence $v_{\fq}(d):=t\geq 3k.$ In particular, so far we can deduce that $\fq^{3k}|(c^3-27d)$. By the first relation
    $$ 12k=v_{\fq}(d^3(c^3-27d))=3t+v_{\fq}(c^3-27d) \geq 3t + 3k
    $$
    This implies $t\leq 3k$, which gives $t=3k$ (as we have already shown $t\geq 3k$).
    Therefore, we can conclude $\mathfrak{q}^{k}|c$ and $\mathfrak{q}^{3k}||d$.
Hence, $$(c)\mathcal{O}_K=\displaystyle\prod_{\mathfrak{q}\notin S_K}\mathfrak{q}^{k_{\mathfrak{q}}+l_{\mathfrak{q}}} \prod_{\mathfrak{P}\in S_K}\mathfrak{P}^{c_\mathfrak{P}}, \:  (d)\mathcal{O}_K=\displaystyle\prod_{\mathfrak{q}\notin S_K}\mathfrak{q}^{3k_{\mathfrak{q}}} \prod_{\mathfrak{P}\in S_K}\mathfrak{P}^{d_\mathfrak{P}}$$ for some positive integers $c_{\mathfrak{P}}, d_{\mathfrak{P}},k_{\mathfrak{q}}, l_{\mathfrak{q}}$. Thus, as $\lambda=\frac{c^3}{d}$, we get
$$(\lambda)\mathcal{O}_K=I^3J \text{, where } I:=\displaystyle\prod_{\mathfrak{q}\notin S_K}\mathfrak{q}^{l_{\mathfrak{q}}}, \: J:=\prod_{\mathfrak{P}\in S_K}\mathfrak{P}^{3c_\mathfrak{P}-d_\mathfrak{P}}$$
which makes $J$ an $S$-ideal. 
\end{enumerate}
\end{proof}
\subsection{Modularity Results}\label{modsection}
We now carefully formulate modularity in the context of a totally real field. Let us first recall that given $K$ a totally real number field, $G_K$ its absolute Galois group and $E$ an elliptic curve over $K$, we say that $E$ is \textit{modular} if there exists a Hilbert cuspidal eigenform $\mathfrak{f}$ over $K$ of parallel weight $2$, with rational Hecke
eigenvalues, such that the Hasse–Weil L-function of $E$ is equal to the Hecke L-function
of $\mathfrak{f}$.  A more conceptual way to phrase this is that there is an isomorphism of
compatible systems of Galois representations
$$\rho_{E,p} \simeq \rho_{\mathfrak{f},p}$$
where the left-hand side is the Galois representation arising from the action of
$G_K$ on the $p$-adic Tate module $T_p(E)$, while the right-hand side is the Galois
representation associated to $\mathfrak{f}$. A comprehensive definition of \textit{Hilbert modular forms} and their associated representation can be found, for example in Wiles' \cite{AW}. In this paper we are mainly interested in the mod $p$ Galois representations and we denote their isomorphism by $\overline{\rho}_{E,p} \sim \overline{\rho}_{\mathfrak{f},p}$. 
\newpage
We need the following theorem proved by Freitas, Hung and Siksek in \cite{SHN}:
\begin{theorem} \label{modularity}
Let $K$ be a totally real field. There are at
most finitely many $\bar{K}$- isomorphism classes of non-modular elliptic curves $E$ over $K$. Moreover, if $K$ is real
quadratic, then all elliptic curves over $K$ are modular.
\end{theorem}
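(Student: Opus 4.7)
The plan is to deduce modularity from known modularity lifting theorems applied at small primes, combined with finiteness of rational points on certain modular curves. Fix an elliptic curve $E$ over a totally real field $K$. For $p \in \{3,5,7\}$ there is a modularity lifting theorem (of Wiles--Taylor--Kisin--Skinner--Wiles type, in the form available for totally real fields) which asserts that if the residual representation $\rep_{E,p}$ is irreducible, modular, and remains absolutely irreducible when restricted to $G_{K(\zeta_p)}$, then $E$ itself is modular. The strategy is therefore to show that the set of $E$ for which this hypothesis fails at each of $p=3,5,7$ simultaneously is finite up to $\bar{K}$-isomorphism.

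The residual modularity input is free at $p=3$: by Langlands--Tunnell together with solvable base change, every odd, irreducible two-dimensional mod $3$ representation of $G_K$ with suitable determinant is modular. To upgrade this to $p=5$ and $p=7$ one uses the classical Wiles/Manoharmayum ``$3$--$5$'' and ``$3$--$7$'' switches: given an elliptic curve $E/K$ for which $\rep_{E,5}$ (respectively $\rep_{E,7}$) has big image but $\rep_{E,3}$ does not, one constructs an auxiliary elliptic curve $E'/K$ with $\rep_{E',5}\simeq \rep_{E,5}$ (resp.\ at $7$) and with $\rep_{E',3}$ having big image; modularity of $E'$ via the $p=3$ argument then transfers to $E$. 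So the potentially non-modular $E/K$ are precisely those for which $\rep_{E,p}$ has either reducible image or a small (dihedral, $A_4$, $S_4$, $A_5$, or exceptional) projective image for each of $p=3,5,7$, and for which additionally the auxiliary switch curves fail to exist.

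Such $E$ give rise to $K$-rational points on a finite list of modular curves: the reducible cases produce points on $X_0(p)$ and products of isogeny / level structure curves like $X_0(15)$, $X_0(21)$, $X_0(35)$, while the exceptional projective image cases correspond to points on covers parameterising elliptic curves with prescribed image. Each such curve has genus $\ge 2$ when one intersects the various constraints across $p=3,5,7$, so by Faltings' theorem it has only finitely many $K$-points, yielding finiteness of the $\bar{K}$-isomorphism classes of non-modular $E/K$. The main obstacle is carrying out this casework uniformly, in particular controlling the reducible/reducible/reducible combination, where one needs effective information about the Mordell--Weil groups of low-genus modular curves like $X_0(15)$ and $X_0(21)$ over totally real fields.

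For the second statement, one works over a real quadratic $K$ and makes the finite list from the previous paragraph explicit. Using results of Mazur, Kamienny, Kenku, Momose and later Bruin--Najman on quadratic points on modular curves, one shows that the finitely many exceptional classes over real quadratic fields either have complex multiplication, and are therefore modular by the Shimura--Taniyama theory of CM abelian varieties, or coincide with known modular elliptic curves listed in the LMFDB / Hilbert modular form tables. The hardest step here is the explicit enumeration and verification over all real quadratic $K$, rather than one $K$ at a time; this is what forces the use of the uniform classification of quadratic points on the relevant modular curves.
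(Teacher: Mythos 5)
This theorem is not proved in the paper: it is imported as a black box from Freitas--Le Hung--Siksek \cite{SHN} (and the cubic analogue from \cite{DNS}). So there is no in-paper argument to compare against; what you have written is a reconstruction of the proof in the cited reference. Your high-level outline is in the right spirit -- modularity lifting at $p\in\{3,5,7\}$ with Langlands--Tunnell supplying residual modularity at $3$ and the $3$--$5$, $3$--$7$ switches handling $5$ and $7$; then reduction of the remaining curves to $K$-points on finitely many modular curves, with Faltings giving finiteness for general totally real $K$ and explicit quadratic-point computations closing the real quadratic case.

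That said, a few imprecisions in the sketch would need to be fixed before it is a proof. First, the relevant modular curves are not $X_0(15)$, $X_0(21)$, $X_0(35)$ and their ilk; the failure of the lifting hypothesis at each $p$ puts $\overline{\rho}_{E,p}$ inside a Borel, a normalizer of split Cartan, or a normalizer of non-split Cartan, and one must work with the fibre products $X(u_3,u_5,u_7)$ over $X(1)$ for all such combinations of level structures $u_p$ -- the exceptional ($A_4$, $S_4$, $A_5$) projective images are not the main case here and are dispatched separately. Second, ``big image'' is the wrong dichotomy: the lifting theorems require $\overline{\rho}_{E,p}|_{G_{K(\zeta_p)}}$ to be absolutely irreducible, which is a strictly weaker condition than surjectivity, and handling its failure (especially the $p=3$, $\sqrt{-3}\in K$ case) is where the genuinely delicate casework in \cite{SHN} lives. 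Third, the second assertion (all $E$ over real quadratic $K$ are modular) is not obtained by enumerating finitely many exceptions per field and checking them against tables; rather, one must determine all quadratic points on the handful of small-genus curves that survive, uniformly over all real quadratic fields, which is an explicit Mordell--Weil sieve / Chabauty computation. Finally, the general totally real finiteness statement is ineffective because it uses Faltings -- this is precisely why the constant $A_K$ in Lemma \ref{modularity3} of the paper is ineffective, as the paper's own remark notes, and any sketch of this theorem should flag that.
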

Furthermore Derickx, Najman and Siksek have recently proved in \cite{DNS}:
\begin{theorem} \label{modularity2}
Let $K$ be a totally real cubic number field and $E$ be an elliptic curve over $K$. Then $E$ is
modular.
\end{theorem}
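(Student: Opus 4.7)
The plan is to build on Theorem \ref{modularity}, which already reduces the task to a finite list of $\bar K$-isomorphism classes of potentially non-modular curves, and then eliminate the survivors by analysing rational points on modular curves. Given $E/K$ with $K$ totally real cubic, I would invoke a Kisin / Breuil--Conrad--Diamond--Taylor style modularity lifting theorem (in the totally real form used by Freitas--Le Hung--Siksek) for the residual representation $\rep_{E,p}$ at each of the small primes $p\in\{3,5,7\}$. Concretely, if $\rep_{E,p}$ is absolutely irreducible and odd, its restriction to $G_{K(\zeta_p)}$ remains absolutely irreducible, and $\rep_{E,p}$ is itself known to be modular, then $E$ is modular. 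Residual modularity is provided by Langlands--Tunnell at $p=3$, and then transferred to $p=5,7$ via a cubic analogue of Wiles's $3$--$5$ / $3$--$7$ switch (building an auxiliary curve whose mod $3$ representation agrees with the mod $5$ or mod $7$ representation of $E$).

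For each $p$, the elliptic curves to which the lifting theorem fails to apply are cut out by $K$-rational points on a short list of modular curves, namely $X_0(15)$, $X_0(21)$, and the fibre products $X_{\mathrm{s}}(p)\times_{j} X_{\mathrm{s}}(\ell)$ recording simultaneous smallness of the mod-$p$ and mod-$\ell$ projective images. A curve $E$ that defeats the whole strategy must define a $K$-point on \emph{every} such exceptional curve at once, so the core task is to classify cubic points on these low-genus modular curves. I would compute (or cite) the Mordell--Weil group of each relevant Jacobian over $\mathbb{Q}$ and use Chabauty and Mordell--Weil sieve arguments on the symmetric cube of each curve to show that every non-cuspidal cubic point falls into one of two benign families: either $j(E)\in\mathbb{Q}$, so $E$ is a $K$-form of a rational elliptic curve and therefore modular by Wiles--BCDT together with cyclic base change, or $E$ has complex multiplication, in which case modularity follows from the theory of CM Hecke characters.

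The main obstacle is the explicit point-classification step on $X_0(21)$ and the fibre products: cubic Chabauty on symmetric powers is delicate when the Jacobian rank over $\mathbb{Q}$ is positive, and one has to rule out sporadic isolated cubic points using a carefully assembled sieve at auxiliary primes of good reduction. A secondary technical difficulty is running modularity lifting at $p=3$ when $\rep_{E,3}\vert_{G_{K(\zeta_3)}}$ is reducible but not semisimple, where one must appeal to the sharper ordinary / potentially Barsotti--Tate lifting theorems of Kisin and Thorne; this is what makes the $p=3$ input more fragile than the $p=5,7$ steps. Once the cubic-point classification is complete, Theorem \ref{modularity} guarantees no further exceptional classes are missed, and the proof closes.
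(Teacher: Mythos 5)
The paper does not prove this statement; it cites it verbatim from Derickx, Najman and Siksek \cite{DNS}, so there is no in-paper argument to compare against. Your sketch does broadly track the strategy of the cited work: residual modularity from Langlands--Tunnell at $p=3$, a $3$--$5$--$7$ prime-switching argument in the style of Wiles and Freitas--Le Hung--Siksek, modularity lifting theorems in the potentially diagonalizable / potentially Barsotti--Tate setting, and a reduction to the classification of low-degree points on a finite list of modular curves (including fibre products recording simultaneously small mod-$p$ and mod-$\ell$ image), resolved by Mordell--Weil plus Chabauty-type and sieve techniques on symmetric powers.

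There is, however, a genuine structural gap in how you frame the argument. You open by saying you will ``build on Theorem~\ref{modularity}, which already reduces the task to a finite list of $\bar K$-isomorphism classes of potentially non-modular curves, and then eliminate the survivors.'' Theorem~\ref{modularity} (the Freitas--Le Hung--Siksek finiteness statement) is \emph{ineffective}: as the paper itself notes, the constant is ineffective because the proof ultimately rests on Faltings's theorem. You therefore have no way to produce, let alone enumerate and eliminate, that finite list. The actual logic of the cubic modularity proof runs in the opposite direction: one shows directly, via the lifting theorems, that any elliptic curve over $K$ escaping the lifting criteria at all of $p=3,5,7$ must give rise to a $K$-rational point on one of finitely many \emph{explicit} modular curves (and fibre products), and one then classifies those points unconditionally. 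The finiteness of Theorem~\ref{modularity} never enters as an input; it is a by-product of the same circle of ideas. If you keep the ``eliminate the survivors of Theorem~\ref{modularity}'' framing, the proof does not get off the ground. Replace it with the direct reduction to explicit modular curves and the rest of your outline is in the right spirit, though of course the Chabauty/sieve classification of cubic points on those curves is where all the real work lies.
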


\subsection{Irreductibility of\texorpdfstring{$\mod{p}$}{TEXT} representations of elliptic curves }\label{irreducibility}
We need the following theorem in the level lowering step of our proof. This was proved in \cite[Theorem 2]{SN1} and it is derived from the work of David and Momose who in turn built on Merel's Uniform Boundedness Theorem.
\begin{theorem} \label{irred}
Let $K$ be a Galois totally real field. There is an effective constant $C_K$, depending only on $K$, such that the following holds. If $p>C_K$ is prime, and $E$
is an elliptic curve over $K$ which has multiplicative reduction at all $\mathfrak{q}|p$, then $\overline{\rho}_{E,p}$ is irreducible.
\end{theorem}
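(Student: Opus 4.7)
The plan is to argue by contradiction. Suppose $\overline{\rho}_{E,p}$ is reducible; then $E$ admits a $K$-rational $p$-isogeny, and after choosing a $G_K$-stable line one obtains an \emph{isogeny character} $\theta: G_K \to \mathbb{F}_p^\times$ so that $\overline{\rho}_{E,p}$ is upper triangular with diagonal $(\theta,\, \chi_p \theta^{-1})$, where $\chi_p$ is the mod $p$ cyclotomic character. The strategy is to use the multiplicative reduction hypothesis at every $\fq \mid p$ to pin down the local behaviour of $\theta$ at such primes, and then feed this into the global classification of David and Momose (which ultimately rests on Merel's uniform boundedness theorem) to force $\theta$ to have order bounded purely in terms of $K$, contradicting its existence once $p$ exceeds some effective $C_K$.

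First I would carry out the local analysis at each $\fq \mid p$. Multiplicative reduction tells us that, up to an unramified quadratic twist, the base change $E/K_\fq$ is analytically a Tate curve with parameter $q$ satisfying $v_\fq(q) = -v_\fq(j_E) > 0$. Standard Tate curve theory then provides a canonical $G_\fq$-stable filtration of $E[p]$ whose graded inertia characters are the trivial character and $\chi_p|_{I_\fq}$. Consequently the unordered pair $\{\theta|_{I_\fq},\, \chi_p\theta^{-1}|_{I_\fq}\}$ equals $\{1,\, \chi_p|_{I_\fq}\}$; that is, $\theta$ is of \emph{cyclotomic type} at every prime above $p$, which is precisely the hypothesis needed for the global step.

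For the global step I would invoke the David--Momose classification in the form sharpened in \cite[Theorem 2]{SN1}: for $K$ Galois totally real and $p$ sufficiently large, any isogeny character that is of cyclotomic type at all primes above $p$ must, after a controlled twelfth power and a twist by a character of bounded conductor, factor through the ideal class group of $K$. Hence $\theta^{12 h_K}$ is everywhere unramified and of order bounded purely in terms of $K$, leaving only an effectively enumerable finite list of possibilities depending on $K$ alone. For $p$ larger than an effective constant $C_K$ no such character can exist, giving the required contradiction. The main obstacle here is not the local analysis (which is routine Tate curve theory) but the David--Momose step itself: its proof applies Merel's uniform bound to degrees of CM points on $X_0(p)$ and excludes the residual characters via a delicate analysis of their conductors and their behaviour under the various complex conjugations of $K$ — it is at this stage that the Galois-totally-real hypothesis is indispensable and that the effectivity of $C_K$ is finally obtained.
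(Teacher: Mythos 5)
The paper does not actually prove this statement; it is quoted verbatim from \cite[Theorem 2]{SN1} with a one-sentence attribution to David, Momose and Merel, so there is no in-paper argument to compare against. Your sketch does follow the broad architecture of the proof in that reference (pass to the isogeny character $\theta$, use the Tate parametrisation at each $\mathfrak{q}\mid p$ to pin $\theta|_{I_\mathfrak{q}}$ to $\{1,\chi_p|_{I_\mathfrak{q}}\}$, then run a David--Momose-type global argument resting on Merel), and in that sense the outline is faithful.

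Two things are off, though, one formal and one substantive. Formally, your global step is circular as written: you invoke ``the David--Momose classification in the form sharpened in \cite[Theorem 2]{SN1},'' but \cite[Theorem 2]{SN1} \emph{is} the statement under proof, not an input to it; what you need to cite are the underlying theorems of David and of Momose, together with Merel's uniform boundedness. Substantively, the assertion ``Hence $\theta^{12h_K}$ is everywhere unramified and of order bounded purely in terms of $K$'' is wrong. Raising $\theta$ to the twelfth power removes the ramification from additive reduction away from $p$, but at each $\mathfrak{q}\mid p$ one has $\theta^{12}|_{I_\mathfrak{q}} = \chi_p^{12a_\mathfrak{q}}$ with $a_\mathfrak{q}\in\{0,1\}$ coming from your own Tate-curve dichotomy, and taking an $h_K$-th power does nothing to erase a nontrivial power of $\chi_p$. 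What the David--Momose machinery actually produces is that $\theta^{12}\chi_p^{-12a}$, for a suitable integer $a$ governed by the signature $(a_\mathfrak{q})_{\mathfrak{q}\mid p}$, is an unramified character of bounded order; the Galois hypothesis on $K$ controls the possible signatures, the comparison at well-chosen auxiliary primes (where Merel enters) rules out most of them, and the remaining cases are dispatched using the fact that an elliptic curve over a totally real field cannot have CM defined over that field. Eliding the twist by the power of $\chi_p$ collapses exactly the part of the argument that carries the content, so the sketch as stated has a genuine gap at the global step.
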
 
\begin{remark}
The above theorem is also true for any totally real field by replacing $K$ by its Galois closure.
\end{remark}
\subsection{Level lowering}\label{Hilbert}
We present a level lowering result proved by Freitas and Siksek in \cite{SN} derived from the work of Fujira \cite{19}, Jarvis \cite{25}, and Rajaei \cite{35}.
Let $K$ be a totally real field and $E/K$ be an elliptic curve of conductor $\mathcal{N}_E$. Let $p$ be a rational prime. Define
the following quantities:
\begin{equation}\label{condNM}
    \mathcal{M}_p=\prod_{\substack{\mathfrak{q}||\mathcal{N}_E \\ p|v_{\mathfrak{q}}(\Delta_\mathfrak{q})}}\mathfrak{q}, \text{ and } \mathcal{N}_p=\frac{\mathcal{N}_E}{\mathcal{M}_p}
\end{equation}
where $\Delta_{\mathfrak{q}}$ is the minimal discriminant of a local minimal model for $E$ at $\mathfrak{q}$. For a Hilbert eigenform $\mathfrak{f}$ over $K$, we write $\mathbb{Q}_{\mathfrak{f}}$ for the field generated by its eigenvalues.
\begin{theorem} \label{ll}
With the notation above, suppose the following statements hold:
\begin{enumerate}
    \item $p \geq 5$, the ramification index $e(\mathfrak{q}/p)<p-1$ for all $\mathfrak{q}|p$, and $\mathbb{Q}(\zeta_p)^+\nsubseteq K$,
    \item E is modular,
    \item $\overline{\rho}_{E,p}$ is irreducible,
    \item $E$ is semistable at all $\mathfrak{q}|p$,
    \item $p|v_{\mathfrak{q}}(\Delta_{\mathfrak{q}}) $ for all $\mathfrak{q}|p$.
\end{enumerate}
Then, there is a Hilbert eigenform $\mathfrak{f}$ of parallel weight $2$ that is new at level $\mathcal{N}_p$ and
some prime $\varpi$ of $\mathbb{Q}_{\mathfrak{f}}$ such that $\varpi|p$ and $\overline{\rho}_{E,p} \sim \overline{\rho}_{\mathfrak{f},\varpi}$.
\end{theorem}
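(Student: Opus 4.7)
The plan is to combine modularity of $E$ with a Ribet-style level lowering theorem for Hilbert modular forms over totally real fields, of the type developed by Fujiwara, Jarvis and Rajaei. First I would apply hypothesis (2) to produce a Hilbert eigenform $\ff_0$ of parallel weight $2$ and level $\cN_E$, with rational Hecke eigenvalues, satisfying $\rep_{E,p}\sim \rep_{\ff_0,\varpi}$ for some prime $\varpi\mid p$ of $\mathbb{Q}_{\ff_0}$. The objective is to transfer this residual representation to an eigenform new at the strictly smaller level $\cN_p = \cN_E/\cM_p$.

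The heart of the local input is to show that for every prime $\fq\mid \cM_p$ the representation $\rep_{E,p}$ is already unramified at $\fq$, so the level can be lowered there. By definition $\fq\mid\mid \cN_E$ and $p\mid v_\fq(\Delta_\fq)$, hence $E$ has multiplicative reduction at $\fq$. I would invoke the Tate uniformisation $E(\bar{K}_\fq)\cong \bar{K}_\fq^\times / q_E^{\mathbb{Z}}$, with $v_\fq(q_E)=v_\fq(\Delta_\fq)$, and observe that $E[p]$ admits a basis of the form $\{\zeta_p,\, q_E^{1/p}\}$; the hypothesis $p\mid v_\fq(q_E)$ forces $q_E^{1/p}\in K_\fq^{\mathrm{unr}}$, so $I_\fq$ acts trivially on $\rep_{E,p}$. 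The remaining hypotheses control the behaviour at primes above $p$: conditions (4) and (5) together with Tate uniformisation imply that $\rep_{E,p}\vert_{G_\fq}$ is finite flat at every $\fq\mid p$, while the conditions in (1) are technical assumptions ensuring that the Serre weight attached to $\rep_{E,p}$ is parallel $2$; in particular $e(\fq/p)<p-1$ is the Fontaine--Laffaille bound and $\mathbb{Q}(\zeta_p)^+\not\subseteq K$ excludes a pathological coincidence with the mod $p$ cyclotomic character.

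Having verified these local conditions, and using the irreducibility of $\rep_{E,p}$ supplied by (3), I would invoke the Ribet-type level lowering theorems for Hilbert modular forms from \cite{19,25,35} to strip, one prime at a time, every $\fq\mid \cM_p$ from the level of $\ff_0$. The output is a Hilbert eigenform $\ff$ of parallel weight $2$, new at level $\cN_p$, together with a prime $\varpi\mid p$ of $\mathbb{Q}_{\ff}$ such that $\rep_{E,p}\sim \rep_{\ff,\varpi}$.

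The main obstacle is precisely the Ribet-style level lowering input itself: transferring mod $p$ Galois representations between Hilbert eigenforms of different levels requires a delicate analysis of the mod $p$ geometry of quaternionic Shimura varieties, the Jacquet--Langlands correspondence between automorphic forms on different quaternion algebras, and the realisation of Galois representations in the cohomology of those varieties. This deep geometric machinery, imported from \cite{19,25,35}, is where essentially all of the technical difficulty is concentrated; the local Tate-uniformisation computation at primes $\fq\mid\cM_p$ justifying unramifiedness, and the Fontaine--Laffaille analysis at primes above $p$, are comparatively elementary by contrast.
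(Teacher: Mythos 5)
The paper does not give a proof of Theorem~\ref{ll} beyond the citation ``A proof is given in \cite[p.~8]{SN},'' so the proper comparison is with Freitas--Siksek's argument. Your outline is essentially the same: start from modularity to get an eigenform $\ff_0$ at level $\cN_E$, verify the local hypotheses of the Fujiwara/Jarvis/Rajaei level-lowering machinery, and strip the primes dividing $\cM_p$ one at a time. The Tate-uniformisation computation showing that $p\mid v_\fq(\Delta_\fq)$ forces $\rep_{E,p}$ to be unramified at $\fq\nmid p$ (and finite flat at $\fq\mid p$ via conditions (4)--(5) and the Fontaine--Laffaille bound $e(\fq/p)<p-1$) is exactly the standard local input, and your description of where the technical weight lies (Jacquet--Langlands, cohomology of quaternionic Shimura curves) is accurate. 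Two small points worth tightening: first, your opening sentence says the representation is ``unramified at every $\fq\mid\cM_p$,'' which is imprecise for $\fq\mid p$---the correct statement there is finite flatness, which you do address a few lines later, so this is a wording slip rather than a gap; second, the hypothesis $\mathbb{Q}(\zeta_p)^+\nsubseteq K$ is used in the level-lowering literature chiefly to guarantee that $\rep_{E,p}$ restricted to $G_{K(\zeta_p)}$ remains (absolutely) irreducible, a nondegeneracy hypothesis of Fujiwara-type theorems, rather than to control the Serre weight per se. Freitas--Siksek's proof also addresses a parity subtlety (introducing an auxiliary prime when $[K:\mathbb{Q}]$ is even and the level would otherwise be too small for the Jacquet--Langlands transfer) that your sketch leaves implicit, but this does not change the overall structure of the argument.
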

\begin{proof}
A proof is given in \cite[p. 8]{SN}.
\end{proof}
\subsection{Eichler-Shimura}\label{ESconj}
For totally real fields, modularity reads as follows.
\begin{conjecture}[Eichler-Shimura]
Let $K$ be a totally real field. Let $\mathfrak{f}$ be a Hilbert
newform of level $\mathcal{N}$ and parallel weight $2$, with rational eigenvalues. Then there is
an elliptic curve $E_{\mathfrak{f}}/K$ with conductor $\mathcal{N}$ having the same L-function as $\mathfrak{f}$.
\end{conjecture}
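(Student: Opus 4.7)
The plan is to realise $\mathfrak{f}$ geometrically by exhibiting its associated compatible system of $\ell$-adic Galois representations as the Tate module of an elliptic curve over $K$. First, by the work of Carayol, Taylor and Blasius--Rogawski, one attaches to the Hilbert newform $\mathfrak{f}$ a strictly compatible system $\{\rho_{\mathfrak{f},\ell}\}_\ell$ of $2$-dimensional $\ell$-adic Galois representations of $G_K$, unramified outside $\mathcal{N}\ell$ and with the prescribed local--global compatibility at every finite place. The assumption that $\mathfrak{f}$ has parallel weight $2$ and rational Hecke eigenvalues forces each $\rho_{\mathfrak{f},\ell}$ to take values (up to conjugation) in $\mathrm{GL}_2(\mathbb{Z}_\ell)$, to have cyclotomic determinant, and to have Hodge--Tate weights $(0,1)$ at every place above $\ell$; that is, the system has exactly the shape one expects from the Tate module of an elliptic curve of conductor $\mathcal{N}$ over $K$.

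To construct the elliptic curve itself I would invoke the Jacquet--Langlands correspondence. When $[K:\mathbb{Q}]$ is odd, or more generally when the automorphic representation $\pi_{\mathfrak{f}}$ is discrete series at some finite place, one transfers $\mathfrak{f}$ to an automorphic form on a quaternionic Shimura curve $X_D/K$ attached to a quaternion algebra $D$ ramified at all but one infinite place (and the correct finite places). The Jacobian of $X_D$ carries a natural Hecke action, and the rationality of the eigenvalues of $\mathfrak{f}$ cuts out a one-dimensional isogeny factor, which is the desired elliptic curve $E_{\mathfrak{f}}/K$. Faltings' isogeny theorem, applied to compare the two compatible systems of Tate modules via the equality of traces of Frobenius at almost all primes, would then match both the L-function and the conductor, completing this case.

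The hard part will be the remaining case, in which $[K:\mathbb{Q}]$ is even and $\mathfrak{f}$ is principal series at every finite place. Then there is no quaternionic Shimura variety of the correct dimension from which to cut out a one-dimensional abelian variety, and the compatible system $\{\rho_{\mathfrak{f},\ell}\}_\ell$ is not a priori known to be motivic. One would have to attempt a potential-modularity strategy in the spirit of Taylor, producing an elliptic curve over a finite solvable extension $L/K$ whose Tate module realises the restriction of the system, and then trying to descend it back to $K$. Arranging such a descent rigorously, so that both the field of definition and the conductor $\mathcal{N}$ are preserved, is precisely where no unconditional technique is presently available; this is why the statement is stated as a conjecture rather than a theorem, and will be used only as a formal input in what follows.
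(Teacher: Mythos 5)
You are being asked to ``prove'' something that the paper explicitly labels a \emph{conjecture}; the paper does not, and cannot, prove it. Your write-up is not a proof but a (quite accurate) account of what is known and where the genuine obstruction lies, and you say so yourself in the final paragraph. That is the right conclusion to reach. The Jacquet--Langlands route you sketch is indeed the standard one: attach the compatible system $\{\rho_{\mathfrak{f},\ell}\}$ (Carayol, Taylor, Blasius--Rogawski), transfer to a Shimura curve $X_D$ over $K$ when a suitable quaternion algebra exists, and cut the elliptic curve out of $\mathrm{Jac}(X_D)$ using the rational Hecke eigenvalues. And you have correctly located the obstruction: a quaternion algebra over $K$ split at exactly one archimedean place must ramify at $[K:\mathbb{Q}]-1$ infinite places, and since the total number of ramified places must be even, when $[K:\mathbb{Q}]$ is even one is forced to ramify at some finite place, which requires $\pi_{\mathfrak{f}}$ to be discrete series (special/supercuspidal) there. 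If $\pi_{\mathfrak{f}}$ is principal series at every finite place, no Shimura curve of the right shape exists, and the geometric construction is unavailable --- this is precisely the open case.

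What the paper actually does is side-step the conjecture: it invokes Theorem~\ref{inertia} (Freitas--Siksek, built on Blasius, Darmon, and Zhang), which is a \emph{partial} version of Eichler--Shimura with additional local hypotheses --- the existence of a prime $\mathfrak{q}\nmid p$ at which the Frey curve has potentially multiplicative reduction with $p \mid \#\overline{\rho}_{E,p}(I_{\mathfrak{q}})$ and $p\nmid(\mathrm{Norm}(\mathfrak{q})\pm 1)$. These hypotheses force the newform obtained by level lowering to be special at $\mathfrak{q}$, which re-enables the Jacquet--Langlands transfer in exactly the problematic even-degree case. That is why the image-of-inertia computations (Lemmas~\ref{lemma} and~\ref{imgi}) are a load-bearing part of the argument. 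Your proposal is a fair description of the mathematics surrounding the conjecture, but for purposes of this paper the relevant thing to understand is that it is never proved: only the conditional Theorem~\ref{inertia} is used, and the local inertia conditions are arranged precisely so that its hypotheses are met.
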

Freitas and Siksek \cite{SN} obtained the following theorem  from works of Blasius \cite{DB}, Darmon \cite{D} and Zhang \cite{Z}.

\begin{theorem}\label{inertia}
 Let $E$ be an elliptic curve over a totally real field $K$ , and $p$ be an odd prime. Suppose that $\overline{\rho}_{E,p}$
 is irreducible, and $\overline{\rho}_{E,p} \sim \overline{\rho}_{\mathfrak{f},\varpi}$ for some Hilbert newform $\mathfrak{f}$ over $K$ of level $\mathcal{N}$ and parallel weight $2$ which
satisfies $\mathbb{Q}_{\mathfrak{f}} = \mathbb{Q}$. Let $\mathfrak{q}\nmid p$ be a prime ideal of $\mathcal{O}_K$ such that:
\begin{enumerate}
    \item $E$ has potentially multiplicative reduction at $\mathfrak{q}$,
    \item $p| \# \rep_{E,p}(I_{\mathfrak{q}})$,
    \item $p \nmid (\text{Norm}_{K/\mathbb{Q}}(\mathfrak{q})\pm 1) $.
\end{enumerate}
Then there is an elliptic curve $E_{\mathfrak{f}}/K$ of conductor $\mathcal{N}$ with the same L-function as $\mathfrak{f}$.
\end{theorem}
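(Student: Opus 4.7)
The plan is to produce the elliptic curve $E_\ff/K$ via the modularity-type constructions of Blasius and Zhang, and then to exclude the alternative \emph{fake elliptic curve} possibility by exploiting the inertia hypotheses at $\fq$.

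First I would invoke the results of Blasius and Zhang, which attach to any Hilbert newform $\ff$ of parallel weight $2$ with rational Hecke eigenvalues an abelian variety $A_\ff/K$ of $\mathrm{GL}_2$-type carrying a two-dimensional $\mathbb{Q}_p$-summand of its Tate module on which Galois acts via $\rho_{\ff,\varpi}$. By the work of Darmon, $A_\ff$ is either (a) an elliptic curve $E_\ff/K$ of conductor $\cN$ with $L(E_\ff,s)=L(\ff,s)$, which is exactly what we want, or (b) a \emph{fake elliptic curve}: an abelian surface whose endomorphism algebra is an indefinite non-split quaternion algebra $B/\mathbb{Q}$, and whose conductor as an abelian variety is $\cN^2$. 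The strategy is to assume (b) and force a contradiction using the isomorphism $\overline{\rho}_{E,p}\sim\overline{\rho}_{\ff,\varpi}$.

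The contradiction comes from comparing the two sides after restriction to the inertia subgroup $I_\fq$. Hypotheses (i) and (ii) together imply that, after a quadratic twist unramified at $\fq$, the curve $E$ has multiplicative reduction at $\fq$ with $p\mid v_\fq(\Delta_\fq)$; the Tate uniformization then gives that $\overline{\rho}_{E,p}(I_\fq)$ is cyclic of order exactly $p$, generated by a unipotent element. On the other hand, because the endomorphism algebra of a fake elliptic curve is a non-split quaternion algebra, $A_\ff$ admits no multiplicative reduction over any extension, and so it has potentially good reduction at $\fq$. Grothendieck's monodromy theorem together with the requirement that the inertia action commute with the quaternionic multiplication then forces $\overline{\rho}_{\ff,\varpi}(I_\fq)$ to be a cyclic subgroup whose order divides $\N_{K/\mathbb{Q}}(\fq)^2-1$, arising from automorphisms of the reduction compatible with the $B$-action.

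Equating the two inertia images yields $p\mid \N_{K/\mathbb{Q}}(\fq)^2-1 = (\N_{K/\mathbb{Q}}(\fq)-1)(\N_{K/\mathbb{Q}}(\fq)+1)$, contradicting hypothesis (iii); therefore case (a) must hold and we obtain the desired $E_\ff/K$. The main obstacle I anticipate is precisely the inertia bound in the fake elliptic curve case: carefully justifying that the mod $p$ inertia image has order dividing $\N_{K/\mathbb{Q}}(\fq)^2-1$ requires a detailed analysis of how the quaternionic endomorphism action restricts the automorphism group of the special fibre of the Néron model, together with a Brauer--Nesbitt argument to descend from the $p$-adic representation to its mod $\varpi$ reduction.
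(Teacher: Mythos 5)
Your overall framework---attach an abelian variety to $\ff$ via the Eichler--Shimura-type constructions of Blasius, Darmon and Zhang, then use the inertia data at $\fq$ to eliminate the fake elliptic curve alternative---is close in spirit to the proof in Freitas--Siksek (whose Corollary 2.2 is exactly this statement). However, the very first step contains a serious gap that undermines the rest of the argument.

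You assert that the results of Blasius and Zhang attach an abelian variety $A_\ff/K$ to \emph{any} parallel weight $2$ Hilbert newform with rational Hecke eigenvalues. This is false: that assertion is precisely the Eichler--Shimura conjecture of Section~\ref{ESconj}, which is open in general. The known constructions proceed via Shimura curves attached to quaternion algebras over $K$ that are split at exactly one archimedean place; the Jacquet--Langlands correspondence then requires the quaternion algebra to be ramified at an even total number of places, and hence, when $[K:\mathbb{Q}]$ is even, at some finite place $v$ at which $\ff_v$ must be discrete series (special or supercuspidal). Thus the abelian variety $A_\ff$ is only known to exist when $[K:\mathbb{Q}]$ is odd or $\ff$ is discrete series at some finite place, and establishing the latter is the real content of the theorem. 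Your proof never addresses this.

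Relatedly, you misidentify where hypothesis (iii) does its work. In the correct argument (iii) is used, together with the isomorphism $\overline{\rho}_{E,p}\sim\overline{\rho}_{\ff,\varpi}$ and hypotheses (i)--(ii), to show that $\ff$ is \emph{Steinberg} at $\fq$: if $\ff_\fq$ were ramified principal series, then $\rho_{\ff,\varpi}|_{I_\fq}$ would be a sum of two finite-order characters, each factoring through $\cO_\fq^\times$ and hence of order dividing $(\N_{K/\mathbb{Q}}(\fq)-1)\cdot\ell^{m}$ where $\ell\neq p$ is the residue characteristic; if $\ff_\fq$ were supercuspidal, the order of the inertia image would divide a quantity of the form $(\N_{K/\mathbb{Q}}(\fq)^2-1)\cdot\ell^{m}$. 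In both cases $p\nmid\#\overline{\rho}_{\ff,\varpi}(I_\fq)$ once $p\nmid\N_{K/\mathbb{Q}}(\fq)\pm 1$, contradicting (ii). So $\ff$ must be Steinberg at $\fq$, which simultaneously (a) supplies the discrete series place needed for the existence of $A_\ff$, and (b) rules out the fake elliptic curve case, because Steinberg at $\fq$ forces multiplicative reduction of $A_\ff$ at $\fq$, whereas a QM abelian surface has potentially good reduction everywhere. By contrast, if you had already known that $\ff$ corresponded to a fake elliptic curve, hypothesis (iii) would be entirely unnecessary: for $p\geq 5$, a finite subgroup of $\mathrm{GL}_2(\mathbb{Z}_p)$ automatically has order prime to $p$ (any order-$p$ element would need eigenvalues that are primitive $p$-th roots of unity, generating a degree $p-1>2$ extension of $\mathbb{Q}_p$), so $p\nmid\#\overline{\rho}_{\ff,\varpi}(I_\fq)$ follows from potentially good reduction alone. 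This is further evidence that your proposed mechanism is not the intended one. Finally, a small slip: hypotheses (i)--(ii) together with Lemma~\ref{lemma} give $p\nmid v_\fq(\Delta_\fq)$, not $p\mid v_\fq(\Delta_\fq)$; it is precisely this non-divisibility that makes the Tate-curve inertia image nontrivial of order $p$.
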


\section{Signature  \texorpdfstring{$(p,p,2)$}{TEXT}}\label{section3}
Let $K$ be a totally real field. Recall the set
$S_K=\{\mathfrak{P}\: : \mathfrak{P} \text{ is a prime of } K \text{ above } 2\}.$ Throughout this section we denote by $(a,b,c)\in \mathcal{O}_K^3$ a non-trivial, primitive solution of $a^p+b^p=c^2$.
\subsection{Frey Curve}
For $(a,b,c)\in \mathcal{O}_K^3$ as described above we associate the following Frey elliptic curve defined over $K$:
\begin{equation}\label{Frey2}
    E: Y^2=X^3+4cX^2+4a^pX.
\end{equation} 
 
We compute the arithmetic invariants: $$\Delta_E=2^{12}(a^2b)^p, c_4=2^6(4b^p+a^p)  \text{ and } j_E=2^6 \frac{(4b^p+a^p)
^3}{(a^2b)^p}.$$
\begin{lemma} \label{lemmane}
Let $(a, b, c)$ be the non-trivial, primitive solution to the equation $a^p+b^p=c^2$. Let $E$ be the associated Frey curve (\ref{Frey2}) with conductor $\mathcal{N}_E$. Then, for all primes $\fq\notin S_K$, the model $E$ is minimal, semistable and satisfies $p|v_{\mathfrak{q}}(\Delta_E)$. Moreover
\begin{equation}\label{ne}
\mathcal{N}_E=\displaystyle\prod_{\mathfrak{P}\in S_K}\mathfrak{P}^{r_{\mathfrak{P}}} \prod_{\substack{\mathfrak{q}|ab\\ \mathfrak{q}\notin S_K}}\mathfrak{q}, \qquad
\mathcal{N}_p=\displaystyle\prod_{\mathfrak{P}\in S_K}\mathfrak{P}^{r'_{\mathfrak{P}}}
\end{equation}
where $0\leq r'_{\mathfrak{P}} \leq r_{\mathfrak{P}}\leq 2+ 6v_{\mathfrak{P}}(2)$.
\end{lemma}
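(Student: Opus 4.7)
The plan is to analyze $E$ locally at each prime $\mathfrak{q}$ of $\mathcal{O}_K$, separating $\mathfrak{q}\in S_K$ from $\mathfrak{q}\notin S_K$, and then to assemble the conductor from the local data. I would rely throughout on the formulas $\Delta_E=2^{12}(a^2b)^p$ and $c_4=2^6(4b^p+a^p)$ recorded just above, together with the primitivity hypothesis on $(a,b,c)$.

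First I fix $\mathfrak{q}\notin S_K$, so $\mathfrak{q}\nmid 2$. Primitivity prevents $\mathfrak{q}$ from dividing both $a$ and $b$, so if $\mathfrak{q}\mid ab$ then $\mathfrak{q}$ divides exactly one of them, which gives $v_{\mathfrak{q}}(4b^p+a^p)=0$ and hence $v_{\mathfrak{q}}(c_4)=0$. The standard $c_4$-criterion for minimality then forces the given model to be minimal at $\mathfrak{q}$, and $v_{\mathfrak{q}}(c_4)=0$ together with $v_{\mathfrak{q}}(\Delta_E)>0$ forces multiplicative reduction, hence semistability and conductor exponent $1$ at $\mathfrak{q}$. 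If instead $\mathfrak{q}\nmid ab$, then $v_{\mathfrak{q}}(\Delta_E)=0$, so $E$ has good reduction and all of the claims are immediate. In both sub-cases
\[
v_{\mathfrak{q}}(\Delta_E)=p\bigl(2v_{\mathfrak{q}}(a)+v_{\mathfrak{q}}(b)\bigr),
\]
which is a multiple of $p$; since the model is minimal at $\mathfrak{q}$ this also equals $v_{\mathfrak{q}}(\Delta_{\mathfrak{q}})$.

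For $\mathfrak{P}\in S_K$ the Frey model may be far from minimal and the reduction type may be essentially anything, but the conductor exponent of an elliptic curve at a prime of residue characteristic $2$ is bounded above by $2+6v_{\mathfrak{P}}(2)$ by the classical bounds of Brumer--Kramer (see also Papadopoulos). This yields $r_{\mathfrak{P}}\le 2+6v_{\mathfrak{P}}(2)$ and, combined with the previous paragraph, gives the stated factorisation of $\mathcal{N}_E$.

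To finish I apply the definition of $\mathcal{M}_p$ and $\mathcal{N}_p$ in (\ref{condNM}): every prime $\mathfrak{q}\mid ab$ with $\mathfrak{q}\notin S_K$ satisfies $\mathfrak{q}\,||\,\mathcal{N}_E$ and $p\mid v_{\mathfrak{q}}(\Delta_{\mathfrak{q}})$ by the previous paragraphs, so each such $\mathfrak{q}$ divides $\mathcal{M}_p$ and is removed upon passing to $\mathcal{N}_p=\mathcal{N}_E/\mathcal{M}_p$. Hence $\mathcal{N}_p$ is supported on $S_K$ with exponents $0\le r'_{\mathfrak{P}}\le r_{\mathfrak{P}}$. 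The only non-elementary input is the conductor-exponent bound at primes above $2$, which I would quote from the literature rather than re-derive; everything else reduces to the primitivity of $(a,b,c)$ plus one application each of the minimality criterion and of Tate's algorithm, so this ingredient is the main obstacle to a fully self-contained proof.
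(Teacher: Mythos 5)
Your argument is essentially identical to the paper's: for odd primes $\mathfrak{q}$ you use primitivity to force $v_{\mathfrak{q}}(c_4)=0$ whenever $\mathfrak{q}\mid ab$, conclude the model is minimal with multiplicative reduction, read off $p\mid v_{\mathfrak{q}}(\Delta_{\mathfrak{q}})$, and then bound the conductor exponent at primes above $2$ by the classical $2+6v_{\mathfrak{P}}(2)$ estimate (the paper cites Silverman, \emph{Advanced Topics} IV.10.4, where you cite Brumer--Kramer/Papadopoulos; these give the same bound). Your explicit handling of the good-reduction subcase $\mathfrak{q}\nmid ab$ and the formula $v_{\mathfrak{q}}(\Delta_E)=p(2v_{\mathfrak{q}}(a)+v_{\mathfrak{q}}(b))$ are just slightly more spelled out than the paper's version; the logic and inputs are the same.
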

\begin{proof}
Let $\mathfrak{q}$ be an odd prime of $K$. The invariants of the model $E$ are $\Delta_E=2^{12}(a^2b)^p \text{ and } c_4=2^6(4b^p+a^p).$ Suppose that $\mathfrak{q}$ divides $\Delta_E$, so $\mathfrak{q}|ab$. Since $a$ and $b$ are relatively prime, $\mathfrak{q}$ divides exactly one of $a$ and $b$.
Therefore, $\mathfrak{q}$ does not divide $c_4$. In particular, the model is minimal at $\mathfrak{q}$ and has multiplicative reduction. Hence $p|v_{\mathfrak{q}}(\Delta_E)=v_{\mathfrak{q}}(\Delta_{\mathfrak{q}})$. On the other hand $\mathfrak{P} \in S_K$ is an even prime, so we have $r_{\mathfrak{P}}=v_{\mathfrak{P}}(\mathcal{N}_E)\leq 2 + 6v_{\mathfrak{P}}(2)$ by \cite[Theorem IV.10.4]{ASilv}.
The definition of $\mathcal{N}_E$ gives the desired form in (\ref{ne}). Then, use (\ref{condNM}) to get $\mathcal{N}_p$ and observe that $r'_{\mathfrak{P}}=r_{\mathfrak{P}}$ unless $E$ has multiplicative reduction at $\mathfrak{P}$ and $p|v_{\mathfrak{P}}(\Delta_{\mathfrak{P}})$ in which case $r_{\mathfrak{P}}=1$ and $r'_{\mathfrak{P}}=0$.
\end{proof}
\begin{lemma} \label{modularity3}
Let $K$ be a totally real field. There is some constant $A_K$ depending only on $K$, such that for any non-trivial, primitive solution $(a,b,c)$ of $a^p+b^p=c^2$ and $p>A_K$, the Frey curve given by (\ref{Frey2}) is modular.
\end{lemma}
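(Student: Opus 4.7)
The plan is to argue by contradiction: assume no such $A_K$ exists, so that for arbitrarily large primes $p$ there is a non-trivial primitive solution $(a,b,c)$ whose Frey curve $E$ from \eqref{Frey2} is non-modular. By Theorem \ref{modularity}, the set $J \subset K$ of $j$-invariants of non-modular elliptic curves over $K$ is finite, so $j_E \in J$ for all such counter-examples.

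Next I would introduce the single parameter $t := (b/a)^p \in K^{*}$. Factoring $a^{3p}$ out of the numerator and denominator in $j_E = 2^6(4b^p+a^p)^3/(a^2b)^p$ yields the clean identity
\[
j_E \;=\; \frac{2^6(1+4t)^3}{t}.
\]
For each $j_0 \in J$ the equation $2^6(1+4t)^3 = j_0 \, t$ is a cubic in $t$, so as the solution varies, $t$ is confined to a finite subset $T \subset \bar{K}^{*}$. Set $H := \max_{t \in T} h(t)$ where $h$ is the absolute logarithmic Weil height. From $t=(b/a)^p$ and homogeneity of the height we get $p\cdot h(b/a) = h(t) \leq H$, i.e.\ $h(b/a) \leq H/p$. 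Since $K$ is fixed, Northcott's theorem tells us $\{\alpha \in K^{*} : h(\alpha) \leq 1\}$ is finite, so there exists $\delta_K > 0$ with the property that every $\alpha \in K^{*}$ satisfies either $h(\alpha)=0$ or $h(\alpha) \geq \delta_K$. Taking $A_K > H/\delta_K$, for $p > A_K$ we force $h(b/a) = 0$, whence $b/a$ is a root of unity in the totally real field $K$, that is $b/a \in \{\pm 1\}$.

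I then eliminate the two remaining possibilities. If $b=-a$ then $c^2 = a^p+b^p = 0$, contradicting $c \neq 0$. If $b=a$, primitivity forces $a=b$ to be a unit of $\cO_K$, and substituting into the $j$-formula gives $j_E = 2^6 \cdot 5^3 = 8000$. Now $8000 \in \mathbb{Q}$ is the classical CM $j$-invariant attached to the order $\mathbb{Z}[\sqrt{-2}]$, so any elliptic curve over $K$ with this $j$-invariant is a quadratic twist (obtained via base change) of a $\mathbb{Q}$-curve with CM; such curves are modular by classical work on CM, and both base change to $K$ and quadratic twisting preserve modularity, so $E$ would have to be modular, the desired contradiction.

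The main obstacle I anticipate is the $b=a$ case: the finiteness argument via Theorem \ref{modularity} and heights only pins $b/a$ down to $\pm 1$, and one must then explicitly rule out $b=a$ by invoking modularity of the exceptional $j=8000$ curve. This requires classical modularity of CM elliptic curves, together with stability of modularity under base change and quadratic twist; the rest of the argument is just Northcott, homogeneity of the height, and the cubic constraint coming from the $j$-formula.
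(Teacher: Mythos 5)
Your proposal is correct and follows essentially the same route as the paper: both invoke Theorem \ref{modularity} to confine $j_E=2^6(1+4t)^3/t$, $t=(b/a)^p$, to finitely many exceptional values via the resulting cubic, and then bound $p$, your Northcott--Kronecker height argument being an explicit version of the paper's ``lower bound on $p$ for each $\lambda_k$''. The only (minor) divergence is in disposing of $t=\pm1$: the paper appeals to modularity of curves with rational $j$-invariant, while you use non-triviality for $t=-1$ and modularity of the CM curve with $j=8000$ for $t=1$ --- the latter is best justified directly by modularity of CM elliptic curves over totally real fields (automorphic induction from the imaginary quadratic CM extension of $K$) rather than by base change from $\mathbb{Q}$, which is not available for arbitrary degree $[K:\mathbb{Q}]$ outside the CM setting.
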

\begin{proof}
By Theorem \ref{modularity}, there are at most finitely many possible $\bar{K}$-isomorphism classes of elliptic curves over $E$ which are not modular. Let $j_1, j_2, \dots, j_n \in K$ be the $j$-invariants of these classes. Define $\lambda := b^p/a^p$. The $j$-invariant of $E$ is
$$j(\lambda)=2^6(4\lambda +1)^3\lambda^{-1}
.$$
We can assume $\lambda \notin \{0, \pm 1 \}$ as these $\lambda$ lead to $j(\lambda) \in \mathbb{Q}$ and we know that all rational elliptic curves are modular.
Each equation $j(\lambda)=j_i$ has at most three solutions $\lambda \in K$. Thus there are values $\lambda_1,\dots,\lambda_m \in K\: (\text{where }m\leq 3n)$ such that if $\lambda \neq \lambda_k$ for all $k$, then the elliptic curve $E$ with $j$-invariant $j(\lambda)$ is modular.

If $\lambda = \lambda_k$ then $(b/a)^p = \lambda_k$, but the polynomial $x^p+\lambda_k$ has a root in $K$ if and only if $\lambda_k \in (K^*)^p$ because $K$ is totally real and $\lambda_k \notin \{ 0, \pm1 \}.$
Hence we get a lower bound on $p$ for each $k$, and by taking the maximum of these bounds we get $A_K$.

\begin{remark}
The constant $A_K$ is ineffective as the finiteness of Theorem \ref{modularity} relies on Falting's Theorem (which is ineffective). See \cite{SHN} for more details.
Note that if $K$ is quadratic or cubic we get $A_K=0$ (by the last part of Theorem \ref{modularity} and Theorem \ref{modularity2}).
\end{remark}

\end{proof}
\subsection{Images of Inertia}
We gather information about the images of inertia $\rep_{E,p}(I_{\mathfrak{q}})$. This is a crucial step in applying Theorem \ref{inertia} and for controlling the behaviour at the primes in $S_K$ of the newform obtained by level lowering.
\begin{lemma} \label{lemma}
Let $E$ be an elliptic curve over $K$ with $j$-invariant $j_E$. Let $p\geq 5$ and
let $\mathfrak{q} \nmid p$ be a prime of $K$. Then $p | \# \rep_{E,p}(I_{\mathfrak{q}})$ if and only if $E$ has potentially
multiplicative reduction at $\mathfrak{q}$ (i.e. $v_{\mathfrak{q}}(j_E)<0$) and $p \nmid v_{\mathfrak{q}}(j_E)$.
\end{lemma}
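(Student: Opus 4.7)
The plan is to split the argument by the reduction type of $E$ at $\mathfrak{q}$, which is detected by the sign of $v_{\fq}(j_E)$: potentially good reduction corresponds to $v_{\fq}(j_E) \geq 0$, and potentially multiplicative reduction to $v_{\fq}(j_E) < 0$. In the potentially good case I would show that $\#\rep_{E,p}(I_{\fq})$ is always coprime to $p$, so both sides of the claimed equivalence are simultaneously false. In the potentially multiplicative case the heart of the argument is Tate's uniformization, from which the divisibility by $p$ can be read off directly.

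First suppose $v_{\fq}(j_E) \geq 0$. Then $E$ acquires good reduction over some finite extension $L/K_{\fq}$, and N\'eron--Ogg--Shafarevich (using $\fq \nmid p$) tells us that the inertia subgroup $I_L$ acts trivially on $E[p]$. Thus $\rep_{E,p}(I_{\fq})$ factors through the finite group $I_{\fq}/I_L$. A standard result of Serre--Tate shows that for $p \geq 5$ the order of this quotient divides $24$, and hence is coprime to $p$. Consequently $p \nmid \#\rep_{E,p}(I_{\fq})$, which matches the failure of the right-hand side since $v_{\fq}(j_E) \geq 0$.

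Next suppose $v_{\fq}(j_E) < 0$, and first assume $E$ has split multiplicative reduction at $\fq$. By Tate's theorem (see \cite[Ch.~V]{ASilv}) there is a Tate parameter $q_E \in K_{\fq}^*$ with $v_{\fq}(q_E) = -v_{\fq}(j_E) > 0$, and a $G_{K_{\fq}}$-equivariant isomorphism $E(\bar K_{\fq}) \cong \bar K_{\fq}^*/q_E^{\mathbb{Z}}$. Under this identification, $E[p]$ is generated by $\zeta_p$ and any chosen $p$-th root $q_E^{1/p}$. Because $\fq \nmid p$, inertia acts trivially on $\zeta_p$, while its action on $q_E^{1/p}$ is through a character $\chi : I_{\fq} \to \mu_p$. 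Since $K_{\fq}^{\mathrm{un}}$ contains all $p$-th roots of units at $\fq$, the character $\chi$ is nontrivial if and only if $q_E \notin (K_{\fq}^{\mathrm{un},*})^p$, equivalently $p \nmid v_{\fq}(q_E) = -v_{\fq}(j_E)$. Hence $p \mid \#\rep_{E,p}(I_{\fq})$ iff $p \nmid v_{\fq}(j_E)$, as claimed. For the potentially multiplicative but non-split subcase, $E$ becomes split multiplicative over an at-most quadratic extension, and quadratic twisting only alters $\rep_{E,p}$ by a character of order dividing $2$. Since $p \geq 5$ is odd, this twist changes neither whether $p$ divides $\#\rep_{E,p}(I_{\fq})$ nor the value of $j_E$, so the conclusion transfers.

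The main obstacle I expect is the careful tracking of the Galois action on $q_E^{1/p}$ under Tate's uniformization to extract the exact divisibility criterion on $v_{\fq}(j_E)$; the potentially good case is routine once the bound on the tame inertia quotient is taken for granted, and the reduction of the non-split case to the split case is cheap for odd $p$.
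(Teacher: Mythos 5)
Your proposal is correct, and it in fact supplies a genuine proof where the paper gives none: the paper's ``proof'' of this lemma consists of a single citation to \cite[Lemma~3.4]{SN}, and your argument reproduces the standard approach used there. The two cases are handled exactly as one expects: in the potentially good case, Néron--Ogg--Shafarevich (valid since $\mathfrak{q}\nmid p$) plus the Serre--Tate bound that $\#\rep_{E,p}(I_{\mathfrak{q}})$ divides $24$ shows $p\nmid\#\rep_{E,p}(I_{\mathfrak{q}})$ for $p\geq 5$; in the potentially multiplicative case, Tate uniformization gives $\rep_{E,p}|_{I_{\mathfrak{q}}}$ unipotent with the off-diagonal character cutting out $q_E^{1/p}$, nontrivial precisely when $p\nmid v_{\mathfrak{q}}(q_E)=-v_{\mathfrak{q}}(j_E)$, and the reduction of the non-split and additive-but-potentially-multiplicative cases to the split case via a quadratic twist is harmless for odd $p$ and leaves $j_E$ unchanged. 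No gaps; the only small thing worth spelling out in a polished write-up is that in the split multiplicative case the image of inertia is contained in the unipotent upper-triangular subgroup of $\mathrm{GL}_2(\mathbb{F}_p)$, hence has order $1$ or $p$, so the dichotomy ``$\chi$ trivial or not'' exactly governs $p\mid\#\rep_{E,p}(I_{\mathfrak{q}})$.
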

\begin{proof}
See \cite[Lemma 3.4]{SN}.
\end{proof}
\begin{lemma} \label{imgi}
Let ${\mathfrak{P}} \in S_K$ and $(a,b,c)$ a non-trivial, primitive solution to $a^p+b^p=c^2$ with $\mathfrak{P} | b$ and prime exponent $p>6 v_{\mathfrak{P}}(2)$. Let $E$ be the Frey curve in $(\ref{Frey2})$ with $j$-invariant $j_E$. Then $E$ has potentially multiplicative reduction at ${\mathfrak{P}}$ and $p | \# \rep_{E,p}(I_{\mathfrak{P}})$.
\end{lemma}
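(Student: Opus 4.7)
The plan is to apply Lemma \ref{lemma}, which reduces the task to showing two things: that $v_{\mathfrak{P}}(j_E)<0$, and that $p\nmid v_{\mathfrak{P}}(j_E)$. First I would need to note the hypothesis $\mathfrak{P}\nmid p$, which is automatic since $\mathfrak{P}\mid 2$ and $p>6v_{\mathfrak{P}}(2)\geq 6$ forces $p$ to be odd (in fact $p\geq 7$), so $\mathfrak{P}\nmid p$ and Lemma \ref{lemma} is applicable.

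The key step is a direct computation of $v_{\mathfrak{P}}(j_E)$ from the formula
\[
j_E=2^6\,\frac{(4b^p+a^p)^3}{(a^2b)^p}.
\]
Primitivity of $(a,b,c)$ together with $\mathfrak{P}\mid b$ forces $v_{\mathfrak{P}}(a)=0$. Then $v_{\mathfrak{P}}(a^p)=0$, while
\[
v_{\mathfrak{P}}(4b^p)=2v_{\mathfrak{P}}(2)+p\,v_{\mathfrak{P}}(b)\geq 2v_{\mathfrak{P}}(2)+p>0.
\]
Since the two valuations are distinct, the strict non-Archimedean triangle inequality yields $v_{\mathfrak{P}}(4b^p+a^p)=0$. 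Combining these gives
\[
v_{\mathfrak{P}}(j_E)=6v_{\mathfrak{P}}(2)-p\,v_{\mathfrak{P}}(b).
\]

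From here the two conditions follow quickly. Since $v_{\mathfrak{P}}(b)\geq 1$ and $p>6v_{\mathfrak{P}}(2)$, we have $p\cdot v_{\mathfrak{P}}(b)\geq p>6v_{\mathfrak{P}}(2)$, so $v_{\mathfrak{P}}(j_E)<0$ and $E$ has potentially multiplicative reduction at $\mathfrak{P}$. For the second condition, if $p\mid v_{\mathfrak{P}}(j_E)$ then $p\mid 6v_{\mathfrak{P}}(2)$ (since $p\mid p\,v_{\mathfrak{P}}(b)$); but $0<6v_{\mathfrak{P}}(2)<p$ by assumption, which is impossible. Hence $p\nmid v_{\mathfrak{P}}(j_E)$, and Lemma \ref{lemma} delivers the conclusion $p\mid \#\overline{\rho}_{E,p}(I_{\mathfrak{P}})$.

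There is no genuine obstacle in this lemma beyond careful bookkeeping with $\mathfrak{P}$-adic valuations; the numerical hypothesis $p>6v_{\mathfrak{P}}(2)$ is calibrated precisely so that both the potential multiplicativity at $\mathfrak{P}$ and the non-divisibility by $p$ of $v_{\mathfrak{P}}(j_E)$ can be read off the same expression $6v_{\mathfrak{P}}(2)-p\,v_{\mathfrak{P}}(b)$. The only point to watch is the use of strict inequality in the ultrametric law when comparing $v_{\mathfrak{P}}(4b^p)$ with $v_{\mathfrak{P}}(a^p)$, which is what turns $v_{\mathfrak{P}}(4b^p+a^p)$ into $0$ rather than an upper bound.
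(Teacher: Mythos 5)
Your proof is correct and takes essentially the same approach as the paper: apply Lemma \ref{lemma} after computing $v_{\mathfrak{P}}(j_E)=6v_{\mathfrak{P}}(2)-p\,v_{\mathfrak{P}}(b)$ and reading off both $v_{\mathfrak{P}}(j_E)<0$ and $p\nmid v_{\mathfrak{P}}(j_E)$ from the hypothesis $p>6v_{\mathfrak{P}}(2)$. You simply spell out the intermediate ultrametric step $v_{\mathfrak{P}}(4b^p+a^p)=0$ (via primitivity giving $v_{\mathfrak{P}}(a)=0$), which the paper states implicitly.
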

\begin{proof}
Assume that $\mathfrak{P} \in S_K$ with $v_{\mathfrak{P}}(b)=k$. Then $v_{\mathfrak{P}}(j_E)= 6v_{\mathfrak{P}}(2) - pk$. Since $p>6 v_{\mathfrak{P}}(2)$, it follows that $v_{\mathfrak{P}}(j_E)<0$ and clearly $p\nmid v_{\mathfrak{P}}(j_E)$.
This implies that $E$ has potentially multiplicative reduction at $\mathfrak{P}$ and by Lemma \ref{lemma} we get $p | \# \rep_{E,p}(I_{\mathfrak{P}})$.
\end{proof}
\subsection{Level Lowering and Eichler Shimura}
This is a key result in the proof of Theorem \ref{main1}, for which we have prepared the ingredients in the previous sections. We will follow the corresponding proofs in \cite{SN} and \cite{T}.
\begin{theorem}\label{LL}
Let $K$ be a totally real number field and assume it has a distinguished prime $\tilde{\mathfrak{P}} \in S_K$. Then there is a constant $B_K$
depending only on $K$ such that the following hold. Suppose $(a,b,c) \in \cO_K^3$ is a non-trivial, primitive solution to $a^p+b^p=c^2$ with prime exponent $p>B_K$ such that $\tilde{\fP}|b$. Write $E$
for the Frey curve (\ref{Frey2}). Then, there is an elliptic curve $E'$ over $K$ such that:
\begin{enumerate}
    \item the elliptic curve $E'$ has good reduction outside $S_K$; 
   \item $\overline{\rho}_{E,p}\sim\overline{\rho}_{E',p}$;
     \item $E'$ has a $K$-rational point of order $2$;
    \item $E'$ has potentially multiplicative reduction at $\tilde{\mathfrak{P}}$ $(v_{\tilde{\mathfrak{P}}}(j_{E'})<0$ where $j_{E'}$ is the $j$-invariant of $E')$.
\end{enumerate}
\end{theorem}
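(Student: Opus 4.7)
The plan is to chain together the modularity, irreducibility, level--lowering and Eichler--Shimura machinery set up in Sections~\ref{modsection}--\ref{ESconj}, using the Frey curve $E$ of (\ref{Frey2}) as input. First I would invoke Lemma~\ref{modularity3} to secure modularity of $E$ once $p > A_K$. Lemma~\ref{lemmane} shows that $E$ is semistable at every $\fq\notin S_K$ and, in particular, at every $\fq\mid p$ (since $p$ is odd forces $\fq\notin S_K$), and also gives $p\mid v_{\fq}(\Delta_E)$ at all such primes. Irreducibility of $\rep_{E,p}$ for $p$ beyond a constant depending only on $K$ then follows from Theorem~\ref{irred} applied to the Galois closure of $K$. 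The hypotheses of Theorem~\ref{ll} are now all in place (its ramification conditions becoming vacuous for $p$ sufficiently large), and level lowering produces a Hilbert eigenform $\ff$ of parallel weight $2$, new at level $\cN_p=\prod_{\fP\in S_K}\fP^{r'_{\fP}}$ with $r'_{\fP}\leq 2+6v_{\fP}(2)$, together with a prime $\varpi\mid p$ of $\mathbb{Q}_{\ff}$ satisfying $\rep_{E,p}\sim\rep_{\ff,\varpi}$.

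Because these level exponents are bounded uniformly in $(a,b,c)$ and $p$, the ideal $\cN_p$ ranges over a finite list of ideals depending only on $K$, so $\ff$ belongs to a finite pool of newforms. For each $\ff$ in that pool with $\mathbb{Q}_{\ff}\neq\mathbb{Q}$, a standard auxiliary-prime argument --- choose $\fq\nmid 2p\cN_p$ and compare $\mathrm{Tr}(\rep_{E,p}(\mathrm{Frob}_{\fq}))$ with the Hecke eigenvalue $a_{\fq}(\ff)$ --- yields a contradiction once $p$ passes an explicit bound. Enlarging $B_K$ to absorb these finitely many bounds lets me assume $\mathbb{Q}_{\ff}=\mathbb{Q}$. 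I then apply Theorem~\ref{inertia} at $\fq=\tilde{\fP}$: its substantive hypotheses, namely potentially multiplicative reduction of $E$ at $\tilde{\fP}$ and $p\mid\#\rep_{E,p}(I_{\tilde{\fP}})$, are supplied by Lemma~\ref{imgi}, while the numerical condition $p\nmid\mathrm{Norm}(\tilde{\fP})\pm 1$ is again absorbed into $B_K$. The output is an elliptic curve $E_{\ff}/K$ of conductor $\cN_p$ with the same $L$-function as $\ff$.

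Three of the four claimed properties fall out immediately: (i) holds because $\cN_p$ is supported on $S_K$; (ii) is the composition $\rep_{E,p}\sim\rep_{\ff,\varpi}\sim\rep_{E_{\ff},p}$; and (iv) follows because the isomorphism of mod-$p$ representations transports $p\mid\#\rep_{E,p}(I_{\tilde{\fP}})$ to $p\mid\#\rep_{E_{\ff},p}(I_{\tilde{\fP}})$, whence Lemma~\ref{lemma} forces $E_{\ff}$ to have potentially multiplicative reduction at $\tilde{\fP}$.

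The delicate point is property (iii), the $K$-rational $2$-torsion. The Frey curve has the $K$-rational $2$-torsion point $(0,0)$ and hence a $K$-rational $2$-isogeny, but this is mod-$2$ information and is \emph{not} transported through the mod-$p$ isomorphism $\rep_{E,p}\sim\rep_{E_{\ff},p}$; indeed $E$ and $E_{\ff}$ are generally not isogenous because they have different reduction types. The plan is therefore to replace $E_{\ff}$ by a suitable curve $E'$ in its isogeny class: it suffices to produce a $G_K$-stable subgroup $P$ of order $2$ in $E_{\ff}[2]$, for then the kernel of the dual isogeny on the quotient $E':=E_{\ff}/P$ provides the required $K$-rational $2$-torsion point, while (i), (ii) and (iv) are preserved under $K$-rational isogeny. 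The hypothesis $Cl_{S_K}(K)[2]=\{1\}$ is expected to enter precisely here via Lemma~\ref{elliptic}(i) and Lemma~\ref{mainelliptic}(i), which parametrize elliptic curves of good reduction outside $S_K$ carrying a $K$-rational $2$-torsion by $\cO_{S_K}^{*}$; vanishing of $Cl_{S_K}(K)[2]$ removes the cohomological obstruction to descending a $\bar K$-rational $2$-isogeny on $E_{\ff}$ to a $K$-rational one. This descent is where I expect the main difficulty of the proof to lie.
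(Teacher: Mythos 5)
Your chain of modularity, irreducibility, level lowering, the auxiliary--prime reduction to $\mathbb{Q}_{\ff}=\mathbb{Q}$, and the Eichler--Shimura step via Lemma~\ref{imgi} and Theorem~\ref{inertia} all track the paper's proof faithfully, and your arguments for (i), (ii) and (iv) are correct. You are also right that (iii) is the delicate point.

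However, your proposed mechanism for (iii) is the wrong one, and the confusion is conceptual. Theorem~\ref{LL} does \emph{not} assume $Cl_{S_K}(K)[2]=\{1\}$; that hypothesis appears only in Theorem~\ref{main1}, and it is used there at a completely different stage --- to show that the ideal $(\lambda)\cO_K=I^2J$ can be rewritten with $I$ a principal ideal times an $S_K$-ideal, so that $\lambda=u\gamma^2$ with $u\in\cO_{S_K}^*$. It is not available in the proof of Theorem~\ref{LL} and plays no role there. In particular, Lemmas~\ref{elliptic}(i) and~\ref{mainelliptic}(i) are invoked only inside the proof of Theorem~\ref{main1}, after $E'$ has already been produced, and the claim that vanishing of $Cl_{S_K}(K)[2]$ ``removes the cohomological obstruction to descending a $\bar K$-rational $2$-isogeny to a $K$-rational one'' has no foundation: the $2$-torsion of $Cl_{S_K}(K)$ does not parametrize rational subgroups of a given elliptic curve, and no such descent statement is true or needed.

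The paper's actual argument for (iii) is the ``standard'' one (the citation is to Serre's Section IV-6; the same ideas appear in Bennett--Skinner and in Freitas--Siksek): because the Frey curve $E$ has a $K$-rational $2$-torsion point, the traces $a_{\fq}(E)$ at primes $\fq$ of good reduction satisfy a congruence mod $2$; the isomorphism $\rep_{E,p}\sim\rep_{E',p}$ together with the Hasse bound then forces, for $p$ beyond an effective bound depending only on $K$ and on the finite list of possible conductors $\cN_p$, the traces $a_{\fq}(E')$ to satisfy the same congruences, which constrains the semisimplification of $\rep_{E',2}$; one then replaces $E'$ by a curve in its (finite, Shafarevich) isogeny class to realize a $K$-rational $2$-torsion point. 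This works with no hypothesis on the class group whatsoever. So while your instinct that $E'$ must be replaced within its isogeny class is correct, the mechanism you sketched would not give a proof of (iii), and in any case would need hypotheses the theorem does not grant.
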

\begin{proof}
We first observe by Lemma \ref{lemmane} that $E$ has multiplicative reduction outside $S_K$. By taking $B_K$ sufficiently large, we see from Lemma \ref{modularity3} that $E$ is modular and by Theorem \ref{irred} that $\rep_{E,p}$ is irreducible. Applying Theorem \ref{ll} and Lemma \ref{lemmane} we see that $\rep_{E,p} \sim \rep_{\mathfrak{f},\varpi}$ for a Hilbert newform $\mathfrak{f}$ of level $\mathcal{N}_p$ and
some prime $\varpi|p$ of $\mathbb{Q}_{\mathfrak{f}}$. Here $\mathbb{Q}_{\mathfrak{f}}$ denotes the field generated by the Hecke eigenvalues $\mathfrak{f}$.
Next we reduce to the case when $\mathbb{Q}_{\mathfrak{f}}=\mathbb{Q}$, after possibly enlarging $B_K$. This step uses standard ideas originally due to Mazur that can be found in \cite[Section 4]{BS}, \cite[Proposition 15.4.2]{HC2}, and so we omit the details.
\par
Next we want to show that there is some elliptic curve $E'/K$ of conductor $\mathcal{N}_p$ having the same L-function as $\mathfrak{f}$.We apply
Lemma \ref{imgi} with $\mathfrak{P} = \tilde{\mathfrak{P}}$ and get that $E$ has potentially
multiplicative reduction at $\tilde{\mathfrak{P}}$ and $p | \# \rep_{E,p}(I_{\mathfrak{P}})$. The existence of $E'$ follows from Theorem \ref{inertia} after possibly enlarging $B_K$ to ensure that $p \nmid (\text{Norm}_{K/\mathbb{Q}}(\tilde{\mathfrak{\mathfrak{P}}})\pm 1)$.
By putting all the pieces together we can conclude that there is an elliptic curve $E'/K$ of conductor $\cN_p$ satisfying $\rep_{E,p} \sim \rep_{E',p}$. This proves (i) and (ii).

To prove (iii) we use that $\rep_{E,p} \sim \rep_{E',p}$ for some $E'/K$ with conductor  $\mathcal{N}_{p}$. After enlarging $B_K$ by an effective amount, and possibly replacing $E'$ by an isogenous curve, we may assume that $E'$ has a $K$-rational point of order $2$. This uses standard ideas which can be found, for example, in \cite[Section IV-6]{Serre}.

Now let $j_{E'}$ be the $j$-invariant of $E'$. As we have already seen, Lemma \ref{imgi} implies $p | \# \rep_{E,p}(I_{\tilde{\mathfrak{P}}})$, hence $p | \# \rep_{E',p}(I_{\tilde{\mathfrak{P}}})$, thus by Lemma \ref{lemma} we get that $E'$ has potentially multiplicative reduction at $\tilde{\mathfrak{P}}$ and so $v_{\tilde{\mathfrak{P}}}(j_{E'})<0$.
\end{proof}

\subsection{Proof of Theorem \ref{main1}}

\begin{proof}
Given a primitive, non-trivial solution $(a,b,c)$ such that $\tilde{\mathfrak{P}}|b$ with a prime exponent $p$ we associate the Frey elliptic curve in (\ref{Frey2}).
By Theorem \ref{LL} for there exists $B_K$ such that for all $p>B_K$ we can find an elliptic curve $E'$ which is related to $E$ by $\overline{\rho}_{E,p}\sim\overline{\rho}_{E',p}$ and has a $K$-rational point of order $2$. Hence by Lemma \ref{torsion} (i) we get a model
$$ E': Y^2=X^3+a'X^2+b'X
$$
with arithmetic invariants $\Delta_{E'}=2^{4}b'^2(a'^2-4b')$, $j_{E'}=2^8 \frac{(a'^2-3b')^3}{b'^2(a'^2-4b')}$.
Moreover, by Theorem \ref{LL} (i), we know that $E'$ has good reduction outside $S_K$ which implies that $v_{\mathfrak{q}}(j_{E'})\geq 0$ for $\mathfrak{q}\notin S_K$. Therefore, $j_{E'} \in  \mathcal{O}_{S_K}$.
Consider $\lambda:= \frac{a'^2}{b'}$ and $\mu:=\lambda-4=\frac{a'^2-4b'}{b'}$. 
Next, we need to show that $\lambda$ can be written as $\lambda = u\gamma^2$, where $u$ is an $S_K$-unit.
By Lemma \ref{mainelliptic} (i) applied to $E'$ we get that $$(\lambda) \cO_K = I^2J \text{ where } J \text{ is an } S_K \text{-ideal}. $$
Thus $[I]^2=[J]$ as elements of the class group Cl($K$) and $[J]\in \langle[{\mathfrak{P}}]\rangle_{\mathfrak{P}\in S_K}$. This implies that $[I] \in Cl_{S_K}(K)[2]$ and by our assumption on $K$ that $Cl_{S_K}(K)[2]$ is trivial, we get that $[I] \in \langle[{\mathfrak{P}}]\rangle_{\mathfrak{P}\in S_K}$, i.e. $I:=\gamma \Tilde{I}$, where $\Tilde{I}$ is an $S_K$-ideal and $\gamma \in \cO_K$.
Consequently, $$(\lambda)\mathcal{O}_K=(\gamma)^2 \Tilde{I}^2J \text{ where both }\Tilde{I} \text{ and } J \text{ are }S_K \text{-ideals.}$$
Finally, $(\frac{\lambda}{\gamma^2})\mathcal{O}_K$ is an $S_K$-ideal, which implies that $u:=\frac{\lambda}{\gamma^2}$ is an $S_K$-unit.
Now, by dividing $\mu + 4 = \lambda \text{ by }u$, we get
\begin{equation}\label{ab} 
    \alpha+\beta=\gamma^2, \qquad \alpha:=\frac{\mu}{u} \in \mathcal{O}_{S_K}^*, \qquad \beta:= \frac{4}{u} \in \mathcal{O}_{S_K}^*.
\end{equation}
Now, suppose that there is some $\tilde{\mathfrak{P}} \in S_K$ that satisfies $|v_{\tilde{\mathfrak{P}}}(\frac{\alpha}{\beta})|\leq 6v_{\tilde{\mathfrak{P}}}(2)$. We will show that $v_{\tilde{\mathfrak{P}}}(j_{E'}) \geq 0$, contradicting Theorem \ref{LL} (iv) and hence we can conclude the proof.
By using \eqref{ab} we can rewrite the assumption $|v_{\tilde{\mathfrak{P}}}(\frac{\alpha}{\beta})|\leq 6v_{\tilde{\mathfrak{P}}}(2)$ in terms of the valuation of $\mu$, using that $\frac{\alpha}{\beta}=\frac{\mu}{4}$: $$-4v_{\tilde{\mathfrak{P}}}(2)\leq v_{\tilde{\mathfrak{P}}}(\mu)\leq 8v_{\tilde{\mathfrak{P}}}(2).$$
Note that $j_{E'}=2^8(\mu+1)^3\mu^{-1}$, hence
 $$v_{\tilde{\mathfrak{P}}}(j_{E'})=8v_{\tilde{\mathfrak{P}}}(2)+3v_{\tilde{\mathfrak{P}}}(\mu+1)-v_{\tilde{\mathfrak{P}}}(\mu).$$ There are three cases according to the valuation of $\tilde{\mathfrak{P}}$ at $\mu$:
 \\
 \textbf{Case (1):} Suppose $v_{\tilde{\mathfrak{P}}}(\mu)=0$. This implies that $v_{\tilde{\mathfrak{P}}}(\mu+1)\geq 0$, thus $v_{\tilde{\mathfrak{P}}}(j_{E'})\geq0$, a contradiction.
\\
\textbf{Case (2):} Suppose $v_{\tilde{\mathfrak{P}}}(\mu)>0$. This implies $v_{\tilde{\mathfrak{P}}}(\mu +1)=0$, thus, by using $v_{\tilde{\mathfrak{P}}}(\mu)\leq 8v_{\tilde{\mathfrak{P}}}(2)$ we get again $v_{\tilde{\mathfrak{P}}}(j_{E'})\geq0$.
\\
\textbf{Case (3):} Finally, suppose $v_{\tilde{\mathfrak{P}}}(\mu)<0$. This implies $v_{\tilde{\mathfrak{P}}}(\mu +1)=v_{\tilde{\mathfrak{P}}}(\mu)$, thus, by using $-4v_{\tilde{\mathfrak{P}}}(2)\leq v_{\tilde{\mathfrak{P}}}(\mu)$, we get one last time $v_{\tilde{\mathfrak{P}}}(j_{E'})\geq0$.\\
All three cases lead to contradictions and hence we conclude the proof.
\end{proof}
\subsection{Proof of Theorem \ref{2inert}}
\begin{proof}
We want to apply Theorem \ref{main1} with $\tilde{\fP}=\fP$ and $S_K=\{\fP \}$. Note that $2 \nmid h_K^+$ implies that $Cl_{S_K}(K)[2]$ is trivial. As $2$ is inert, we get $v_{\fP}(2)=1$.

Now, let us consider the equation $\alpha + \beta = \gamma^2$, with $\alpha, \beta \in \mathcal{O}_{S_K}^*$. By scaling the equation by even powers of $2$ and swapping $\alpha$ and $\beta$ if necessary, we may assume $0 \leq v_{\fP}(\beta)\leq v_{\fP}(\alpha)$ with $v_{\fP}(\beta) \in \{0,1 \}$.
\\
\textbf{Case (1):} Suppose $v_{\fP}(\beta)=1$. If $v_{\fP}(\alpha)\geq 2$, then $v_{\fP}(\gamma^2)=v_{\fP}(\alpha + \beta)=1$, which leads to a contradiction as $v_{\fP}(\gamma^2)$ must be even. Thus, $v_{\fP}(\alpha)=v_{\fP}(\beta)=1$ and $v_{\fP}(\frac{\alpha}{\beta})=0\leq6$.
\\
\textbf{Case (2):} Suppose $v_{\fP}(\beta)=0$ with $\beta$ not a square. If $v_{\fP}(\alpha) > 6$, then $v_{\fP}(\gamma^2)=v_{\fP}(\alpha + \beta)=0$ and $\beta \equiv \gamma^2 \mod 2^6$. Consider the field extension $L=K(\sqrt{\beta})$. We will show that $L$ is unramified at $2$, hence contradicting $2 \nmid h_K^+$.
Consider the element $\delta:=\frac{\gamma+\sqrt{\beta}}{2}$. Its minimal polynomial is
$$ m_{\delta}(X) = X^2-\gamma X+ \frac{\gamma^2-\beta}{4}.
$$
This belongs to $\mathcal{O}_K[X]$ and has odd discriminant $\Delta= \beta$, proving that $L$ is unramified at $2$.
Thus, we must have $v_{\fP}(\alpha)\leq6$, giving $v_{\fP}(\frac{\alpha}{\beta})=v_{\fP}(\alpha)\leq6$.
\\
\textbf{Case (3):} Suppose $\beta$ is a square. By dividing everything through $\beta$, we may assume $\beta=1$. Then, by the hypothesis of the theorem we get \[v_{\fP}(\frac{\alpha}{\beta})=v_{\fP}(\alpha)\leq6.\]
\\
All of the possible three cases lead to $v_{\fP}(\frac{\alpha}{\beta})\leq 6=6v_{\fP}(2)$, so we can conclude the proof by Theorem \ref{main1}.
\end{proof}

\subsection{Proof of Theorem \ref{2quad}}
\begin{proof}\label{pf2q}
Note that the assumption $d \equiv 5 \mod 8$ gives that $2$ is inert in the quadratic field $K=\mathbb{Q}(\sqrt{d})$, take $\fP $ to be the unique prime above $2$ and denote $S_K= \{ \fP \}$.
Moreover, $d$ prime is equivalent to $2 \nmid h_K^+$ \cite[Section 1.3.1]{L}.
By Theorem \ref{2inert} it is enough to check that every solution $(\alpha, \gamma) \in \mathcal{O}^*_{S_K}\times \cO_{S_K}$ with $v_{\mathfrak{P}}(\alpha)\geq 0$ to the equation
$
\alpha + 1 = \gamma^2
$
satisfies $v_{\fP}(\alpha)\leq 6$.
Rearranging the above we get that $(\gamma+1)(\gamma-1)=\alpha$. Denote $x=\frac{(\gamma +1)}{2}$ and $y=\frac{(1-\gamma )}{2}$. 
Note that since $(\gamma+1), (\gamma-1) \in \mathcal{O}_{S_K}$ and they are factors of the $S_K$-unit $\alpha$, they must be $S_K$-units, consequently $x,y \in \mathcal{O}_{S_K}^*$.\par
In \cite[p. 15]{SN}, it is proved that the only solutions of $S_K$-unit equation $x+y=1$, where $K=\mathbb{Q}(\sqrt{d})$ with $d \equiv 5 \mod 8$, $d>5$ and $S_K= \{ \fP \}$ are the so-called \textit{irrelevant} solutions $(-1,2), (1/2,1/2), (2,-1)$. This leads to $\alpha \in \{-1,8\}$, and hence $v_{\fP}(\alpha) \in \{0,3\}$, proving $v_{\fP}(\alpha)\leq 6$. Thus we can conclude the proof by Theorem \ref{2inert}.
\end{proof}
\subsection{Proof of Theorem \ref{2local}}
\begin{proof}
We will take $\fP $ to be the unique prime above $2$ and denote $S_K= \{ \fP \}$. By Theorem \ref{2inert} it is enough to check that every solution $(\alpha, \gamma) \in \mathcal{O}^*_{S_K}\times \cO_{S_K}$ with $v_{\mathfrak{P}}(\alpha)\geq 0$ to the equation
$
\alpha + 1 = \gamma^2
$
satisfies $v_{\fP}(\alpha)\leq 6$.
Rearranging as in (\ref{pf2q}) we get an $S_K$-unit equation $x+y=1$ such that $\alpha =- 4xy$. \par We will now use a result proved in \cite[p.5]{SKN}. saying that if $K$ satisfies the hypothesis of Theorem \ref{2local}, it follows that every solution $(x,y)$ of the $S_K$-unit equation satisfies $\max\{v_{\fP}(x),v_{\fP}(y)\}<2v_{\fP}(2)=2$. Thus, $$v_{\fP}(\alpha)=2v_{\fP}(2)+v_{\fP}(x)+v_{\fP}(y)<2+2+2=6.$$ Hence we can conclude the proof by Theorem \ref{2inert}.
\end{proof}

\section{Signature  \texorpdfstring{$(p,p,3)$}{TEXT}}\label{section4}

Let $K$ be a totally real field. Recall the set
$S_K=\{\mathfrak{P}\: : \mathfrak{P} \text{ is a prime of } K \text{ above } 3\}.$ Throughout this section we denote by $(a,b,c)\in \mathcal{O}_K^3$ a non-trivial, primitive solution of $a^p+b^p=c^3$.
\subsection{Frey Curve}

For $(a,b,c)\in \mathcal{O}_K^3$ as described above we associate the following Frey elliptic curve defined over $K$:
\begin{equation}\label{Frey3}
    E: Y^2+3cXY+a^pY=X^3.
\end{equation}
We compute the arithmetic invariants:
 $$\Delta_E= 3^3(a^3b)^p,\: c_4=3^2c(9b^p+a^p) \text{ and }j_E=3^3 \frac{c^3(9b^p+a^p)^3}{(a^3b)^p}.$$
\begin{lemma} \label{lemmane2}
Let $(a, b, c)$ be the non-trivial, primitive solution to the equation $a^p+b^p=c^3$. Let $E$ be the associated Frey curve (\ref{Frey3}) with conductor $\mathcal{N}_E$. Then, for all primes $\fq\notin S_K$, the model $E$ is minimal, semistable and satisfies $p|v_{\mathfrak{q}}(\Delta_E)$. Moreover
\begin{equation}\label{ne2}
\mathcal{N}_E=\displaystyle\prod_{\mathfrak{P}\in S_K}\mathfrak{P}^{r_{\mathfrak{P}}} \prod_{\substack{\mathfrak{q}|ab\\ \mathfrak{q}\notin S_K}}\mathfrak{q}, \qquad
\mathcal{N}_p=\displaystyle\prod_{\mathfrak{P}\in S_K}\mathfrak{P}^{r'_{\mathfrak{P}}}
\end{equation}
where $0\leq r'_{\mathfrak{P}} \leq r_{\mathfrak{P}}\leq 2+ 3v_{\mathfrak{P}}(3)$.
\end{lemma}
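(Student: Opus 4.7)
The plan is to mirror the proof of Lemma \ref{lemmane} line by line, replacing the role of the prime $2$ with that of $3$ and using the invariants of the new Frey curve (\ref{Frey3}); the structural argument is identical. First I would fix an arbitrary prime $\mathfrak{q}\notin S_K$ (so $\mathfrak{q}\nmid 3$) and suppose $\mathfrak{q}\mid \Delta_E = 3^3(a^3 b)^p$. Since $\mathfrak{q}\nmid 3$, this forces $\mathfrak{q}\mid ab$, and by primitivity of $(a,b,c)$, $\mathfrak{q}$ divides exactly one of $a$ and $b$.

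The key local check is that $\mathfrak{q}\nmid c_4 = 3^2 c(9b^p + a^p)$. Since $\mathfrak{q}\nmid 3$, it suffices to verify that $\mathfrak{q}\nmid c$ and $\mathfrak{q}\nmid 9b^p+a^p$. A division $\mathfrak{q}\mid c$ together with $\mathfrak{q}\mid ab$ would, via the equation $a^p+b^p=c^3$, force $\mathfrak{q}$ to divide both $a$ and $b$, contradicting primitivity. On the other hand, if $\mathfrak{q}\mid a$ then $\mathfrak{q}\nmid b$ and hence $\mathfrak{q}\nmid 9b^p+a^p$, and symmetrically if $\mathfrak{q}\mid b$. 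This yields minimality and multiplicative reduction of $E$ at $\mathfrak{q}$, and moreover $v_{\mathfrak{q}}(\Delta_E)=p\cdot v_{\mathfrak{q}}(a^3 b)$ is manifestly divisible by $p$.

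For $\mathfrak{P}\in S_K$ I would invoke the standard bound on the conductor exponent at primes above $3$, namely $r_{\mathfrak{P}}=v_{\mathfrak{P}}(\mathcal{N}_E)\leq 2+3v_{\mathfrak{P}}(3)$, which is the $3$-adic counterpart of \cite[Theorem IV.10.4]{ASilv} used in Lemma \ref{lemmane}. Assembling these contributions gives the claimed form of $\mathcal{N}_E$ in (\ref{ne2}), where the outside-$S_K$ primes enter with exponent $1$ by multiplicative reduction. Finally, using the definition (\ref{condNM}), the primes $\mathfrak{q}\notin S_K$ with $\mathfrak{q}\mid\mathcal{N}_E$ all satisfy $\mathfrak{q}\|\mathcal{N}_E$ and $p\mid v_{\mathfrak{q}}(\Delta_{\mathfrak{q}})$, so they are absorbed into $\mathcal{M}_p$, leaving $\mathcal{N}_p$ supported only on $S_K$ with $r'_{\mathfrak{P}}=r_{\mathfrak{P}}$ except when $E$ is multiplicative at $\mathfrak{P}$ with $p\mid v_{\mathfrak{P}}(\Delta_{\mathfrak{P}})$ (in which case $r_{\mathfrak{P}}=1$ and $r'_{\mathfrak{P}}=0$), giving $0\leq r'_{\mathfrak{P}}\leq r_{\mathfrak{P}}$ as required. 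No real obstacle arises; the only point worth care is citing the correct $3$-adic conductor-exponent bound, on which the argument depends in a black-box fashion.
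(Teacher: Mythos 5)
Your proposal is correct and follows essentially the same route as the paper, which simply argues exactly as in Lemma \ref{lemmane}: a prime $\mathfrak{q}\notin S_K$ dividing $\Delta_E$ divides exactly one of $a,b$ by primitivity, hence not $c_4$, giving minimality and multiplicative reduction with $p\mid v_{\mathfrak{q}}(\Delta_E)$, while the exponent bound at $\mathfrak{P}\in S_K$ comes from the $3$-adic case of \cite[Theorem IV.10.4]{ASilv} and the passage to $\mathcal{N}_p$ from (\ref{condNM}). Your additional check that $\mathfrak{q}\nmid c$ (needed because $c_4=3^2c(9b^p+a^p)$ has the extra factor $c$ absent in the signature $(p,p,2)$ case) is exactly the small detail the paper leaves implicit, and you handle it correctly via the equation and coprimality.
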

\begin{proof}
The proof follows exactly like the proof of Lemma \ref{lemmane}.
\end{proof}
\begin{lemma} \label{modularity4}
Let $K$ be a totally real field. There is some constant $A_K$ depending only on $K$, such that for any non-trivial, primitive solution $(a,b,c)$ of $a^p+b^p=c^3$ the Frey curve given by (\ref{Frey3}) is modular.
\end{lemma}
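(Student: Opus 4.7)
The plan is to adapt the argument of Lemma \ref{modularity3} almost verbatim to the cubic case; only the shape of the rational function parametrising $j_E$ changes. First I would invoke Theorem \ref{modularity} of Freitas, Hung and Siksek to extract a finite list $j_1,\dots,j_n\in K$ of $j$-invariants of the $\bar K$-isomorphism classes of non-modular elliptic curves over $K$.

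Next I would express $j_E$ in terms of the single parameter $\lambda:=b^p/a^p$. Using the Fermat relation $c^3=a^p+b^p$ to eliminate $c$ from the formula $j_E=3^3 c^3(9b^p+a^p)^3/(a^3b)^p$, one obtains
\[
   j(\lambda) \;=\; 27\cdot\frac{(1+\lambda)(1+9\lambda)^3}{\lambda},
\]
a rational function of degree four in $\lambda$. As in the $(p,p,2)$ case I may discard $\lambda\in\{0,-1\}$, since these would force $ab=0$ or $c=0$ (contradicting non-triviality of $(a,b,c)$), and likewise $\lambda=1$, as this yields $j_E\in\mathbb{Q}$, placing $E$ in the $\bar K$-isomorphism class of a rational elliptic curve and hence making it modular by Wiles et al.

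For each $i$ the equation $j(\lambda)=j_i$ is polynomial of degree at most four in $\lambda$ and therefore has at most four solutions in $K$; concatenating these over $i=1,\dots,n$ produces a finite list $\lambda_1,\dots,\lambda_m\in K$ with the property that whenever $\lambda\neq\lambda_k$ for every $k$, the Frey curve $E$ has $j$-invariant outside the non-modular list and so $E$ is modular. In the remaining exceptional case $\lambda=\lambda_k$ one has $(b/a)^p=\lambda_k$, i.e.\ $\lambda_k\in(K^*)^p$; since $K$ is totally real the only roots of unity in $K^*$ are $\pm 1$, so a fixed non-root-of-unity element of the finitely generated group $K^*/\mu(K)$ can be a $p$-th power for only finitely many primes $p$. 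Taking $A_K$ to be the maximum of these bounds over $k=1,\dots,m$ yields the required constant. The main obstacle, identical to that in Lemma \ref{modularity3}, is the ineffectivity of Theorem \ref{modularity} (which rests on Faltings' theorem), rendering $A_K$ non-explicit; however, for $[K:\mathbb{Q}]\le 3$ we may take $A_K=0$ by Theorem \ref{modularity2} and the last clause of Theorem \ref{modularity}.
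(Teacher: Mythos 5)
Your proposal is correct and follows essentially the same route as the paper, which simply states that the proof is identical to that of Lemma~\ref{modularity3} with the obvious modifications (you have helpfully worked out the rational function $j(\lambda)=27(1+\lambda)(1+9\lambda)^3/\lambda$ and noted it has degree four, giving at most four preimages of each $j_i$). One small slip: the group $K^*/\mu(K)$ is \emph{not} finitely generated, so your phrasing there is off; the correct justification that a fixed $\lambda_k\notin\{0,\pm1\}$ lies in $(K^*)^p$ for only finitely many $p$ uses the factorisation of the fractional ideal $(\lambda_k)$ (forcing $\lambda_k$ to be a unit for $p$ large) together with the Dirichlet unit theorem (so that a non-torsion unit has only finitely many prime-power roots) — but this is the same finiteness the paper itself invokes without spelling it out, so it is a cosmetic rather than a substantive gap.
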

\begin{proof}
The proof follows exactly like the proof of Lemma \ref{modularity3}.
\end{proof}
\subsection{Images of Inertia}
We need the following result about images of inertia whose prove follows exactly like the proof of Lemma \ref{imgi}, hence it is omitted.
\begin{lemma} \label{imgi2}
Let ${\mathfrak{P}} \in S_K$ and $(a,b,c) $ with $\mathfrak{P}|b$ and prime exponent $p>3v_{\mathfrak{P}}(3)$. Let $E$ be the Frey curve in $(\ref{Frey3})$ with $j$-invariant $j_E$. Then $E$ has potentially multiplicative reduction at ${\mathfrak{P}}$ and $p | \# \rep_{E,p}(I_{\mathfrak{P}})$.
\end{lemma}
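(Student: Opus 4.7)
The proof will mirror that of Lemma \ref{imgi}, simply with the $(p,p,3)$ Frey curve and its $j$-invariant in place of the $(p,p,2)$ data. The plan is to apply Lemma \ref{lemma} to $\mathfrak{P}$, so I need to verify the two hypotheses: that $E$ has potentially multiplicative reduction at $\mathfrak{P}$ (i.e.\ $v_{\mathfrak{P}}(j_E) < 0$), and that $p \nmid v_{\mathfrak{P}}(j_E)$. I should also check in passing that $\mathfrak{P} \nmid p$, which is automatic since $\mathfrak{P}$ lies above $3$ and $p > 3 v_{\mathfrak{P}}(3) \geq 3$ forces $p \geq 5$, in particular $p \neq 3$.

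First I would exploit primitivity: since $(a,b,c)$ is primitive and $\mathfrak{P} \mid b$, neither $a$ nor $c$ is divisible by $\mathfrak{P}$, so $v_{\mathfrak{P}}(a) = v_{\mathfrak{P}}(c) = 0$. Setting $k := v_{\mathfrak{P}}(b) \geq 1$, I now compute $v_{\mathfrak{P}}(j_E)$ directly from the formula $j_E = 3^3 c^3 (9b^p + a^p)^3 / (a^3 b)^p$. Since $v_{\mathfrak{P}}(a^p) = 0$ while $v_{\mathfrak{P}}(9 b^p) = 2 v_{\mathfrak{P}}(3) + p k > 0$, the factor $9b^p + a^p$ is a $\mathfrak{P}$-unit, so
\[
v_{\mathfrak{P}}(j_E) \;=\; 3 v_{\mathfrak{P}}(3) + 0 + 0 - p k \;=\; 3 v_{\mathfrak{P}}(3) - pk.
\]

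Next I use the hypothesis $p > 3 v_{\mathfrak{P}}(3)$ together with $k \geq 1$ to get $v_{\mathfrak{P}}(j_E) < 0$, so $E$ has potentially multiplicative reduction at $\mathfrak{P}$. For the divisibility condition, note that $v_{\mathfrak{P}}(j_E) \equiv 3 v_{\mathfrak{P}}(3) \pmod{p}$, and the inequality $0 < 3 v_{\mathfrak{P}}(3) < p$ shows $p \nmid v_{\mathfrak{P}}(j_E)$. Applying Lemma \ref{lemma} then yields $p \mid \#\overline{\rho}_{E,p}(I_{\mathfrak{P}})$, completing the proof.

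There is essentially no obstacle here: the key calculation is the explicit evaluation of $v_{\mathfrak{P}}(j_E)$, which is immediate from the arithmetic invariants recorded just before Lemma \ref{lemmane2}, and the numerical bound $p > 3 v_{\mathfrak{P}}(3)$ has been chosen precisely to make both required inequalities hold. This is exactly why the authors simply state that the argument is the same as for Lemma \ref{imgi}.
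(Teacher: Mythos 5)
Your proposal is correct and is exactly the argument the paper intends: the paper omits the proof of Lemma \ref{imgi2} saying it is identical to that of Lemma \ref{imgi}, and your computation $v_{\mathfrak{P}}(j_E)=3v_{\mathfrak{P}}(3)-pk$ (using that $9b^p+a^p$ is a $\mathfrak{P}$-unit by primitivity) followed by the appeal to Lemma \ref{lemma} is precisely that adaptation. The auxiliary checks ($p\geq 5$, $\mathfrak{P}\nmid p$, $p\nmid v_{\mathfrak{P}}(j_E)$) are all handled correctly.
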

\subsection{Level Lowering and Eichler Shimura}
As in the previous section, the crucial level lowering theorem reads as follows:
\begin{theorem}\label{LL3}
Let $K$ be a totally real number field and assume it has a distinguished prime $\tilde{\mathfrak{P}} \in S_K$. Then there is a constant $B_K$
depending only on $K$ such that the following hold. Suppose $(a,b,c) \in \cO_K^3$ is a non-trivial, primitive solution to $a^p+b^p=c^3$ with prime exponent $p>B_K$ such that $\tilde{\fP}|b$. Write $E$
for the Frey curve (\ref{Frey3}). Then, there is an elliptic curve $E'$ over $K$ such that:
\begin{enumerate}
    \item the elliptic curve $E'$ has good reduction outside $S_K$,
   \item $\overline{\rho}_{E,p}\sim\overline{\rho}_{E',p}$,
     \item $E'$ has a $K$-rational point of order $3$,
    \item $E'$ has potentially multiplicative reduction at $\tilde{\mathfrak{P}}$ $(v_{\tilde{\mathfrak{P}}}(j_{E'})<0$ where $j_{E'}$ is the $j$-invariant of $E')$.
\end{enumerate}
\end{theorem}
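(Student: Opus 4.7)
The proof will parallel verbatim the structure of Theorem~\ref{LL} for signature $(p,p,2)$, with the obvious substitutions (primes above $3$ instead of $2$, the Frey curve~(\ref{Frey3}), and the corresponding auxiliary lemmas from Section~\ref{section4}). The plan is to first run level-lowering for $E$ to produce a Hilbert newform of parallel weight $2$, then upgrade it via Eichler--Shimura to an elliptic curve $E'/K$ having the same mod $p$ representation as $E$, and finally transfer the rational $3$-torsion and the potentially multiplicative reduction at $\tilde{\fP}$ from $E$ to $E'$.

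In detail, I would first invoke Lemma~\ref{lemmane2} to see that $E$ has multiplicative reduction at every prime $\fq\notin S_K$ with $p\mid v_{\fq}(\Delta_E)$, and that its conductor has the form~(\ref{ne2}). Then, after enlarging $B_K$, Lemma~\ref{modularity4} ensures $E$ is modular and Theorem~\ref{irred} ensures $\rep_{E,p}$ is irreducible; thus the hypotheses of Theorem~\ref{ll} are met, yielding a Hilbert eigenform $\mathfrak{f}$ of parallel weight $2$, new at level $\mathcal{N}_p$, with $\rep_{E,p}\sim \rep_{\mathfrak{f},\varpi}$ for some $\varpi\mid p$. A further enlargement of $B_K$ reduces to the case $\mathbb{Q}_{\mathfrak{f}}=\mathbb{Q}$ using the standard Mazur-style argument referenced in the proof of Theorem~\ref{LL}. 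Now Lemma~\ref{imgi2} applied at $\tilde{\fP}$ (using the hypothesis $p>3v_{\tilde{\fP}}(3)$, absorbed into $B_K$) yields $p\mid \#\rep_{E,p}(I_{\tilde{\fP}})$ and potentially multiplicative reduction of $E$ at $\tilde{\fP}$. Enlarging $B_K$ once more so that $p\nmid(\mathrm{Norm}_{K/\mathbb{Q}}(\tilde{\fP})\pm 1)$, Theorem~\ref{inertia} produces an elliptic curve $E'/K$ of conductor $\mathcal{N}_p$ with the same $L$-function as $\mathfrak{f}$, hence with $\rep_{E,p}\sim\rep_{E',p}$. Since $\mathcal{N}_p$ is supported on $S_K$, this proves (i) and (ii).

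For (iii), note that the Frey curve $E$ itself has the $K$-rational $3$-torsion point $(0,0)$, inherited from the defining model~(\ref{Frey3}) (which is exactly of the form~(\ref{torsion3})). Using the same standard arguments cited in the proof of Theorem~\ref{LL} (see \cite[Section IV-6]{Serre}), after possibly replacing $E'$ by a $K$-isogenous curve and enlarging $B_K$ by an effective amount, we may arrange that $E'$ carries a $K$-rational point of order $3$; the only change from the $(p,p,2)$ case is that we track a cyclic subgroup of order $3$ rather than of order $2$ in the isogeny class of $E'$. Finally, (iv) is immediate: since $\rep_{E,p}\sim\rep_{E',p}$, Lemma~\ref{imgi2} gives $p\mid \#\rep_{E',p}(I_{\tilde{\fP}})$, and then Lemma~\ref{lemma} forces $v_{\tilde{\fP}}(j_{E'})<0$.

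The only step that is not entirely routine is the rational $3$-torsion reduction in~(iii): for signature $(p,p,2)$ one exploits that mod $p$ reducibility considerations for $p$-isogenies interact cleanly with $2$-isogenies, and one must check that the analogous bookkeeping in the $3$-isogeny world still goes through for $p$ beyond an effective bound. I expect this to be the main technical point, but it poses no new obstacle beyond an appropriate effective invocation of the Mazur--Merel-type uniform boundedness results already used in the $(p,p,2)$ case. Everything else is a direct translation of the proof of Theorem~\ref{LL}.
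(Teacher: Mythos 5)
Your proposal is correct and takes essentially the same approach as the paper: the paper's own proof of Theorem~\ref{LL3} is literally the single sentence ``The proof follows exactly like the proof of Theorem~\ref{LL},'' and your detailed translation (substituting Lemma~\ref{lemmane2}, Lemma~\ref{modularity4}, Lemma~\ref{imgi2}, the Frey curve~(\ref{Frey3}), and order-$3$ torsion throughout) is exactly what that sentence means. The one thing you flag as ``the main technical point''---arranging $K$-rational $3$-torsion on an isogenous curve in~(iii)---the paper treats as routine, citing the same standard Serre-style isogeny argument as in Theorem~\ref{LL}; your caution there is reasonable but does not reflect a gap the paper considers nontrivial.
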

\begin{proof}
The proof follows exactly like the proof of Theorem \ref{LL}.
\end{proof}

\subsection{Proof of Theorem \ref{main2}}
\begin{proof}
Given a primitive, non-trivial solution $(a,b,c)$ such that $\tilde{\mathfrak{P}}|b$ with a prime exponent $p$ we associate the Frey elliptic curve in (\ref{Frey3}).
By Theorem \ref{LL3} for $p>B_K$ we can find an elliptic curve $E'$ which is related to $E$ by $\overline{\rho}_{E,p}\sim\overline{\rho}_{E',p}$ and has a $K$-rational point of order $3$. Hence by Lemma \ref{torsion} (ii) we get a model
$$ E': Y^2+c'XY+d'Y=X^3
$$
with arithmetic invariants $\Delta_{E'}=d'^3(c'^3-27d')$ and $j_{E'}=\frac{c'^3(c'^3-24d')^3}{d'^3(c'^3-27d')}$.

Moreover, by Theorem \ref{LL} (i), we know that $E'$ has good reduction outside $S_K$ which implies that $v_{\mathfrak{q}}(j_{E'})\geq 0$ for $\mathfrak{q}\notin S_K$. Therefore, $j_{E'} \in  \mathcal{O}_{S_K}$.
Consider $\lambda:= \frac{c'^3}{d'}$ and $\mu:=\lambda-27=\frac{c'^3-27d'}{d'}$. 
Next, we need to show that $\lambda$ can be written as $\lambda = u\gamma^3$, where $u$ is an $S_K$-unit.
By Lemma \ref{mainelliptic} (ii) applied to $E'$ we get that $$(\lambda) \cO_K = I^3J \text{ where } J \text{ is an } S_K \text{-ideal}. $$
Thus $[I]^3=[J]$ as elements of the class group Cl($K$) and $[J]\in \langle[{\mathfrak{P}}]\rangle_{\mathfrak{P}\in S_K}$. This implies that $[I] \in Cl_{S_K}(K)[3]$ and by our assumption on $K$ that $Cl_{S_K}(K)[3]$ is trivial, we get that $[I] \in \langle[{\mathfrak{P}}]\rangle_{\mathfrak{P}\in S_K}$, i.e. $I:=\gamma \Tilde{I}$, where $\Tilde{I}$ is an $S_K$-ideal and $\gamma \in \cO_K$.
Consequently, $$(\lambda)\mathcal{O}_K=(\gamma)^3 \Tilde{I}^3J \text{ where both }\Tilde{I} \text{ and } J \text{ are }S_K \text{-ideals.}$$
Finally, $(\frac{\lambda}{\gamma^3})\mathcal{O}_K$ is an $S_K$-ideal, which implies that $u:=\frac{\lambda}{\gamma^3}$ is an $S_K$-unit.
Now, by dividing $\mu + 27 = \lambda \text{ by }u$, we get
\begin{equation}\label{ab1} 
    \alpha+\beta=\gamma^3, \qquad \alpha:=\frac{\mu}{u} \in \mathcal{O}_{S_K}^*, \qquad \beta:= \frac{27}{u} \in \mathcal{O}_{S_K}^*
\end{equation}
Now, suppose that there is some $\tilde{\mathfrak{P}} \in S_K$ that satisfies $|v_{\tilde{\mathfrak{P}}}(\frac{\alpha}{\beta})|\leq 3v_{\tilde{\mathfrak{P}}}(3)$. We will show that $v_{\tilde{\mathfrak{P}}}(j_{E'}) \geq 0$, contradicting Theorem \ref{LL3} (iv) and hence we can conclude the proof.
By using \eqref{ab1} we can rewrite the assumption $|v_{\tilde{\mathfrak{P}}}(\frac{\alpha}{\beta})|\leq 3v_{\tilde{\mathfrak{P}}}(3)$ in terms of the valuation of $\mu$, using that $\frac{\alpha}{\beta}=\frac{\mu}{27}$:

 $$0\leq v_{\tilde{\mathfrak{P}}}(\mu)\leq 6v_{\tilde{\mathfrak{P}}}(3).$$ 
Note that $j_{E'}=(\mu+27)(\mu+3)^3\mu^{-1}$, hence
 $$v_{\tilde{\mathfrak{P}}}(j_{E'})= v_{\mathfrak{P}}(\mu+27)+3v_{\mathfrak{P}}(\mu+3)-v_{\mathfrak{P}}(\mu).$$ There are three cases according to the valuation of $\tilde{\mathfrak{P}}$ at $\mu$:
 \\
\textbf{Case (1):} Suppose $0\leq v_{\tilde{\mathfrak{P}}}(\mu)\leq v_{\tilde{\mathfrak{P}}}(3)$. This implies that $v_{\tilde{\mathfrak{P}}}(\mu+27)=v_{\tilde{\mathfrak{P}}}(\mu)$ and $v_{\tilde{\mathfrak{P}}}(\mu+3)\geq v_{\tilde{\mathfrak{P}}}(\mu)$, thus $v_{\tilde{\mathfrak{P}}}(j_{E'})\geq0$.
\\
\textbf{Case (2):} Suppose $v_{\tilde{\mathfrak{P}}}(3)< v_{\tilde{\mathfrak{P}}}(\mu)\leq 3v_{\tilde{\mathfrak{P}}}(3)$. This implies that $v_{\tilde{\mathfrak{P}}}(\mu+27)\geq v_{\tilde{\mathfrak{P}}}(\mu)$ and $v_{\tilde{\mathfrak{P}}}(\mu+3)= v_{\tilde{\mathfrak{P}}}(3)$, thus we get again $v_{\tilde{\mathfrak{P}}}(j_{E'})\geq0$.
\\
\textbf{Case (3):} Suppose $3v_{\tilde{\mathfrak{P}}}(3)< v_{\tilde{\mathfrak{P}}}(\mu)\leq 6v_{\tilde{\mathfrak{P}}}(3)$. This implies that $v_{\tilde{\mathfrak{P}}}(\mu+27) = 3v_{\tilde{\mathfrak{P}}}(3)$ and $v_{\tilde{\mathfrak{P}}}(\mu+3)= v_{\tilde{\mathfrak{P}}}(3)$, thus we get one last time $v_{\tilde{\mathfrak{P}}}(j_{E'})\geq0$.\\
All three cases lead to contradictions and hence we conclude the proof.
\end{proof}
\subsection{Proof of Theorem \ref{3inert}}
\begin{proof}
We want to apply Theorem \ref{main2} with $\tilde{\fP}=\fP$ and $S_K=\{ \fP \}$. As $3$ is inert, we get $v_{\fP}(3)=1$.

Now, let us consider the equation $\alpha + \beta = \gamma^3$, with $\alpha, \beta \in \mathcal{O}_{S_K}^*$. By scaling the equation by triple powers of $3$ and swapping $\alpha$ and $\beta$ if necessary, we may assume $0 \leq v_{\fP}(\beta)\leq v_{\fP}(\alpha)$ with $v_{\fP}(\beta) \in \{0,1,2 \}$. Also, we can assume that $\beta$ is positive, otherwise we multiply everything by $-1$.
\\
\textbf{Case (1):} Suppose $v_{\fP}(\beta)=2$. If $v_{\fP}(\alpha)\geq 3$, then $v_{\fP}(\gamma^3)=v_{\fP}(\alpha + \beta)=2$, which leads to a contradiction as $v_{\fP}(\gamma^3)$ must be a multiple of $3$. Thus, $v_{\fP}(\alpha)=v_{\fP}(\beta)=2$ and $v_{\fP}(\frac{\alpha}{\beta})=0<3$.
\\
\textbf{Case (2):} Suppose $v_{\fP}(\beta)=1$. If $v_{\fP}(\alpha)\geq 2$, then $v_{\fP}(\gamma^3)=v_{\fP}(\alpha + \beta)=1$, which leads to a contradiction as $v_{\fP}(\gamma^3)$ must be a multiple of $3$. Thus, $v_{\fP}(\alpha)=v_{\fP}(\beta)=1$ and $v_{\fP}(\frac{\alpha}{\beta})=0<3$.
\\
\textbf{Case (3):} Suppose $v_{\fP}(\beta)=0$ with $\beta $ not a cube. If $v_{\fP}(\alpha) > 3$, then $v_{\fP}(\gamma^3)=0$ and $\beta \equiv \gamma^3 \mod 3^4$. Consider the field extension $L=K(\sqrt[3]{\beta},\omega)$ of $K(\omega)$. We will show that $L$ is unramified at $3$, hence contradicting $3 \nmid h_{K(\omega)}$.

Consider the element $\delta:=\frac{\gamma^2+\gamma\omega\sqrt[3]{\beta}+\omega^2 \sqrt[3]{\beta}}{3}$. Its minimal polynomial is
$$ m_{\delta}(X) = X^3+ \gamma \frac{\gamma^3-\beta}{3}X^2-\gamma^2X + \frac{(\gamma^3-\beta)^2}{27}.
$$
This belongs to $\mathcal{O}_K[X]$ and has discriminant 
$$\Delta = -2\gamma^3\frac{(\gamma^3-\beta)^3}{3^5}-4\gamma^3\frac{(\gamma^3-\beta)^5}{3^9}+\gamma^6\frac{(\gamma^3-\beta)^2}{3^2}-4\gamma^6-\frac{(\gamma^3-\beta)^4}{3^3}.$$
We can deduce that $\Delta \equiv -4\gamma^6 \mod 3$, proving that $L$ is unramified at $3$.
Thus, we must have $v_{\fP}(\alpha)\leq 3$, giving $v_{\fP}(\frac{\alpha}{\beta})=v_{\fP}(\alpha)\leq 3$.
\\
\textbf{Case (4):} Suppose $\beta$ is a cube. By dividing everything through $\beta$, we can assume that $\beta = 1$.
Then by the hypothesis of the theorem, we get $v_{\fP}(\frac{\alpha}{\beta})=v_{\fP}(\alpha)\leq 3$.\\
All of the possible four cases lead to $v_{\fP}(\frac{\alpha}{\beta})\leq 3=3v_{\fP}(3)$, so we can conclude the proof by Theorem \ref{main2}.
\end{proof}
\subsection{Proof of Theorem \ref{3quad}}

\begin{proof}\label{pf3q}
Note that $d \equiv 2 \mod 3$ gives that $3$ is inert in the quadratic field $K=\mathbb{Q}(\sqrt{d})$, take $\fP $ to be the unique prime above $3$ and denote $S_K= \{ \fP \}$. By Theorem \ref{3inert} it is enough to check that every solution $(\alpha, \gamma) \in \mathcal{O}^*_{S_K}\times \cO_{S_K}$ with $v_{\mathfrak{P}}(\alpha)\geq 0$ to the equation
$
\alpha + 1 = \gamma^3
$
satisfies $v_{\fP}(\alpha)\leq 3$.\par 
Assume by a contradiction that we have a solution $\alpha$ to the above equation such that $v_{\fP}(\alpha)>3$. This implies that $v_{\fP}(\gamma)=0$, giving $\gamma \in \cO_K$.\\
Rearranging we get that $(\gamma-1)(\gamma-\omega)(\gamma-\omega^2)=\alpha$ when viewed over $L:=K(\omega)$. In the new field extension $L$ we have that $(3)\mathcal{O}_L=(\omega-1)^2\mathcal{O}_L$. We take $\fp=(\omega-1)\mathcal{O}_L$ and $S_L=\{ \fp \}$.
Denote $x:=\gamma -1,\: y:=\gamma-\omega,\: z:=\gamma-\omega^2$ and observe that
\begin{equation}\label{3eq}
    \begin{cases}

        x-y=(\omega-1) \\
        y-z=\omega(\omega-1)\\
       
\end{cases}
\end{equation}
Note that $x, y, z \in \mathcal{O}_{S_L}$ and they are factors of the $S_K$-unit $\alpha$, hence they must be $S_L$-units.\par 
Consider $\tau \in\text{Gal}(L/K)$ such that $\tau(\omega)=\omega^2$. It is easy to see that 
$$ \tau(x)=x ,\:\:\: \tau(y) = z \text{ and } \tau(\fp)=\fp.
$$
This implies that $v_{\fp}(y)=v_{\fp}(z)=:r$. We will show that $r=1$. Firstly note that by (\ref{3eq}) we get that  $1=v_{\fp}(\omega(\omega-1))=v_{\fp}(y-z)\geq r$. Suppose $r\leq 0$. Then $v_{\fp}(x)\geq v_{\fp}(xyz)=v_{\fp}(\alpha)\geq 8$ since $3^4|\alpha$.
Then, by using (\ref{3eq}) again, we will get $1=v_{\fp}(\omega-1)=v_{\fp}(x-y)=r\leq 0$, a contradiction. So, $r$ must be exactly $1$. As
$v_{\fp}(\alpha)=v_{\fp}(xyz)=8$, we must have $v_{\fp}(x)=6$.
Consider now 
$$u:=\frac{x}{\omega-1} \text{ and } v=\frac{-y}{\omega-1}.$$ 
By the above discussion, we will get that $\fp^5|u$ and $v \in \mathcal{O}_L^*$. Denote $F:= \mathbb{Q}(\omega)$.
As $v$ is a unit, we must have
\begin{equation}\label{Norm}
    \N_{L/F}(v) \in \cO_{F}^* = \langle \omega + 1\rangle
\end{equation}
As $u+v=1$, we get that $v \equiv 1 \mod 3$. Let $\sigma$ be the generator of $\text{Gal}(L/F)$.
By noting that $3|\sigma(u)$, we get that $\sigma(v) \equiv 1 \mod 3$ and consequently $\N_{L/F}(v)=v\sigma(v)\equiv 1 \mod 3$. This and (\ref{Norm}) give $\N_{L/F}(v)=1$. 
Suppose that $v \in \cO_L^*\setminus \cO_K^*=\omega\cO_K^*$, then $ \omega|\N_{L/F}(v) $ contradicting  $\N_{L/F}(v)=1$. Thus $v \in \cO_K^*$ giving $u=1-v \in \cO_K$ which is a contradiction as $u$ is a ratio of a $K$-integer and $\omega-1 \notin K$.
\end{proof}
\subsection{Proof of Theorem \ref{3local}}
We first need to prove some preliminary lemmas. Throughout this section, $K$ denotes a totally real field of degree $n$, $L:=K(\omega)$ and $F:=\mathbb{Q}(\omega)$. Moreover, $K$ satisfies the conditions (i), (ii), (iii) and (iv) in the statement of Theorem \ref{3local}. More precisely let $q$ be the prime which totally ramifies in $K$. Note that $q\geq5$ so it is inert in $F$. Denote $\tilde{\fq}:= (q)\cO_F$ and take $\fq$ to be the unique prime above $q$ in $L$, so that $(q)\cO_L=\fq^n \cO_L$. Take $\fP $ to be the unique prime above $3$ in $K$ and denote $S_K= \{ \fP \}$. In $L$ we have that $(3)\mathcal{O}_L=(\omega-1)^2\mathcal{O}_L$. We take $\fp=(\omega-1)\mathcal{O}_L$ and $S_L=\{ \fp \}$.
\begin{lemma}\label{L1}
Let $\lambda \in \cO_L$, then there exists $\beta \in \mathbb{Z}[\omega]$ such that $\lambda \equiv b \mod \fq$ and 
\begin{equation}\label{norm}
    \textup{Norm}_{L/F}(\lambda)\equiv b^n \mod \tilde{\fq}.
\end{equation}

\end{lemma}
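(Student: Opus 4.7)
The plan is to first establish that the natural inclusion $\cO_F/\tilde{\fq}\hookrightarrow\cO_L/\fq$ is an isomorphism, which immediately produces $\beta\in\mathbb{Z}[\omega]=\cO_F$ with $\lambda\equiv \beta\pmod{\fq}$; then to prove the norm congruence by passing to the totally ramified local extension $L_{\fq}/F_{\tilde{\fq}}$ and observing that the characteristic polynomial of $\lambda-\beta$ reduces to $X^n$ modulo $\tilde{\fq}$.

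For the residue-field step, since $q\geq 5$ is inert in $F=\mathbb{Q}(\omega)$ we have $\cO_F/\tilde{\fq}\cong\mathbb{F}_{q^2}$. In $L$, the equation $(q)\cO_L=\fq^n$ together with $[L:\mathbb{Q}]=2n$ forces $f(\fq/q)=2$, so $\cO_L/\fq\cong\mathbb{F}_{q^2}$ as well; comparing $e(\fq/q)=n$ with $e(\tilde{\fq}/q)=1$ gives $e(\fq/\tilde{\fq})=n$ and hence $f(\fq/\tilde{\fq})=1$. Thus $\cO_F/\tilde{\fq}\hookrightarrow\cO_L/\fq$ is an injective map of $\mathbb{F}_{q^2}$-algebras between fields of the same finite cardinality, hence an isomorphism, yielding the desired $\beta$.

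For the norm step, because $\fq$ is the unique prime of $L$ above $\tilde{\fq}$, I identify $\N_{L/F}(\lambda)$ with the local norm $\N_{L_{\fq}/F_{\tilde{\fq}}}(\lambda)$ and work inside the totally ramified extension $L_{\fq}/F_{\tilde{\fq}}$ of degree $n$. Setting $\epsilon:=\lambda-\beta\in\fq\cO_{L_{\fq}}=\pi_L\cO_{L_{\fq}}$ for a uniformizer $\pi_L$ of $L_{\fq}$, I expand
\[
\N_{L/F}(\lambda)=\N(\beta+\epsilon)=\beta^n+\sum_{k=1}^{n}\beta^{n-k}s_k(\epsilon),
\]
where $s_k(\epsilon)\in\cO_F$ is, up to sign, the $X^{n-k}$-coefficient of the characteristic polynomial of $\epsilon$ in $L_{\fq}/F_{\tilde{\fq}}$. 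Using the $\cO_{F_{\tilde{\fq}}}$-basis $\{1,\pi_L,\dots,\pi_L^{n-1}\}$ of $\cO_{L_{\fq}}$ and reducing the matrix of multiplication by $\epsilon$ modulo $\tilde{\fq}\cO_{L_{\fq}}=\pi_L^n\cO_{L_{\fq}}$, the endomorphism becomes nilpotent of order at most $n$; its characteristic polynomial is therefore $X^n$, so every $s_k(\epsilon)$ with $k\geq 1$ lies in $\tilde{\fq}$, giving $\N_{L/F}(\lambda)\equiv\beta^n\pmod{\tilde{\fq}}$.

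The main technical point is the final step, translating ``$\epsilon\in\fq$'' into ``reduction of the characteristic polynomial is $X^n$''; once one passes to the completion and uses total ramification of $L_{\fq}/F_{\tilde{\fq}}$, this reduces to the observation that any element of the maximal ideal of $\cO_{L_{\fq}}$ becomes nilpotent in $\cO_{L_{\fq}}/\tilde{\fq}\cO_{L_{\fq}}$. The rest is bookkeeping with the hypothesis on how $q$ sits in $K$, $F$, and $L$.
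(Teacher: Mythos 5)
Your proof is correct, but it takes a genuinely different route from the paper. For the residue-field step you argue that the inclusion $\cO_F/\tilde{\fq}\hookrightarrow\cO_L/\fq$ is an isomorphism by balancing $e\cdot f\cdot g=2n$ against $e(\fq/q)\geq n$ and $f(\fq/q)\geq 2$; the paper simply states $\cO_L/\fq\cO_L\cong\mathbb{Z}[\omega]/q\mathbb{Z}[\omega]$, which amounts to the same observation. The real divergence is in the norm step. You pass to the completion and use that $L_{\fq}/F_{\tilde{\fq}}$ is totally ramified of degree $n$: with $\epsilon=\lambda-b$ lying in the maximal ideal, multiplication by $\epsilon$ becomes nilpotent on the $n$-dimensional $\kappa(\tilde{\fq})$-space $\cO_{L_{\fq}}/\tilde{\fq}\cO_{L_{\fq}}$, so its characteristic polynomial reduces to $X^n$ and every coefficient $s_k(\epsilon)$ with $k\geq 1$ lies in $\tilde{\fq}$, whence $\N_{L/F}(\lambda)\equiv b^n$. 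The paper instead works globally: it extends to the normal closure $\bar{L}$, observes $(\sigma(\fq\cO_{\bar{L}}))^n=\sigma(q\cO_{\bar{L}})=q\cO_{\bar{L}}=(\fq\cO_{\bar{L}})^n$ and extracts $n$-th roots by unique factorisation to conclude $\sigma(\fq\cO_{\bar{L}})=\fq\cO_{\bar{L}}$ for all $\sigma\in\mathrm{Gal}(\bar{L}/F)$; applying $\sigma$ to $\lambda\equiv b\pmod{\fq}$ and multiplying the $n$ resulting congruences gives $\N_{L/F}(\lambda)\equiv b^n\pmod{\fq\cO_{\bar{L}}}$, which descends to $\pmod{\tilde{\fq}}$ via $\cO_F\cap\fq\cO_{\bar{L}}=\tilde{\fq}$. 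Both are sound; the paper's Galois argument is more elementary in avoiding completions and the theory of integral bases of ramified local extensions, whereas your local argument makes the mechanism of total ramification more transparent and would extend readily to replace $X^n$ by a higher congruence if needed.
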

\begin{proof}
Note that $\cO_L/\fq\cO_L \cong \mathbb{F}_q(\omega) \cong \mathbb{Z}[\omega]/q\mathbb{Z}[\omega]$. Thus, there exists $b \in \mathbb{Z}[\omega]$ such that $\lambda \equiv b \mod \fq$. Let $\bar{L}$ be the normal closure of $L$. Take $\sigma \in \text{Gal}(\bar{L}/F)$. Note that $$(\sigma(\fq \cO_{\bar{L}}))^n = \sigma (q\cO_{\bar{L}})=q\cO_{\bar{L}}=(\fq \cO_{\bar{L}})^n.$$
Thus, by the unique factorisation of ideals we get $\sigma(\fq \cO_{\bar{L}})=\fq \cO_{\bar{L}}$. Moreover, by applying $\sigma$ to $\lambda \equiv b \mod \fq$
we get that $\sigma(\lambda)\equiv b \mod \fq \cO_{\bar{L}}$. Finally multiplying everything together
$$\N_{L/F}(\lambda)=\prod_{\sigma}\sigma(\lambda) \equiv b^n \mod \fq \cO_{\bar{L}}.$$
As $\lambda \in \cO_L$, it follows that  $\N_{L/F}(\lambda) \in \cO_F$. Also $b^n \in \cO_F$. Thus, $\N_{L/F}(\lambda)-b^n \in \cO_F \cap \fq \cO_{\bar{L}}=\tilde{q} \cO_F.$ Hence (\ref{norm}) holds.
\end{proof}

\begin{lemma}\label{L2}
Suppose $\lambda \in \cO_L^*$ and (ii) holds, i.e. $\gcd (n, q^2-1) = 1$. Then $(\lambda \mod \fq) \in \langle \omega+1 \rangle =  \{\pm 1, \pm (\omega+1), \pm \omega  \}$.
\end{lemma}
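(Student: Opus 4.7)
The strategy is to reduce modulo $\tilde{\fq}$ and exploit the fact that $\cO_F^* = \mathbb{Z}[\omega]^* = \langle \omega+1\rangle$ consists entirely of sixth roots of unity. By Lemma \ref{L1}, the norm $\N_{L/F}(\lambda)$ is congruent to $b^n$ modulo $\tilde{\fq}$ for a suitable lift $b \in \mathbb{Z}[\omega]$, and the hypothesis $\gcd(n,q^2-1)=1$ will then allow me to extract an ``$n$th root'' modulo $\tilde{\fq}$.

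Concretely, I would first invoke Lemma \ref{L1} to pick $b \in \mathbb{Z}[\omega]$ satisfying $\lambda \equiv b \pmod{\fq}$ and $\N_{L/F}(\lambda) \equiv b^n \pmod{\tilde{\fq}}$. Since $\lambda \in \cO_L^*$ is a global unit, $\N_{L/F}(\lambda)$ lies in $\cO_F^* = \langle \omega+1\rangle$, a group of order $6$. Hence, setting $\zeta := b \bmod \tilde{\fq} \in \mathbb{F}_{q^2}^*$, the first congruence gives $\zeta^{6n}=1$; combining with the trivial $\zeta^{q^2-1}=1$ shows that the order of $\zeta$ in $\mathbb{F}_{q^2}^*$ divides $\gcd(6n,q^2-1)$. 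Since $q\geq 5$ gives $6\mid q^2-1$, write $q^2-1=6m$; the hypothesis $\gcd(n,q^2-1)=1$ forces $\gcd(n,m)=1$, and hence $\gcd(6n,6m)=6\gcd(n,m)=6$. It follows that $\zeta^6=1$, so $\zeta$ is a sixth root of unity in $\mathbb{F}_{q^2}^*$.

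To wrap up, because $q\geq 5$ the natural map $\langle \omega+1\rangle \to \mathbb{F}_{q^2}^*$ is injective, so the six sixth roots of unity in $\mathbb{F}_{q^2}^*$ are precisely the reductions of $\{\pm 1,\pm\omega,\pm(\omega+1)\}$. Using the residue-degree identity $f(\fq/\tilde{\fq})=1$ (forced by $e(\fq/q)=n=[L:F]$), I identify $\cO_L/\fq$ with $\cO_F/\tilde{\fq}=\mathbb{F}_{q^2}$, so the image of $\lambda$ in $\cO_L/\fq$ equals $\zeta \in \langle \omega+1\rangle \bmod \tilde{\fq}$, as required. The main delicate step is the order-of-element argument showing $\gcd(6n,q^2-1)=6$; everything else follows mechanically from Lemma \ref{L1} and the known structure of $\cO_F^*$.
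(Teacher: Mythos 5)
Your proof is correct, and the overall strategy — reduce to the subfield $F$ via Lemma \ref{L1}, observe that $\N_{L/F}(\lambda)\in\cO_F^*=\langle\omega+1\rangle$, and exploit $\gcd(n,q^2-1)=1$ — is the same as the paper's. The difference lies in the last step. The paper applies B\'ezout directly: writing $un+v(q^2-1)=1$, it gets $b=(b^n)^u(b^{q^2-1})^v\equiv(\omega+1)^{iu}\pmod{\tilde{\fq}}$, which exhibits the residue of $b$ (hence of $\lambda$) as a power of $\omega+1$ with no extra work. You instead bound the multiplicative order of $\zeta:=b\bmod\tilde{\fq}$ by $\gcd(6n,q^2-1)=6$ and then match the six sixth roots of unity in $\mathbb{F}_{q^2}^*$ with the reductions of $\langle\omega+1\rangle$; this requires the extra observation that the reduction map $\langle\omega+1\rangle\to\mathbb{F}_{q^2}^*$ is injective (true since $q\geq5$ is coprime to $6$). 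Both routes are sound; the B\'ezout step is a touch leaner since it never needs injectivity of the reduction. One small slip: what yields $\zeta^{6n}=1$ is the \emph{norm} congruence $\N_{L/F}(\lambda)\equiv b^n\pmod{\tilde{\fq}}$ from Lemma \ref{L1}, not the congruence $\lambda\equiv b\pmod{\fq}$; you call it ``the first congruence,'' which is the wrong one of the two.
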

\begin{proof}
Let $b \in \mathbb{Z}[\omega]$ with $\lambda \equiv b \mod \fq$ as in Lemma \ref{L1}. This gives us $\N_{L/F}(\lambda)\equiv b^n \mod \tilde{\fq}$. However, as $\lambda$ is a unit, we must have $$\N_{L/F}(\lambda) \in \cO_F^*=\langle \omega+1 \rangle.$$ Putting these together we get that $b^n \equiv (\omega +1)^i \mod \tilde{\fq}$.
On the other hand, 
$b\in \cO_F$ and maps to a non-zero element of
$ \cO_F /\tilde{\fq}\cO_F\cong \mathbb{F}_{q^2}$ thus $b^{q^2-1}\equiv  1 \mod \tilde{\fq}$.
The assumption $\gcd(n, q^2-1) = 1$ is equivalent to the existence of integers
$u, v $ so that $un + v(q^2-1) = 1$. It follows that 
$$ b = (b^n)^u(b^{q^2-1})^v \equiv (\omega +1 )^{iu} \mod \tilde{\fq}.
$$
Thus, $(\lambda \mod \fq) \in \langle \omega+1 \rangle =  \{\pm 1, \pm (\omega+1), \pm \omega  \}$.
\end{proof}
\begin{proof}[Proof of Theorem \ref{3local}]
 
We will reduce the problem to a simpler one as described in Section \ref{pf3q}. More precisely, by using Theorem \ref{3inert} and then rewriting the equation into an $S_K$-unit equation, we get that it is enough to show that there are no solutions to \begin{equation}\label{uv}
u+v=1
\end{equation}
with $(u,v)\in \cO_{S_L}^* \times \cO_L^*$ such that $\fp^5|u$. We will prove the slightly stronger statement that there are no solutions to (\ref{uv}) such that $9|u$.\\
Note that by (\ref{uv}) it follows that $v \equiv 1 \mod 9$. Thus $\sigma(v) \equiv 1 \mod 9$ for all conjugates $\sigma(v)$ of $v$ in $\text{Gal}(\bar{L}/F)$, where $\bar{L}$ is the normal closure of $L$. Hence, $\N_{L/F}(v)\equiv 1 \mod 9$. As $v$ is a unit, we get
$\N_{L/F}(v) \in \cO_F^*=\langle \omega+1 \rangle.$ Thus, the only possibility is 
\begin{equation}\label{normm}
    \N_{L/F}(v)=1.
\end{equation}
By Lemma \ref{L2} applied with $\lambda=v$ we get that 
\begin{equation}
    (v\mod \fq) \in \langle \omega+1 \rangle =  \{\pm 1, \pm (\omega+1), \pm \omega  \}.
\end{equation}
If $v \equiv 1 \mod \fq$, then $u = 1-v \equiv 0 \mod \fq$, so $\fq | u$, but this is false as $u$ is an $S_L$-unit and $\fp$ and $\fq$ are different primes.\\
Thus $(v\mod \fq) \in \{- 1, \pm (\omega+1), \pm \omega  \}$. Then 
\begin{equation}\label{N}
    (\N_{L/F}(v) \mod \fq) \in \{ (-1)^n, (\pm(\omega+1))^n, (\pm \omega)^n \}.
\end{equation} 
Since $\gcd (n,q^2-1)=1$ and $q \geq 5$ is a prime, it follows in particular that $2\nmid n$ and $3\nmid n$. This observation along with (\ref{N}) proves that $\N_{L/F}(v) \mod \fq \not\equiv 1$, contradicting (\ref{normm}).

\end{proof}

\section{$S$-unit equations and computability} \label{sunits}
Finally, we will describe how to algorithmically check the hypothesises in our two main Theorems \ref{main1} and \ref{main2} by studying how to compute solutions of certain (linear) \textit{$S$-unit equations} over the number field $K$, i.e. equations of the form $$ax+by=1 \text{ where } a,b \in K^* \text{ with solutions } x,y \in \mathcal{O}_{S}^*.$$
Throughout this section 
$S$ denotes a finite set of prime ideals of $K$.

\begin{theorem} [Siegel]\label{Sunits}
Let $K$ be a number field and $S \subset \mathcal{O}_K$ a finite set of prime ideals, and let $a,b \in K^*$. Then, the equation $$ax+by=1$$ has only finitely many solutions in $\mathcal{O}_S^*.$ 
\end{theorem}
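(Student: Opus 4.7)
The strategy is a straightforward reduction to the classical $S$-unit equation in normalised form $u+v=1$, for which finiteness is a well-known consequence of Dirichlet's $S$-unit theorem combined with a Diophantine approximation input. First I would enlarge $S$ to
\[ S' := S \cup \{\mathfrak{q} : v_{\mathfrak{q}}(a) \neq 0 \text{ or } v_{\mathfrak{q}}(b) \neq 0\}, \]
which is still a finite set of primes of $K$. Setting $u := ax$ and $v := by$ gives $u+v=1$, and since $a,b \in \mathcal{O}_{S'}^*$ while $x,y \in \mathcal{O}_S^* \subseteq \mathcal{O}_{S'}^*$, the pair $(u,v)$ lies in $\mathcal{O}_{S'}^* \times \mathcal{O}_{S'}^*$. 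The map $(x,y) \mapsto (ax, by)$ is clearly injective, so it suffices to show that the normalised equation $u+v=1$ has only finitely many solutions $(u,v) \in \mathcal{O}_{S'}^* \times \mathcal{O}_{S'}^*$.

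To handle the normalised equation, I would invoke Dirichlet's $S$-unit theorem, which exhibits $\mathcal{O}_{S'}^*$ as a finitely generated abelian group of rank $r := r_1 + r_2 + |S'| - 1$, with some choice of fundamental $S'$-units $\eta_1, \dots, \eta_r$ and a generator $\zeta$ of the torsion subgroup. Every solution then has the shape $u = \zeta^a \prod_i \eta_i^{m_i}$, $v = \zeta^b \prod_i \eta_i^{n_i}$, so the problem is to bound the integer exponent tuples $(m_i), (n_i)$; once such a bound is established, only finitely many combinations remain, and one only has to filter those satisfying $u+v=1$.

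The main obstacle is precisely this bounding step, since no elementary argument is known to work: one needs a serious Diophantine approximation input. I would appeal to one of the two classical routes. The effective route is Baker's theorem on linear forms in archimedean and $\mathfrak{p}$-adic logarithms (in the latter case due to van der Poorten), applied after embedding $K$ into a suitable completion: the trivial identity $u + v = 1$ forces at least one of $u$ or $v$ to be close to a root of unity at some place, and Baker-type estimates then translate this closeness into an explicit upper bound on $\max(|m_i|,|n_i|)$. The ineffective but sharper alternative is Evertse's approach via the Schmidt subspace theorem, which bounds the \emph{number} of solutions in terms of $|S'|$ and $[K:\mathbb{Q}]$ alone. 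Either suffices for the stated qualitative finiteness; since the surrounding section emphasises computability, I would favour the Baker route, for which a detailed and algorithmic treatment is available in the Evertse--Győry monograph on $S$-unit equations.
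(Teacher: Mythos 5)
The paper does not actually supply a proof of this statement: it is quoted as a classical black-box theorem, attributed to Siegel with the reference \cite{S}, and then used directly in the proof of Theorem~\ref{equation}. Your reduction is correct and is the standard normalisation step: enlarging $S$ to $S'$ so that $a,b\in\mathcal{O}_{S'}^*$, setting $u=ax$, $v=by$, and noting that $(x,y)\mapsto(u,v)$ is injective, so it suffices to treat $u+v=1$ with $u,v\in\mathcal{O}_{S'}^*$. Your account of the remaining step is also the standard shape of the argument: Dirichlet's $S$-unit theorem exhibits $\mathcal{O}_{S'}^*$ as finitely generated of the rank you quote, after which a genuine Diophantine-approximation input is required to bound the exponent vectors. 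One small historical caveat: the paper's citation is to Siegel's 1929 paper, whose method was the (ineffective) Thue--Siegel approximation theorem, later extended to the $S$-integer setting by Mahler; Baker's effective linear forms in logarithms and Evertse's subspace-theorem count, which you propose, are later and stronger tools. This makes no difference to the qualitative finiteness asserted here, and your preference for the Baker route is consistent with the paper's emphasis on effective computability in Remark~\ref{sunit2}, so the proposal is sound even though there is no paper proof to compare against line by line.
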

\begin{remark} \label{sunit2}

Methods of effectively computing solutions to $S$-unit were pioneered by De Weger's famous thesis \cite{Weg} for $K=\mathbb{Q}$. His method of lattice approximation reduction algorithms was later generalized for all number fields by others, see for example Smart's \cite{Smart}. 
Moreover, an $S$-unit solver for $a=b=1$ has been implemented in the free open-source mathematics software, Sage by A. Alvarado, A. Koutsianas, B. Malmskog, C. Rasmussen, D. Roe, C. Vincent, M. West in \cite{AKMRVW}. 
\end{remark}
We will now study two non-linear equations involving $S$-units which are going to play a crucial role in checking the hypothesis of our Theorems \ref{main1} and \ref{main2}. 
Let $K$ be a number field and $S$ a finite set of prime ideals. Consider the equation
$$ \alpha + \beta = \gamma^i, \: \alpha, \beta \in \mathcal{O}_{S}^*,\:\: \gamma\in \mathcal{O}_{S}
.$$
There is a natural scaling action of $\cO_{S}^*$ on the solutions. We regard two solutions $(\alpha_1,\beta_1,\gamma_1) \sim_i (\alpha_2,\beta_2,\gamma_2)$ as equivalent if there is some $\epsilon \in \cO_{S}^*$ such that $\alpha_2=\epsilon^i \alpha_1$, $\beta_2=\epsilon^i \beta_1$ and $\gamma_2=\epsilon \gamma_1$. 

\begin{theorem}\label{equation}
Let $K$ be a number field and $S$ a finite set of prime ideals. Consider the equation
$$ \alpha + \beta = \gamma^i, \: \alpha, \beta \in \mathcal{O}_{S}^*,\:\: \gamma\in \mathcal{O}_{S}
.$$
For $i=2,3$, the equation has a finite number of solutions up to the equivalence relation $\sim_i$. Moreover, these are effectively computable.
\end{theorem}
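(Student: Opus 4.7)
The plan is to reduce Theorem \ref{equation} to an instance of the classical linear $S$-unit equation (Theorem \ref{Sunits}) over a suitable finite extension of $K$, in a way that preserves effective computability. The key input beyond Theorem \ref{Sunits} is the finiteness of $\mathcal{O}_S^*/(\mathcal{O}_S^*)^i$.

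First, I would invoke Dirichlet's $S$-unit theorem to note that $\mathcal{O}_S^*$ is finitely generated, so the quotient $\mathcal{O}_S^*/(\mathcal{O}_S^*)^i$ is finite for $i=2,3$. Picking representatives $\alpha_1,\ldots,\alpha_m$ of its cosets, any solution $(\alpha,\beta,\gamma)$ can be adjusted via the $\sim_i$-action so that $\alpha$ becomes one of the fixed $\alpha_k$: if $\alpha = \alpha_k\epsilon^i$ then $(\alpha,\beta,\gamma) \sim_i (\alpha_k, \beta\epsilon^{-i}, \gamma\epsilon^{-1})$. It therefore suffices to show, for each fixed $\alpha_0 \in \{\alpha_1,\ldots,\alpha_m\}$, that the set of $(\beta,\gamma) \in \mathcal{O}_S^* \times \mathcal{O}_S$ with $\alpha_0 + \beta = \gamma^i$ is finite and effectively listable.

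For each such $\alpha_0$, I would pass to the finite extension $L := K(\sqrt[i]{\alpha_0},\zeta_i)$ (of degree at most $6$ over $K$) and let $S_L$ denote the primes of $L$ above $S$. Over $L$ the relation $\gamma^i - \alpha_0 = \beta$ factors as
\[
\beta \;=\; \prod_{j=0}^{i-1}\bigl(\gamma - \zeta_i^{j}\sqrt[i]{\alpha_0}\bigr),
\]
and since this is an $S_L$-unit expressed as a product of $S_L$-integers, each factor is itself an $S_L$-unit by unique factorisation of ideals. The crucial observation is that the difference of any two such factors is a nonzero constant independent of $\gamma$, for instance
\[
\bigl(\gamma - \sqrt[i]{\alpha_0}\bigr) - \bigl(\gamma - \zeta_i\sqrt[i]{\alpha_0}\bigr) \;=\; (\zeta_i - 1)\sqrt[i]{\alpha_0}.
\]
Dividing through by this constant converts the relation into a linear $S_L'$-unit equation $x+y=1$, after enlarging $S_L$ to $S_L'$ by the finitely many primes dividing $(\zeta_i-1)\sqrt[i]{\alpha_0}$. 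By Theorem \ref{Sunits} only finitely many $(x,y) \in \mathcal{O}_{S_L'}^* \times \mathcal{O}_{S_L'}^*$ solve it, and by Remark \ref{sunit2} these are effectively computable.

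From each such $(x,y)$ I would reconstruct $\gamma = x + \sqrt[i]{\alpha_0}$ and then $\beta = \gamma^i - \alpha_0$, retaining only those for which $\gamma \in \mathcal{O}_S \subseteq K$ — a decidable condition. Iterating over the finite list of $\alpha_0$ and the resulting $S_L'$-unit solutions yields a complete finite list of $\sim_i$-representatives. I expect the only delicate point to be tracking effectivity through the passage to $L$; this is unproblematic because $L$, its ring of integers, and the enlarged set $S_L'$ are all computable from $K$ and $\alpha_0$, and the unconditional $S$-unit solvers cited in Remark \ref{sunit2} then terminate in finite time.
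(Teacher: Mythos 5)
Your proposal is correct and follows essentially the same route as the paper: reduce modulo $i$-th powers (fixing one of the two unit variables using finiteness of $\mathcal{O}_S^*/(\mathcal{O}_S^*)^i$), factor $\gamma^i - (\text{fixed unit})$ over the extension $K(\sqrt[i]{\cdot},\zeta_i)$, observe each factor is an $S'$-unit whose pairwise differences are fixed constants, and conclude via the classical $S$-unit theorem. The only cosmetic differences are that the paper fixes $\beta$ where you fix $\alpha$ (irrelevant by symmetry), and the paper applies the $S$-unit theorem directly with nontrivial coefficients $a,b$ rather than enlarging $S'$ to absorb them; your final filtering step for $\gamma \in \mathcal{O}_S$ is a small but worthwhile clarification the paper leaves implicit.
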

\begin{proof}
Let $i=2$ and $(\alpha,\beta,\gamma) \in \cO^*_S \times \cO^*_S \times \cO_S$ a solution to $\alpha+\beta=\gamma^2$. By Dirichlet Unit Theorem $\cO^*_S$ is finitely generated, and hence $\cO^*_S/(\cO^*_S)^2$ is finite. Fix a set of representatives $\beta_1, \beta_2,..., \beta_l$. We may scale our solution so that $\beta \in \{ \beta_1, \beta_2, ..., \beta_l \}$. Thus, there are finitely many choices of $\beta$ (up to $\sim_2$ equivalence) and we fix one of them. We next show that for each such choice of $\beta$, there is a finite number of distinct $\alpha$, and thus, a finite number of triples $(\alpha, \beta,\gamma)$ up to $\sim_2$ equivalence.

We rewrite the equation as 
\begin{equation}\label{XYSunits}
    (\gamma+\sqrt{\beta})(\gamma-{\sqrt{\beta}})= \alpha \: \text{over } L
    .
    \end{equation}
where $L:=K(\sqrt{\beta})$. Denote by $ x:=\gamma+\sqrt{\beta}, y:=\gamma-\sqrt{\beta}
$ and consider $S':= \{ \fP_L \text{ prime of }L: \fP_L|\fP_K, \text{ for some } \fP_K \in S \}.$ 
We claim that $x,y$ are both $S'$-units in $L$. This follows by considering the valuation of the product in (\ref{XYSunits}) at the primes of $L$ outside the set $S'$. Then, we use the definition of $S'$ and the fact that $\alpha$ is an $S$-unit in $K$. Notice that
$$\frac{1}{2\sqrt{\beta}}x-\frac{1}{2\sqrt{\beta}}y=1.$$ 
By Theorem \ref{Sunits}, we get finitely many $S'$-unit solutions $x,y$, and thus finitely many possibilities for $\alpha = xy$. Moreover, these are computable by Remark \ref{sunit2}.
 
For $i=3$, the argument works in a similar manner. Fixing a representative $\beta$ of the finite quotient $\cO^*_S/(\cO^*_S)^3$, we rewrite the equation as
\begin{equation} \label{XY2}
    (\gamma-\sqrt[3]{\beta})(\gamma-\omega{\sqrt[3]{\beta}})(\gamma-\omega^2\sqrt[3]{\beta})= \alpha \: \text{over } L
    \end{equation}
where $L=K(\omega, \sqrt[3]{\beta})$ and $\beta \neq -1$. Denote by $x:= \gamma-\sqrt[3]{\beta}, y:=\gamma-\omega{\sqrt[3]{\beta}},$
$S':= \{ \fP_L \text{ prime of }L: \fP_L|\fP_K, \text{for some } \fP_K \in S \} .$ 

We make the quick note that for $\beta=-1$ we take $x:=\gamma+1$, $y=\gamma+\omega$, $L:=K(\omega)$ and the rest of the argument follows the same, so it is omitted.

As in the previous case, by examining the product in (\ref{XY2}) we get that $x,y$ are both $S'$-units in $L$ and 
$$ \frac{1}{(\omega-1)\sqrt[3]{\beta}}x-
\frac{1}{(\omega-1)\sqrt[3]{\beta}}y=1.
$$
Thus, by Theorem \ref{Sunits}, Remark \ref{sunit2} and the observation that $\alpha = xy(y-\omega(\omega-1)\sqrt[3]{\beta}) $, giving finely many numbers $\alpha$ for a fixed $\beta$ and so we conclude the proof.

\end{proof}
\begin{remark}
In the hypotheses of Theorems \ref{main1} and \ref{main2} one needs to examine the local behaviour of $\frac{\alpha}{\beta}$ which, by the above theorem, can only take a finite, computable number of values.
\end{remark}

\end{document}